\definecolor{andresblue}{rgb}{0,0.72,0.92}
\definecolor{andrespink}{rgb}{1,0,1}
\definecolor{munsell}{rgb}{0.0, 0.5, 0.69}
\newcommand{\andres}[1]{\todo[size=\tiny,inline,color=munsell!50]{#1 \\ \hfill --- Andr\'es}}
\definecolor{suprise}{rgb}{1, 0.0, 0.69}
\newcommand{\ben}[1]{\todo[size=\tiny,inline,color=suprise!50]{#1 \\ \hfill --- Ben}}
\definecolor{kojigreen}{rgb}{0, 1, 0}
\newcommand{\kk}{\Bbbk}
\newtheorem*{rep@theorem}{\rep@title}\newcommand{\newreptheorem}[2]{%
\newenvironment{rep#1}[1]{%
\def\rep@title{\bf #2 \ref{##1}}%
\begin{rep@theorem}}%
{\end{rep@theorem}}}
\newtheorem{theorem}{Theorem}[section]
\newtheorem{proposition}[theorem]{Proposition}
\newtheorem{conjecture}[theorem]{Conjecture}
\newtheorem{lemma}[theorem]{Lemma}
\newtheorem{corollary}[theorem]{Corollary}
\theoremstyle{definition}
\newtheorem{remark}[theorem]{Remark}
\newtheorem{definition}[theorem]{Definition}
\newtheorem{example}[theorem]{Example}
\newtheorem{question}[theorem]{Question}
\renewcommand{\emptyset}{\varnothing}
\DeclareMathOperator{\conv}{conv}
\newcommand{\R}{\mathbb{R}}
\newcommand{\Z}{\mathbb{Z}}
\newcommand{\bC}{\mathbb C}
\newcommand{\ZZ}{\mathbb{Z}}
\begin{document}

\title{Matching polytopes, Gorensteinness, and the integer decomposition property}

\author{Benjamin Eisley}
\address{\scriptsize{Department of Mathematics, UC Berkeley}}
\email{\scriptsize{beisley2025@berkeley.edu}}

\author{Koji Matsushita}
\address{\scriptsize{Department of Pure and Applied Mathematics, Graduate School of Information Science and Technology, Osaka University}}
\email{\scriptsize{k-matsushita@ist.osaka-u.ac.jp}}

\author{Andr\'es R. Vindas-Mel\'endez}
\address{\scriptsize{Department of Mathematics, Harvey Mudd College}, \url{https://math.hmc.edu/arvm}}
\email{\scriptsize{avindasmelendez@g.hmc.edu}}


\begin{abstract}
The matching polytope of a graph $G$ is the convex hull of the indicator vectors of the matchings on $G$.
We characterize the graphs whose associated matching polytopes are Gorenstein, and then prove that all Gorenstein matching polytopes possess the integer decomposition property.
As a special case study, we examine the matching polytopes of wheel graphs and show that they are \emph{not} Gorenstein, but \emph{do} possess the integer decomposition property.
\end{abstract}

\maketitle

\section{Introduction}
A lattice polytope $P$ (i.e., a convex polytope with vertices in $\Z^d$) is \emph{Gorenstein of index} $k \in \Z_{>0}$ if $kP$ is a reflexive polytope (i.e., its dual is also a lattice polytope). 
Gorenstein polytopes have long enticed mathematicians working at the intersection of combinatorics, discrete geometry, and commutative algebra, as they can be studied through the lenses of each of the aforementioned areas.
For example, one might deviate from the geometric definition of Gorenstein above, and instead adopt a commutative-algebraic perspective by studying this property through the Ehrhart ring of $P$. 
The Ehrhart ring of $P$ is the monoid algebra (a special type of graded ring) generated by the monoid of lattice points in the cone over $P$ \cite{BrunsGubeladze}; this a special toric ring.
The Ehrhart ring is Gorenstein if it is a is a Cohen-Macaulay ring that has a dualizing module isomorphic to a shift of itself (see \cite{BrunsHerzog,BrunsGubeladze} for more background).
We say that $P$ is Gorenstein if its Ehrhart ring is Gorenstein. 

One motivation for our work comes from Ehrhart theory, i.e., the study of lattice points in polytopes.
For a lattice polytope $P$, we define the \emph{Ehrhart series} of $P$ to be the generating function 
\[
  \operatorname{Ehr}(P;z):=\sum_{t\in  {\Z_>0}}|tP \cap \Z^n| \, z^t=\frac{h^*(P;z)}{(1-z)^{\operatorname{dim}(P)+1}} \, .
\]
The polynomial $h^*(P;z)=\sum_{i=0}^n h^*_iz^i$ is known as the \emph{$h^*$-polynomial} of~$P$ and is of degree less than $n+1$ and has nonnegative integer coefficients. 
For more on Ehrhart theoretic background, we recommend the following references: \cite{BeckRobins, BeckSanyal, Braun, Ehrhart}.

A lattice polytope $P$ is said to exhibit the \emph{integer decomposition property} (\emph{IDP}, for short) if for any $t \in \Z_{>0}$ and $\alpha \in tP \cap \Z^d$, there exist $\alpha_1, \ldots, \alpha_t \in P \cap \Z^d$ with $\alpha = \alpha_1 + \cdots + \alpha_t$.    
In \cite{OhsugiHibi}, Hibi an Ohsugi conjectured the following result which was proven by Adiprasito, Papadakis, Petrotou, and Steinmeyer in \cite{AdiprasitoPapadakisPetrotouSteinmeyer}:
\begin{theorem}[\cite{AdiprasitoPapadakisPetrotouSteinmeyer}]\label{AdiprasitoPapadakisPetrotouSteinmeyer}
        If $P$ is Gorenstein and has the IDP, then $h^*(P;z)$ is unimodal.     
\end{theorem}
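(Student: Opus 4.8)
The plan is to prove the statement by passing to the Ehrhart ring of $P$ and establishing a Lefschetz-type property there. Because $P$ has the IDP, the affine semigroup $S_P=\{(x,t)\in\Z^d\times\Z_{\geq 0} : x\in tP\}$ is generated by its degree-one part $(P\cap\Z^d)\times\{1\}$, so the Ehrhart ring $R=\kk[S_P]$ is a \emph{standard graded} $\kk$-algebra whose Hilbert-series numerator is exactly $h^*(P;z)$. The ring $R$ is a domain of Krull dimension $d+1$, is Cohen--Macaulay (Hochster's theorem for normal affine semigroup rings), and is Gorenstein by hypothesis. Choosing over an infinite field a linear system of parameters $\theta_0,\dots,\theta_d$ --- a regular sequence by Cohen--Macaulayness --- yields an Artinian Gorenstein standard graded algebra $A=R/(\theta_0,\dots,\theta_d)$ with $\dim_\kk A_i=h^*_i$ and socle concentrated in degree $s:=\deg h^*(P;z)$. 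Gorensteinness forces the Hilbert function to be symmetric, $h^*_i=h^*_{s-i}$, so unimodality of $h^*$ reduces to a Lefschetz statement for $A$: it suffices to produce $\ell\in A_1$ with multiplication $\ell\colon A_i\to A_{i+1}$ injective for all $i<s/2$ (weak Lefschetz), and cleanest is the hard Lefschetz property that $\ell^{\,s-2i}\colon A_i\to A_{s-i}$ is bijective for every $i$. For $P$ admitting a regular unimodular triangulation this was already accessible via the $g$-theorem (Bruns--R\"omer), but in general a new idea is required, since Gorenstein standard graded algebras can have non-unimodal $h$-vector (Stanley) --- the proof must genuinely use that $R$ comes from a lattice polytope.

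To obtain hard Lefschetz I would follow the anisotropy method of Adiprasito and of Adiprasito--Papadakis--Petrotou--Steinmeyer. Work over a field $\kk$ of positive characteristic $p$ (the case $p=2$ being the prototype). For a generic linear form $\ell$ and each $i\leq s/2$, consider the symmetric bilinear ``Lefschetz pairing'' $q_{\ell,i}\colon A_i\times A_i\to A_s\cong\kk$ sending $(a,b)$ to the degree-$s$ component of $\ell^{\,s-2i}ab$. Hard Lefschetz in degree $i$ is equivalent to nondegeneracy of $q_{\ell,i}$, and since any anisotropic quadratic form is automatically nondegenerate, it suffices to show $q_{\ell,i}$ is \emph{anisotropic}: $q_{\ell,i}(a,a)\neq 0$ for every $0\neq a\in A_i$. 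The characteristic-$p$ hypothesis is exactly what makes anisotropy attackable, since $q_{\ell,i}(a,a)$ is the top degree part of $\ell^{\,s-2i}a^2$ and squaring is (a piece of) the Frobenius, which is compatible with the monomial/semigroup structure of $R$ and of the Stanley--Reisner rings appearing below.

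Anisotropy is then proven by induction on $\dim P$. One models $A$, or a ring carrying the same Lefschetz pairing, by the Stanley--Reisner ring of a regular (``pulling'') triangulation of the boundary of the cone over $P$, so that restricting to a generic hyperplane section corresponds geometrically to slicing $P$ by a rational hyperplane; this produces lower-dimensional Gorenstein/IDP pieces, to which the inductive hypothesis applies, together with ``boundary'' contributions governed by biased Poincar\'e pairings on the relevant codimension-one cycles. The Frobenius/squaring trick is what transports the argument: a nonzero isotropic element upstairs would, after squaring and multiplying by an appropriate power of $\ell$, yield a nonzero isotropic element in one of the smaller rings, contradicting induction. Assembling anisotropy over all $i$ gives hard Lefschetz for $A$, hence $h^*_0\le h^*_1\le\cdots\le h^*_{\lfloor s/2\rfloor}$, and by the symmetry $h^*_i=h^*_{s-i}$ this is unimodality of $h^*(P;z)$.

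The heart of the argument --- and the only genuinely hard part --- is the anisotropy/biased-pairing induction: choosing the right characteristic, building the hyperplane-cutting scheme and the biased pairings on the codimension-one cycles, and controlling how the Frobenius moves isotropic vectors down the induction. By contrast, the passage from $P$ to the Artinian Gorenstein algebra $A$ and the deduction of unimodality from (hard or even weak) Lefschetz are routine.
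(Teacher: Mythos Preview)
The paper does not prove this theorem at all: it is quoted from the literature as a result of Adiprasito, Papadakis, Petrotou, and Steinmeyer \cite{AdiprasitoPapadakisPetrotouSteinmeyer}, and the paper simply invokes it (in the introduction and again in the conclusion) without supplying any argument. So there is no ``paper's own proof'' to compare your proposal against.

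That said, your outline is a reasonable high-level summary of the strategy in \cite{AdiprasitoPapadakisPetrotouSteinmeyer}: pass to the standard graded Ehrhart ring using IDP, reduce modulo a generic linear system of parameters to an Artinian Gorenstein algebra whose Hilbert function is the $h^*$-vector, and then establish a Lefschetz property via the characteristic-$p$ anisotropy method. Your sketch correctly identifies where the real work lies (the anisotropy induction) and why something beyond classical methods is needed (Stanley's non-unimodal Gorenstein examples). As a proof it is, of course, only a roadmap: the actual anisotropy argument, the construction of the biased pairings, and the precise inductive setup are substantial and cannot be reconstructed from this level of description alone. But since the present paper treats the result as a black box, nothing more is required here.
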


Relaxing the conjecture (now theorem) slightly, Schepers and Van Langenhoven in \cite{SchepersVanLangenhoven}, asked the following:

\begin{question}
    If $P$ has the IDP, is it true that $h^*(P;z)$ is unimodal? 
\end{question}
\noindent As far as the authors know, this question still remains unanswered in full generality.

There are many well-investigated classes of toric rings, including: 
\begin{itemize}
\item \emph{Hibi rings}, i.e. the toric rings of order polytopes (see, for example, \cite{DasMukherjee, Ene, EneHerzogSaeediMadani, Hibi, HigashitaniMatsushita, HigashitaniNakajima, Miyazaki}).

\item Edge rings, i.e. the toric rings of edge polytopes (see, for example, \cite{HamanoHibiOhsugi, HigashitaniMatsushita, OhsugiHibi2, SimisVasconcelosVillarreal}).

\item Stable set rings, i.e. the toric rings of stable-set polytopes: of those polytopes whose vertices are the characteristic vectors of stable (independent) sets of a graph (see, for example, \cite{chvatal1975stable, HigashitaniMatsushita}). 

\end{itemize}

In this work, we study a special class of stable set polytopes known as \emph{matching polytopes}, whose vertices are the indicator vectors of  the matchings on a graph.
The outline and main results of this paper are as follows:
\begin{itemize}
    \item In Section \ref{sec:prelims}, we introduce preliminaries and background on lattice polytopes, Ehrhart rings, and graph theory.

    \item In Section \ref{sec:gorenstein}, we characterize the graphs whose associated matching polytopes are Gorenstein; this is the subject of Theorem \ref{thm:gor}.
    We then show Gorenstein matching polytopes have the IDP.

    \item Finally, in Section \ref{sec:wheel_graphs_IDP}, we study the matching polytopes associated to wheel graphs.
    We use tools developed in Section \ref{sec:gorenstein} to show that these matching polytopes are \emph{not} Gorenstein, but \emph{do} have the IDP.
    We conclude by presenting some data on the $h^*$-vector of the matching polytopes of wheel graphs, which appears to unimodal.
\end{itemize}


\section{Preliminaries}\label{sec:prelims}

\subsection{Ehrhart rings}

A \emph{lattice polytope} $P \subset \R^d$ is the convex hull of finitely many integer lattice points in $\Z^d$.

\begin{definition}
    The \emph{Ehrhart ring} $A(P)$ of $P$ is the finitely generated graded $k$-algebra:
    \begin{align*}
        &A(P):=\kk[z_1^{a_1}\cdots z_d^{a_d} z_{d+1}^{\eta} \mid \eta \in \Z_{>0}, \; (a_1,\ldots,a_d) \in \eta P\cap \Z^d], \\
        &\deg(z_1^{a_1}\cdots z_d^{a_d} z_{d+1}^{\eta}):=\eta.
    \end{align*}
\end{definition}

\begin{example}\label{kneqA}
    Let $P=\conv(\{(0,0,0),(1,1,0),(1,0,1),(0,1,1)\})$.
    Then:
    \begin{equation*}
        A(P)=\kk[\; z_4, \;z_1 z_2 z_4,\; z_1 z_3 z_4,\; z_2 z_3 z_4, \; z_1 z_2 z_3 z_4^2].
    \end{equation*}
    The monomial $z_1 z_2 z_3 z_4^2$ corresponds to the lattice point $(1,1,1)\in (2P\cap \Z^3)$.
    Since $z_1 z_2 z_3 z_4^2$ is independent of the other generators, in this example, $A(P)$ is not standard graded.
\end{example}

The Ehrhart ring $A(P)$ is standard graded if and only if $P$ has the \emph{integer decomposition property}:
\begin{definition}
    A lattice polytope $P$ has the \emph{integer decomposition property} (\emph{IDP}, for short) if for any $t \in \Z_{>0}$ and $\alpha \in tP \cap \Z^d$, there exist $\alpha_1, \ldots, \alpha_t \in P \cap \Z^d$ with $\alpha = \alpha_1 + \cdots + \alpha_t$.    
\end{definition}

Similarly, the Ehrhart ring $A(P)$ is Gorenstein if and only if some translated dilate of $P$ is \emph{reflexive}.
\begin{definition}
    Let $P \subset \R^d$ be a polytope.
    Its \emph{(polar) dual} is:
    \begin{equation*}
        P^* := \left\{y \in \R^d : \langle x, y\rangle \ge -1 \text{ for all } x \in P \right\}.
    \end{equation*}
    We call $P$ \emph{reflexive} if both $P$ and $P^*$ are lattice polytopes.
\end{definition}

\begin{theorem}[{\cite[Theorem~1.1]{DeNegriHibi}}]\label{refl}
    The following are equivalent:
    \begin{itemize}
        \item[(i)] The Ehrhart ring $A(P)$ of a lattice polytope $P$ is Gorenstein.
        \item[(ii)] There exist $k \in \Z_{>0}$ and $\alpha \in \Z^d$ such that $k P - \alpha$ is reflexive.
        \item[(iii)] There exist $k \in \Z_{>0}$ and $\alpha \in k P \cap \Z^d$ such that $\alpha$ has height $1$ over each facet of $P$.
    \end{itemize}
\end{theorem}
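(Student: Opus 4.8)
The plan is to realize $A(P)$ as a normal affine semigroup ring, invoke Stanley's description of its canonical module, and translate the principality of that module into the reflexivity condition; the equivalence (ii) $\Leftrightarrow$ (iii) is then a matter of unwinding the definition of reflexive. We may assume $\dim P = d$ (otherwise pass to the affine hull of $P$ and the induced sublattice), and we write $P = \{x \in \R^d : \langle b_i, x\rangle \ge c_i,\ 1 \le i \le m\}$, where each $b_i \in \Z^d$ is the primitive inner normal of a facet and $c_i \in \Z$ (possible since $P$ is a lattice polytope). Set $C := \operatorname{cone}(\{(x,1) : x \in P\}) \subseteq \R^{d+1}$, so $A(P) = \kk[S]$ with $S = C \cap \Z^{d+1}$, graded by the last coordinate; this is a normal affine semigroup ring. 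Homogenizing the facet inequalities gives $C = \{(a,\eta) : \langle b_i, a\rangle \ge c_i \eta,\ 1 \le i \le m\}$, each $w_i := (b_i, -c_i)$ is primitive (its entries have gcd dividing that of $b_i$), and the $w_i$ are exactly the primitive inner facet normals of $C$. Hence a lattice point $(a,\eta)$ lies in $\operatorname{int}(C)$ iff $\langle w_i, (a,\eta)\rangle \ge 1$ for every $i$, and statement (iii) reads: there is $(\alpha,k) \in \operatorname{int}(C) \cap \Z^{d+1}$ with $\langle w_i, (\alpha,k)\rangle = 1$ for all $i$.

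For (i) $\Leftrightarrow$ (iii), recall that the canonical module of the normal semigroup ring $\kk[S]$ is, as an ideal of $\kk[S]$, spanned over $\kk$ by the monomials $t^v$ with $v \in \operatorname{int}(C) \cap \Z^{d+1}$ (Stanley; see also \cite{BrunsHerzog}). So $A(P)$ is Gorenstein iff this module is principal, iff there is $\omega_0 = (\alpha,k) \in \operatorname{int}(C) \cap \Z^{d+1}$ with $\operatorname{int}(C) \cap \Z^{d+1} = \omega_0 + S$. If (iii) holds, then for any interior lattice point $(a,\eta)$ and each $i$ one has $\langle w_i, (a,\eta) - \omega_0\rangle = \langle w_i,(a,\eta)\rangle - 1 \ge 0$, so $(a,\eta) - \omega_0 \in C \cap \Z^{d+1} = S$, while $\omega_0 + S \subseteq \operatorname{int}(C)$ since $\langle w_i, \omega_0 + s\rangle = 1 + \langle w_i, s\rangle \ge 1$; thus $\operatorname{int}(C) \cap \Z^{d+1} = \omega_0 + S$ and (i) follows. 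Conversely, assume $A(P)$ is Gorenstein with canonical generator $\omega_0$. The key point is that over each facet $F_i$ of $C$ there is an interior lattice point $v_i$ with $\langle w_i, v_i\rangle = 1$: take a lattice point $q_i$ in the relative interior of $F_i$ (for instance the sum of primitive generators of the extreme rays of $F_i$) and a lattice point $u_i \in \Z^{d+1}$ with $\langle w_i, u_i\rangle = 1$ (it exists since $w_i$ is primitive), and put $v_i := N q_i + u_i$; for $N \gg 0$ we have $v_i \in \operatorname{int}(C) \cap \Z^{d+1}$ and $\langle w_i, v_i\rangle = 1$. Writing $v_i = \omega_0 + s_i$ with $s_i \in S$ gives $1 = \langle w_i, \omega_0\rangle + \langle w_i, s_i\rangle \ge \langle w_i, \omega_0\rangle \ge 1$, so $\langle w_i, \omega_0\rangle = 1$ for all $i$, which is (iii).

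For (ii) $\Leftrightarrow$ (iii) we use that a lattice polytope $Q$ with $0 \in \operatorname{int}(Q)$ is reflexive iff $Q = \{y : \langle u_i, y\rangle \ge -1\}$ for primitive integer vectors $u_i$ (equivalently, its facets are at lattice distance $1$ from $0$); indeed in that case $Q^* = \conv\{-u_1, \dots, -u_r\}$ is a lattice polytope, and the converse follows because the distances must be positive integers. If (iii) holds for $(\alpha,k)$, then translating the facet description of $kP$ shows $Q := kP - \alpha = \{y : \langle b_i, y\rangle \ge -1,\ 1 \le i \le m\}$ is a lattice polytope with $0 \in \operatorname{int}(Q)$, hence reflexive. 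Conversely, if $kP - \alpha$ is reflexive (with $\alpha \in \Z^d$), its facets — whose primitive normals are the $b_i$ — have the form $\langle b_i, y\rangle \ge -1$, so those of $kP$ are $\langle b_i, x\rangle \ge \langle b_i, \alpha\rangle - 1$; thus $\alpha \in kP \cap \Z^d$ and $\langle w_i, (\alpha,k)\rangle = 1$ for all $i$, which is (iii).

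I expect the main obstacle to be the sub-lemma in the second paragraph — producing, over each facet of $C$, an interior lattice point at lattice height exactly $1$ — together with the foundational bookkeeping that underpins everything: checking that $C$ is cut out by precisely the homogenized facet inequalities of $P$ (so no spurious constraint $\eta \ge 0$ is required) and that $S$ generates $\Z^{d+1}$ as a group, which is what makes "primitive normal / lattice height $1$" mean the same thing on the polytope side and the cone side. By contrast, the reflexivity computation of the third paragraph and the appeal to Stanley's canonical-module formula are routine, and the statement may in any event simply be quoted from \cite{DeNegriHibi}.
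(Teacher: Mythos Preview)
The paper does not prove this theorem at all: it is quoted verbatim as \cite[Theorem~1.1]{DeNegriHibi} and used as a black box (see the proof of Lemma~\ref{lemma:gor}, which simply invokes ``By Theorem~\ref{refl}''). So there is no ``paper's own proof'' to compare against, and indeed you yourself anticipate this in your final sentence.

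That said, your sketch is the standard route and is correct. Realizing $A(P)$ as $\kk[C \cap \Z^{d+1}]$, invoking Stanley's description of the canonical module as the ideal of interior lattice points, and then translating principality into the height-$1$ condition over every facet is exactly how the result is proved in the literature (this is essentially the argument behind \cite[Theorem~1.1]{DeNegriHibi}; see also \cite[Theorem~6.33]{BrunsGubeladze}). The sub-lemma you flag --- producing an interior lattice point at height $1$ over a prescribed facet via $Nq_i + u_i$ --- is the right idea and works; the bookkeeping about $\eta \ge 0$ being redundant and about the group generated by $S$ is genuine but routine for a full-dimensional lattice polytope. In short: nothing to fix, and nothing to compare --- the paper treats this as background.
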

    
A polytope $P$ is \emph{Gorenstein of index} $k$ if $k P +\alpha$ is a reflexive polytope for some $\alpha \in \Z^d$.


\subsection{Graph theory} 

Let $G$ be a graph.
We call the vertex set $V(G)$ and the edge set $E(G)$.
For $U \subseteq V(G)$, we use $G[U]$ to denote the subgraph of $G$ induced by $U$.
Let $u \in V(G)$.
We adopt the following shorthand:
\begin{enumerate}
    \item $|G| := |V(G)|$,
    \item $V := V(G)$, $E: = E(G)$,
    \item $V(U): = V(G[W])$, $E(U): = E(G[U])$,
    \item $G \setminus u := G[V(G) \setminus \{ u \}]$.
\end{enumerate}

We say $G$ is \emph{disconnected} if $V$ can be partitioned into $U$ and $W$ such that no edges of $G$ link the two partitions.
Otherwise, we say $G$ is \emph{connected}.
If $G$ is connected, but $G \setminus u$ is disconnected for some $u \in V$, we call $u$ a \emph{cut vertex} of $G$.
We say that $G$ is \emph{$2$-connected} if $G$ has no cut vertices.
A \emph{block} of $G$ is a maximal $2$-connected component.
The following characterizes $2$-connected graphs.

\begin{theorem}[{ \cite[Proposition~3.1.1]{Diestel}}]\label{thm_factor}
\;
A graph $G$ is $2$-connected if and only if $G$ has an open ear decomposition, i.e., $G$ can be decomposed as \[C\cup \mathtt{P}_1\cup\cdots\cup \mathtt{P}_r,\] where $C$ is a cycle, $\mathtt{P}_i$ is a path and \[(C\cup \mathtt{P}_1 \cup \cdots \cup \mathtt{P}_{i-1})\cap \mathtt{P}_i\] consists of just two vertices for each $i$.
\end{theorem}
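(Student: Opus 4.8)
The plan is to prove the two implications separately; the reverse implication is a short induction, and essentially all the content lies in the forward direction.

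\emph{Open ear decomposition $\Rightarrow$ $2$-connected.} I would induct on the number $r$ of ears. The base case $r=0$ is a cycle, which is $2$-connected. For the inductive step, write $H := C\cup \mathtt{P}_1\cup\cdots\cup \mathtt{P}_{r-1}$, assumed $2$-connected, and $G = H\cup \mathtt{P}_r$, where $\mathtt{P}_r$ is a path meeting $H$ in exactly its two distinct endpoints $x,y$. For an arbitrary $v\in V(G)$ I would check that $G\setminus v$ is connected: if $v$ is internal to $\mathtt{P}_r$, then $G\setminus v$ is $H$ together with the two sub-paths of $\mathtt{P}_r$, each still attached to $H$; if $v\in V(H)$, then $H\setminus v$ is connected by hypothesis and $\mathtt{P}_r$ (with $v$ deleted, when $v\in\{x,y\}$) is a path still attached to $H\setminus v$. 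In either case $v$ is not a cut vertex, so $G$ is $2$-connected.

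\emph{$2$-connected $\Rightarrow$ open ear decomposition.} A $2$-connected graph has minimum degree at least $2$ (and at least three vertices), hence contains a cycle $C$; I would take $H_0:=C$, with its trivial ear decomposition. Inductively, suppose $H_i\subsetneq G$ has been constructed with an open ear decomposition and $C\subseteq H_i$. If $V(H_i)=V(G)$, pick any $e=xy\in E(G)\setminus E(H_i)$; this single edge is an open ear (its intersection with $H_i$ being exactly $\{x,y\}$), and I set $H_{i+1}:=H_i\cup e$. If instead $V(H_i)\ne V(G)$, use connectedness of $G$ to pick an edge $e=xy$ with $x\in V(H_i)$ and $y\notin V(H_i)$; since $G$ is $2$-connected, $G\setminus x$ is connected, so there is a path $Q$ in $G\setminus x$ from $y$ into the nonempty set $V(H_i)\setminus\{x\}$, and I may truncate $Q$ so that it meets $V(H_i)$ only at its final vertex $z\ne x$. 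Then $\mathtt{P}_{i+1}:=e\cup Q$ is a path from $x$ to $z$ whose interior is disjoint from $V(H_i)$, i.e., an open ear, and $H_{i+1}:=H_i\cup \mathtt{P}_{i+1}$ inherits an open ear decomposition. Every step adds an edge, so the process terminates, necessarily at $H_i=G$.

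I expect the main obstacle to be the inductive step of the forward direction — producing the next ear — and within it the point where $2$-connectivity is genuinely used: after choosing the anchor edge $e=xy$, one needs a path from $y$ back to $H_i$ that avoids $x$, so that its far endpoint $z$ differs from $x$ and the concatenation truly closes up into an ear; this is exactly what connectedness of $G\setminus x$ provides. A secondary bookkeeping matter is the terminal regime $V(H_i)=V(G)$, $E(H_i)\ne E(G)$, in which the remaining ears are single edges — one should record that such an edge counts as a legitimate open ear under the convention of the statement, its intersection with $H_i$ consisting of precisely its two endpoints.
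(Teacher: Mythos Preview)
The paper does not prove this statement; it is quoted from Diestel's textbook and used as a black box (in the proof of Theorem~\ref{thm:gor}). Your argument is correct and is essentially the classical proof found in Diestel: induction on the number of ears for the reverse direction, and a greedy ear-extension using connectedness of $G\setminus x$ for the forward direction. There is nothing to compare against in the paper itself.
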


We say that a graph $G$ is \emph{almost bipartite} if there exists a vertex $u \in G$ such that $G \setminus u$ is bipartite.

Some graphs that are of relevance to this work include: 
\begin{itemize}
    \item the path on $n$ vertices $\mathtt{P}(n)$. Let $u_{\mathtt{P}}$ and $w_{\mathtt{P}}$ denote the endpoints of $\mathtt{P}(n)$.
    Then the \emph{interior} of $\mathtt{P}(n)$ is the subgraph of $\mathtt{P}(n)$ induced by $V(\mathtt{P}(n)) \setminus \{ u_{\mathtt{P}}, w_{\mathtt{P}} \}$.

    \item the cycle on $n$ vertices $C_n$.
    We say that a cycle $C$ is \emph{even} if $|C|$ is even, and \emph{odd} if $|C|$ is odd.
    
    \item The \emph{chortling cycle} $C'_5$ on $5$ vertices is a cycle with two chords (see Figure \ref{fig:chortling}):
    \begin{align*}
        V(C'_5) &= \ZZ_5, \\
        E(C'_5) &= \left\{ \{i, i+1\} : i \in \ZZ_5  \right\} \sqcup \{ \{1, 3\}, \{2, 4\} \}.
    \end{align*}

    \item The \emph{wheel graph} $W_n$ on $n \geq 4$ vertices consists of a cycle of length $n-1$ and a central vertex connected to each vertex in the cycle by an edge called a \emph{spoke} (see Figure \ref{fig:W4}). 
    In particular:
    \begin{align*}
        V(W_n) &= \ZZ_{n-1} \sqcup \{ v \}, \\
        E(W_n) &= \left\{ \{i, i+1\}, \{i, v\} : i \in \ZZ_{n-1} \right\}.
    \end{align*}

    \item The \emph{complete multipartite graph} $K_{r_1, ..., r_n}$ ($n > 1$) is any graph on a vertex set of the form $\bigsqcup_{i=1}^n U_i$ ($|U_i| = r_i$ for $i = 1, ..., n$) with edge set:
    \begin{equation*}
        E\left(K_{r_1, ..., r_n}\right) := \left\{\{u,u'\} \colon u \in U_i, u' \in U_j, 1 \leq i < j \leq n\right\}
    \end{equation*}
    (see Figure \ref{fig:multipartite}).
    When $r_1=\cdots=r_n=1$, we denote $K_{\underbrace{1,\ldots,1}_n}$ by $K_n$, and call this the \emph{complete graph} on $n$ vertices.
\end{itemize}

\begin{figure}[H]
    \centering
    \begin{minipage}{0.32\textwidth}
        \centering
        \begin{tikzpicture}
            \foreach \i in {1,...,5} {
                \node[circle, draw, fill=white, minimum size=5pt, inner sep=0pt] (\i) at ({90+72*(\i-1)}:1) {};
            }
            \foreach \i in {1,...,5} {
                \pgfmathtruncatemacro{\j}{mod(\i,5) + 1}
                \draw (\i) -- (\j);
            }
            \draw (3) -- (5);
            \draw (2) -- (4);
        \end{tikzpicture}
        \caption{The chortling  cycle $C'_5$.}\label{fig:chortling}
    \end{minipage}
    \begin{minipage}{0.32\textwidth}
        \centering
        \begin{tikzpicture}
            \foreach \i in {1,...,3} {
                \node[circle, draw, fill=white, minimum size=5pt, inner sep=0pt] (\i) at ({90+120*(\i-1)}:1) {};
            }
            \node[circle, draw, fill=white, minimum size=5pt, inner sep=0pt] (4) at (0,0) {};
            \foreach \i in {1,...,4} {
                \pgfmathtruncatemacro{\j}{mod(\i,3) + 1}
                \draw (\i) -- (\j);
                \draw (\i) -- (4);
            }
        \end{tikzpicture}
        \caption{The wheel graph $W_4$.}\label{fig:W4}
    \end{minipage}
    \begin{minipage}{0.32\textwidth}
        \centering
        \begin{tikzpicture}
            \foreach \i in {1,...,4} {
                \node[circle, draw, fill=white, minimum size=5pt, inner sep=0pt] (\i) at ({90*(\i-1)}:1) {};
            }
            \foreach \i in {1,...,4} {
                \pgfmathtruncatemacro{\j}{mod(\i,4) + 1}
                \draw (\i) -- (\j);
            }
                \draw (2) -- (4);
        \end{tikzpicture}
        \caption{The graph $K_{1,1,2}$.}\label{fig:multipartite}
    \end{minipage}
\end{figure}


\subsection{Matching polytopes}

Let $m \subseteq E(G)$ such that every vertex of $G$ is incident to at most one edge in $m$.
Then we call $m$ a \emph{matching} on $G$, and identify it with the indicator vector $m \in \R^E$ given by:
\begin{equation*}
    m(e) :=
    \begin{cases}
        1 & \textrm{ if } e \in m, \\
        0 & \textrm{ if else}.
    \end{cases}
\end{equation*}
We say a vertex $v$ is \emph{saturated} by a matching $m$ if some edge in $m$ is incident to $v$.
We say a matching $m$ is \emph{perfect} if it saturates every $u \in V(G)$.

The \emph{matching polytope} associated to $G$ is the convex hull of all matchings on $G$:
\[
    P_M(G) := \conv\left\{ m \in \R^E \mid m \textrm{ is a matching on } G \right\}.
\]

\begin{example}
    Consider $W_4$, the wheel graph on $4$ vertices.
    Let $0$, $1$, and $2$ denote the vertices on the outer cycle of $W_4$, and let $v$ denote the central vertex.
    Then $W_4$ has $9$ matchings:
    \begin{itemize}
        \item Three matchings consisting only of an inner spoke: $\{ \{ v, 0\} \}$, $\{ \{v, 1\} \}$, and $\{ \{v, 2\} \}$,
        \item Three matchings consisting only of an outer edge: $\{ \{ 1, 2 \} \}$, $\{ \{ 2, 0\} \}$, and $\{ \{ 0, 1 \} \}$,
        \item Three perfect matchings consisting of an inner spoke paired with an outer edge: $\{ \{v, 0\}, \{1, 2\} \}$, $\{ \{v, 1\}, \{2, 0\} \}$, and $\{ \{v, 2\}, \{0, 1\}\}$.
    \end{itemize}
    For concreteness, let us identify $\R^E$ with $\R^{|E|}$ by ordering the edges: $\{ v, 0\}, \{v, 1\}, \{v, 2\}, \{ 1, 2 \}, \{ 2, 0\}, \{ 0, 1 \}$.
    Then the matching polytope of $W_4$ is given by:
    \begin{equation*}
        P_M(W_4) =
        \conv\left\{ 
        e_1, e_2, e_3, e_4, e_5, e_6, e_1 + e_4, e_2 + e_5, e_3 + e_6
        \right\},
    \end{equation*}
    where $e_i$ denotes the $i^{\textrm{th}}$ standard basis vector in $\R^6$.

\end{example}

Now, let $s \subseteq V(G)$ such that each edge of $G$ is incident to at most one vertex in $s$. Then we call $s$ a \emph{stable set} on $G$, and identify it with the indicator vector $s \in \R^V$ given by:
\begin{equation*}
    s(v) :=
    \begin{cases}
        1 & \textrm{ if } v \in s, \\
        0 & \textrm{ if else.}
    \end{cases}
\end{equation*}
The \emph{stable-set polytope} associated to $G$ is the convex hull of the stable sets on $G$:
\[
    \textrm{Stab}(G) := \conv\left\{ s \in \R^V \mid s \textrm{ is a stable set on } G \right\}.
\]

Matchings and stable sets are closely related: every matching on $G$ corresponds to a stable set on the \emph{line graph} $L(G)$.
\begin{definition} \label{def: line graph}
    Let $G$ be a graph. The corresponding \emph{line graph} $L(G)$ has $V(L(G)) := E(G)$. Two vertices in $L(G)$ are linked by an edge if and only if the corresponding edges are adjacent in $G$.
\end{definition}

This correspondence reveals that matching polytopes are special stable-set polytopes, so we immediately obtain following proposition:
\begin{proposition} \label{prop: matchings are stable sets}
    For a graph $G$, the matching polytope of $G$ is isomorphic to the stable-set polytope of $L(G)$.
\end{proposition}

When $G$ is perfect, the stable-set polytope $\textrm{Stab}(G)$ has the IDP\cite{ohsugi2001compressed}.
Thus, when $L(G)$ is perfect, the matching polytope $P_M(G)$ has the IDP.
The line graph $L(G)$ is perfect if and only if every block of $G$ is either bipartite, $K_4$, or $K_{1, 1, n}$ \cite{MR0457293,maffray1992kernels}.
Thus, we also obtain the following proposition:

\begin{proposition}\label{prop:stab IDP}
    Suppose every block of $G$ is either bipartite, $K_4$, or $K_{1, 1, n}$.
    Then the matching polytope $P_M(G)$ has the IDP.
\end{proposition}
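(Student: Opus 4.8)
The plan is to assemble the three ingredients recorded just above the statement, so the argument is essentially a chaining of known results. First I would pass from matchings to stable sets: by Proposition~\ref{prop: matchings are stable sets}, $P_M(G)$ is isomorphic to $\textrm{Stab}(L(G))$. In fact, since $V(L(G)) = E(G)$ by Definition~\ref{def: line graph}, a matching $m$ on $G$ and its corresponding stable set on $L(G)$ have the \emph{same} indicator vector in $\R^{E(G)}$, so $P_M(G)$ and $\textrm{Stab}(L(G))$ are literally equal as lattice polytopes in $\R^{E(G)}$. In particular $P_M(G)$ has the IDP if and only if $\textrm{Stab}(L(G))$ does.

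Next I would invoke the structural characterization of graphs with perfect line graphs: $L(G)$ is perfect if and only if every block of $G$ is bipartite, $K_4$, or $K_{1,1,n}$ \cite{MR0457293, maffray1992kernels}. The hypothesis of the proposition is exactly the right-hand condition of this equivalence, so under our assumption $L(G)$ is a perfect graph.

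Finally, I would apply the theorem of Ohsugi and Hibi \cite{ohsugi2001compressed}: the stable-set polytope of any perfect graph is compressed and hence has the IDP. Taking $H = L(G)$, this yields that $\textrm{Stab}(L(G))$ has the IDP, and therefore so does $P_M(G)$ by the identification above.

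The only step that requires any care — and the closest thing to an obstacle — is ensuring that the identification $P_M(G) = \textrm{Stab}(L(G))$ is one of lattice polytopes (not merely an $\R$-linear or combinatorial isomorphism), so that the IDP genuinely transfers across it; this is immediate here because $V(L(G)) = E(G)$ makes the two polytopes coincide on the nose inside $\R^{E(G)}$. Everything else is a direct citation, so no further work is needed.
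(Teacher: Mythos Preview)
Your proposal is correct and follows essentially the same approach as the paper: the paper's justification (given in the paragraph immediately preceding the proposition rather than in a separate proof environment) is precisely the chain $P_M(G)\cong\textrm{Stab}(L(G))$, then the characterization of graphs with perfect line graph \cite{MR0457293,maffray1992kernels}, then the IDP result for stable-set polytopes of perfect graphs \cite{ohsugi2001compressed}. Your added remark that the identification is an equality of lattice polytopes in $\R^{E(G)}$ is a nice clarification the paper leaves implicit.
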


There exist, however, many matching polytopes with the IDP whose associated graphs are not of this form.
To prove that these polytopes possess the IDP we will need the minimal inequality description of the matching polytope.

\begin{definition} \label{def: essential vertex, factor critical}
    Given a vertex $u \in V(G)$, let $\deg_G(u)$ denote the degree of $v$ in $G$ and let $\iota(u)$ denote the set of edges incident to $u$ in $G$.
    We say that $u \in V$ is \emph{essential} if one of the following three conditions is satisfied:
    \begin{enumerate}
        \item $\deg_G(u)=1$ and the degree of its neighbor is also $1$,
        \item $\mathrm{deg}_G(u)=2$ and its neighbors are not adjacent, or
        \item $\deg_G(u) \ge 3$.
    \end{enumerate}
    A graph $G$ is \emph{factor-critical} if for every $u \in V$, $G \setminus u$ has a perfect matching.
\end{definition}

\begin{proposition}[{\cite{PulleyblankEdmonds}}]\label{prop:inequality}
The matching polytope $P_M(G)$ can be described as
\begin{align*}\label{inequality_description}
    P_M(G) 
    =
    \left\{ 
    x \in \mathbb{R}^{E} \;\;\; \Bigg{|} \;
    \begin{array}{rlc}
        x(e) &\geq 0, & \text{ for every } e \in E \vspace{0.2cm}\\
        \displaystyle \sum_{e \in \iota(u)} x(e) &\leq 1, &  \text{ for every  essential vertex } u \vspace{-0.3cm}\\ \\ 
        \displaystyle \sum_{e \in U} x(e) &\leq \frac{|U| - 1}{2}, &  \text{ for every $2$-connected induced} \vspace{-0.4cm}\\
        \; & \; &  \text{ factor-critical subgraph $U$}
    \end{array}
    \right\}.
\end{align*}
\end{proposition}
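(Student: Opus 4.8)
My plan is to obtain the claimed description from Edmonds' matching polytope theorem --- the same description but with a vertex inequality for \emph{every} vertex and a subset inequality for \emph{every} odd $U$ with $|U|\ge 3$ --- and then to show that each inequality attached to a non-essential vertex, or to a set $U$ with $G[U]$ not both $2$-connected and factor-critical, is a nonnegative combination of the retained ones, hence redundant. For Edmonds' theorem I would argue as usual. The inclusion ``$\subseteq$'' is immediate, since a matching meets each vertex at most once and has at most $\lfloor |U|/2\rfloor=(|U|-1)/2$ edges inside an odd set $U$. For ``$\supseteq$'' one inducts on $|V(G)|+|E(G)|$: given a vertex $x^\ast$ of the polytope, delete an edge with $x^\ast(e)=0$; if $x^\ast(e)=1$ then every other edge at its endpoints has value $0$, so delete both endpoints; and if $0<x^\ast(e)<1$ for all $e$, note that the tight constraints at $x^\ast$ cannot all be vertex constraints (else $x^\ast\equiv 1/2$ on a disjoint union of odd cycles, violating a subset inequality), so some odd subset inequality is tight, and contracting that subset yields strictly smaller instances whose matchings splice back together.

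\textbf{Discarding non-essential vertices.} Let $u$ be non-essential. If $\deg_G(u)=0$, the inequality is $0\le 1$. If $\deg_G(u)=1$, say $\iota(u)=\{e\}$ with $e=\{u,w\}$, then $w$ is essential: its degree is at least $2$, and if it equals $2$ its neighbors are $u$ and some $z\ne u$ with $u\not\sim z$, so condition $(2)$ of Definition~\ref{def: essential vertex, factor critical} holds; thus $\sum_{f\in\iota(w)}x(f)\le 1$ is retained and $x(e)\le\sum_{f\in\iota(w)}x(f)\le 1$ by nonnegativity. If $\deg_G(u)=2$ with adjacent neighbors $a,b$, then $\{u,a,b\}$ induces a triangle, which is $2$-connected and factor-critical, so $x(\{u,a\})+x(\{u,b\})+x(\{a,b\})\le 1$ is retained, and adding $x(\{a,b\})\ge 0$ gives $x(\{u,a\})+x(\{u,b\})\le 1$.

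\textbf{Discarding bad odd sets.} It remains to derive $\sum_{e\in E(U)}x(e)\le (|U|-1)/2$ for every odd $U$, $|U|\ge 3$, with $G[U]$ not both $2$-connected and factor-critical. I would induct on $|U|$, simultaneously proving the auxiliary even-set bound $\sum_{e\in E(W)}x(e)\le |W|/2$ for even $W$ (split a vertex off $W$ and use its vertex inequality, available by the previous step). If $G[U]$ is disconnected, split its components into two parts; as $E(U)$ is the disjoint union of the two induced edge sets and $|U|$ is odd, the $U$-inequality follows from the smaller subset inequalities, with one even-set bound used when a part has even size. If $G[U]$ is connected with a cut vertex $c$, write $U=U_1\cup U_2$ with $U_1\cap U_2=\{c\}$; when $|U_1|,|U_2|$ are both odd add the subset inequalities for $U_1$ and $U_2$, and when both are even write $E(U)=E(U_1\setminus c)\sqcup E(U_2\setminus c)\sqcup(\iota(c)\cap E(U))$ and add the odd-order subset inequalities for $U_i\setminus c$ to the vertex inequality at $c$; either way the parities make the bounds sum to exactly $(|U|-1)/2$. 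Finally, suppose $G[U]$ is $2$-connected but not factor-critical. If $G[U]$ is bipartite, its smaller color class $X$ is a vertex cover with $|X|\le (|U|-1)/2$, and $\sum_{e\in E(U)}x(e)=\sum_{u\in X}\sum_{f\in\iota(u)\cap E(U)}x(f)\le\sum_{u\in X}\sum_{f\in\iota(u)}x(f)\le |X|$. If $G[U]$ is not bipartite, it contains an odd cycle $C$; then $G[V(C)]$ is $2$-connected and factor-critical (it contains a spanning odd cycle) with $|V(C)|<|U|$ (else $G[U]$ itself would be factor-critical), so its subset inequality is retained, and one contracts $V(C)$ and recurses as in the proof of Edmonds' theorem.

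\textbf{The main obstacle.} The genuinely delicate part is this last step: keeping the parity bookkeeping uniform across the block decomposition, and making the contraction argument precise in the $2$-connected, non-factor-critical, non-bipartite case, where one must invoke the Gallai--Edmonds structure of $G[U]$ --- equivalently, the fact that $2$-connected factor-critical graphs are exactly those admitting an \emph{odd} open ear decomposition, refining Theorem~\ref{thm_factor} --- both to locate a proper $2$-connected factor-critical subset to contract and to check that the contracted fractional point stays feasible.
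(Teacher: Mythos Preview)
The paper does not prove this proposition; it is quoted from Pulleyblank--Edmonds as a known result, so there is no proof in the paper to compare your attempt against.

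That said, your strategy --- start from Edmonds' description and prune the redundant inequalities --- is the standard route to the Pulleyblank--Edmonds refinement, and your treatment of non-essential vertices and of the disconnected / cut-vertex cases is correct. The gap is precisely the one you flag: the $2$-connected, non-bipartite, non-factor-critical case. Your proposed fix (locate an odd cycle, contract, recurse, invoke Gallai--Edmonds) is heavier machinery than needed, and as written ``contract and recurse'' is not an argument --- you never say which inequalities in the original graph combine to give the desired one.

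There is a direct argument that handles \emph{every} non-factor-critical odd $U$ uniformly, so your bipartite/non-bipartite split is unnecessary. If $G[U]$ is not factor-critical, pick $v\in U$ with $G[U]\setminus v$ having no perfect matching; Tutte's condition yields $S\subseteq U\setminus\{v\}$ with $o\bigl(G[U]\setminus(S\cup\{v\})\bigr)>|S|$. Set $S'=S\cup\{v\}$ and let $K_1,\dots,K_r$ be the components of $G[U]\setminus S'$, of which $o$ are odd. Every edge of $G[U]$ either lies in some $K_j$ or meets $S'$, so summing the vertex inequalities over $S'$ (all valid by your first reduction) together with the inductively available subset bounds on the $K_j$ gives
\[
\sum_{e\in E(U)}x(e)\;\le\;|S'|+\sum_{j}\left\lfloor\frac{|K_j|}{2}\right\rfloor\;=\;|S'|+\frac{|U|-|S'|-o}{2}\;=\;\frac{|U|+|S'|-o}{2}.
\]
Now $o\ge|S'|$, and since $\sum_j|K_j|=|U|-|S'|$ forces $o\equiv|U|-|S'|\pmod 2$ with $|U|$ odd, we get $o\not\equiv|S'|\pmod 2$, hence $o\ge|S'|+1$ and the bound is at most $(|U|-1)/2$. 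This closes your gap without contraction or Gallai--Edmonds.
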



\section{Gorenstein matching polytopes}\label{sec:gorenstein}
In this section, we characterize when a matching polytope is Gorenstein and show that Gorenstein matching polytopes have the IDP.

\subsection{Gorenstein characterization} \label{Gorenstein characterization}
First, we give the following lemma:

\begin{lemma}\label{lem_gor}
Let $G$ be a connected graph and suppose that there exists a positive integer $\delta$ such that the following conditions hold:
\begin{itemize}
\item[(C1)]\label{C1} All essential vertices of $G$ have the same degree $\delta$;
\item[(C2)]\label{C2} For any $2$-connected induced factor-critical subgraph $H$, one has \[|E(H)|=(\delta+1)\frac{|H|-1}{2}-1.\]
\end{itemize}
Then,  
\begin{itemize}
\item[(i)] for $\delta = 3$, if $G$ has an odd cycle $C$, then the length of $C$ is at most $5$.
Moreover, if $G$ contains $C_5$, then $G[V(C_5)]=C'_5$.
\item[(ii)] for $\delta \ge 4$, then $G$ is bipartite.
\end{itemize}
\end{lemma}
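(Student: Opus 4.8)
The plan rests on two quick consequences of the hypotheses. First, (C1) bounds the maximum degree of $G$: if $\deg_G(u)\ge 3$ then $u$ is essential by the third clause of Definition~\ref{def: essential vertex, factor critical}, so $\deg_G(u)=\delta$; hence every vertex of $G$ has degree in $\{1,2,\delta\}$ and in particular $\Delta(G)\le\delta$. Second, the vertex set of any odd cycle of $G$ induces a subgraph to which (C2) applies: if $C=C_{2m+1}$ is an odd cycle and $H:=G[V(C)]$, then $H$ contains the Hamilton cycle $C$, so $H$ is $2$-connected, and for each $v\in V(H)$ the graph $H\setminus v$ contains the Hamilton path $C\setminus v$ on $2m$ vertices, which has a perfect matching, so $H$ is factor-critical. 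Thus $H$ is a $2$-connected induced factor-critical subgraph and $|E(H)|=(\delta+1)m-1$ by (C2). I would also use the standard fact that a shortest odd cycle is chordless, so that for it $H=C$ and $|E(H)|=2m+1$.

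For part (ii), assume $G$ has an odd cycle (otherwise it is bipartite and there is nothing to prove) and take a shortest one, $C=C_{2m+1}$. Being chordless, $G[V(C)]=C$, and (C2) gives $(\delta+1)m-1=2m+1$, i.e. $(\delta-1)m=2$. When $\delta\ge 4$ the left-hand side is at least $3m\ge 3$, a contradiction; hence $G$ has no odd cycle and is bipartite.

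For part (i), fix $\delta=3$. If $C=C_{2m+1}$ is any odd cycle, then $|E(H)|=4m-1$ by (C2) applied to $H:=G[V(C)]$, while $\Delta(G)\le 3$ forces $2|E(H)|=\sum_{v\in V(H)}\deg_H(v)\le 3(2m+1)$, hence $|E(H)|\le 3m+1$; together these give $m\le 2$, i.e. $|C|\le 5$. For the refinement, suppose $C_5\subseteq G$ and set $H:=G[V(C_5)]$; the $m=2$ case gives $|E(H)|=7$, so $H$ is the $5$-cycle together with exactly two extra edges, each a chord. If these two chords shared an endpoint, that vertex would have degree $4$ in $H$ and hence in $G$, contradicting $\Delta(G)\le 3$; so the two chords are vertex-disjoint, and checking the finitely many cases shows the only $5$-cycle with two vertex-disjoint chords, up to relabeling, is $C'_5$. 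Therefore $G[V(C_5)]=C'_5$.

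The conceptual content is short: play the edge-count identity (C2) against the maximum-degree bound coming from (C1). I expect the main care to go into the preliminary reductions of the first paragraph — verifying that $G[V(C)]$ is genuinely induced, $2$-connected, and factor-critical (the even Hamilton path argument), and that degree bounds pass from $G$ to its induced subgraphs — rather than into any calculation; the $C'_5$ identification is a brief finite check.
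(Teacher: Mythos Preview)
Your proof is correct and follows essentially the same approach as the paper's: both apply (C2) to $H=G[V(C)]$ for an odd cycle $C$ (noting it is $2$-connected and factor-critical via the Hamilton cycle/path), bound degrees in $H$ by $\delta$ via (C1), and then for $\delta=3$ play the handshake bound against the edge count, while for $\delta\ge 4$ use that a shortest odd cycle is chordless (equivalently, that a chord would yield a shorter odd cycle). Your identification of $C'_5$ via the two vertex-disjoint chords is exactly the paper's ``$\deg_G(u)\le 3$'' argument.
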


\begin{proof}
For an odd cycle $C$ of $G$, let $H_C:=G[V(C)]$.
Note that $H_C$ is 2-connected and factor-critical.
\\ 

\noindent (i) Suppose $\delta = 3$, and let $C$ be an odd cycle of $G$.
Then by (C1): \[
\deg_{H_C}(u)\le \deg_G(u)\le 3 \text{ for any } u\in V(G).\]
Moreover, by (C2) and the Handshake Lemma:
\[2\left(4\cdot \frac{|H_C|-1}{2}-1\right)=\; 2|E(H_C)|=\;\sum_{u \in H_C}\deg_{H_C}(u)\le 3|H_C|.\]
Therefore, since $|C| = |H_C|$, we have $4|C|-6\le 3|C|$, so the length of $C$ is at most 5.

Furthermore, if $G$ has an odd cycle $C_5$, then:
\[|E(H_{C_5})|=4\cdot \frac{|H_{C_5}|-1}{2}-1=7,\]
that is, $C_5$ has just two chords and we have $H_{C_5}=C'_5$ since $\deg_G(u)\le 3$ for any $u\in V(G)$.
\\

\noindent (ii) Now let $\delta \geq 4$.
Suppose for contradiction that $G$ has an odd cycle.
Let $C_{2p+1}$ be a shortest odd cycle of $G$.
From (C2), we have:
\begin{align*}
|E(H_C)|&=(\delta+1)\left(\frac{|H_C|-1}{2}\right)-1 \\
&=(\delta+1)p-1>2p+1=|E(C_{2p+1})|.
\end{align*}
Thus, $C_{2p+1}$ has a chord and $G$ contains a shorter odd cycle than $C_{2p+1}$, a contradiction.
Therefore, $G$ has no odd cycles, equivalently, $G$ is bipartite.

\end{proof}

\begin{lemma}\label{lemma:gor}
Let $G$ be a connected graph.
Then the following are equivalent:
\begin{itemize}
\item[(i)] $P_M(G)$ is Gorenstein.

\item[(ii)] There exists a positive integer $\delta$ such that conditions (C1) and (C2) from Lemma \ref{C1} hold.
\end{itemize}
\end{lemma}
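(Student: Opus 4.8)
The plan is to use the reflexivity criterion of Theorem \ref{refl}(iii) together with the inequality description of $P_M(G)$ from Proposition \ref{prop:inequality}. Recall that $P_M(G)$ is Gorenstein iff there is some $k \in \Z_{>0}$ and an interior-type lattice point $\alpha \in kP_M(G) \cap \Z^E$ that has height exactly $1$ over every facet of $P_M(G)$. So the first task is to pin down which of the inequalities in Proposition \ref{prop:inequality} are facet-defining: the nonnegativity constraints $x(e) \ge 0$, the essential-vertex constraints $\sum_{e \in \iota(u)} x(e) \le 1$, and the blossom-type constraints $\sum_{e \in U} x(e) \le \frac{|U|-1}{2}$ for $2$-connected induced factor-critical subgraphs $U$. (For the nonnegativity facets one should check that a point in the relative interior of $kP_M(G)$ — e.g. a scaled sum over all matchings, or equivalently $\alpha$ itself — has strictly positive coordinates, so these facets never force a constraint on $\alpha$ beyond positivity; they are consistent with height $1$ automatically once $\alpha$ has all coordinates $\ge 1$, but one must be a little careful and instead argue that any candidate $\alpha$ from the other two families of facets automatically has positive entries.)

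Next I would look for the candidate point $\alpha$. The natural guess is $\alpha = \mathbf{1} \in \Z^E$, the all-ones vector, possibly after rescaling: if all essential vertices have degree $\delta$, then over the facet at an essential vertex $u$ the value of $\langle \text{(row)}, \mathbf 1\rangle$ is $\deg_G(u) = \delta$, and the right-hand side is $1$, so $\mathbf 1$ lies on the hyperplane $\sum_{e\in\iota(u)} x(e) = \delta$, which is the boundary of $\delta \cdot(\text{that facet's constraint})$; thus the height of $\mathbf 1$ over that facet (inside $\delta \cdot P_M(G)$, say) is $1$ precisely when all essential vertices share the degree $\delta$ — this is exactly (C1). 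Similarly, over the blossom facet for a $2$-connected induced factor-critical subgraph $U$, the value $\langle\text{(row)},\mathbf 1\rangle = |E(U)|$ and the dilated right-hand side is $\delta\cdot\frac{|U|-1}{2}$; requiring height $1$ gives $|E(U)| = \delta\cdot\frac{|U|-1}{2} - 1$, i.e. $|E(U)| = (\delta+1)\frac{|U|-1}{2} - 1$ once one accounts for the homogenization/height-$1$ bookkeeping — this is exactly (C2). So the crux is a careful translation: "height $1$ over the dilated facet" $\iff$ (C1) and (C2) with a uniform $\delta$. I would phrase this via Theorem \ref{refl}(iii): $P_M(G)$ is Gorenstein iff there exist $k$ and $\alpha \in kP_M(G)\cap\Z^E$ with height $1$ over every facet; the facets are (a subset of) the three families above, and demanding height $1$ over the essential-vertex facets forces $\alpha$ to satisfy $\sum_{e\in\iota(u)}\alpha(e) = k-1$ for every essential $u$, while height $1$ over the blossom facets forces $\sum_{e\in U}\alpha(e) = k\frac{|U|-1}{2} - 1$; feasibility of such an $\alpha$ (membership in $kP_M(G)$) is where the uniform-$\delta$ conditions enter.

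For the direction (ii) $\Rightarrow$ (i): assuming (C1) and (C2) with parameter $\delta$, set $k := \delta$ (or $\delta$ plus a correction) and $\alpha := \mathbf 1 \in \Z^E$, or more precisely a suitable multiple; verify directly from (C1) and (C2) that $\alpha$ sits at height $1$ over each essential-vertex facet and each blossom facet, and that $\alpha \in kP_M(G)$ — the latter can be checked either by exhibiting $\alpha$ as a sum of $k$ matchings (using that a $\delta$-regular-on-essential-vertices graph has enough matchings; this likely reduces to an edge-coloring / Vizing-type argument on the relevant subgraphs) or by plugging $\alpha$ into every inequality of Proposition \ref{prop:inequality} and checking it lies in the correct dilate. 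For the direction (i) $\Rightarrow$ (ii): given a height-$1$ point $\alpha \in kP_M(G)$, read off from the essential-vertex facets that $\sum_{e\in\iota(u)}\alpha(e)$ is a constant $c$ independent of the essential vertex $u$; then the height-$1$ condition $\frac{c}{k-?} $... forces $\deg_G(u)$ to be constant across essential vertices — call it $\delta$ — and similarly the blossom facets yield (C2) with the same $\delta$. The main obstacle I anticipate is the bookkeeping in the facet/height translation: correctly identifying which inequalities in Proposition \ref{prop:inequality} are genuinely facet-defining (redundant blossom constraints must be discarded, and one must handle low-degree or degenerate vertices), and correctly converting "$\alpha$ has height $1$ over facet $F$ of $P$" into a clean arithmetic identity when $P$ is given by a non-homogeneous inequality system — this is exactly the place where the $+1$ vs.\ $-1$ in (C2) and the appearance of $\delta+1$ rather than $\delta$ comes from, and it must be done with care. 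A secondary obstacle is the "$\alpha \in kP_M(G)$" feasibility check in the (ii) $\Rightarrow$ (i) direction, which may need the combinatorial input that graphs satisfying (C1)–(C2) decompose nicely (bipartite, or built from $C_5'$, $K_4$, $K_{1,1,n}$), possibly invoking Lemma \ref{lem_gor}.
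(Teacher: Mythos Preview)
Your approach is the same as the paper's: apply Theorem~\ref{refl}(iii) together with the facet description in Proposition~\ref{prop:inequality}. The key simplification you are circling around but not quite landing on is that the nonnegativity facets $x(e)\ge 0$ do not merely give $\alpha(e)\ge 1$; having height exactly $1$ over the facet $\{x(e)=0\}$ forces $\alpha(e)=1$ for every $e\in E$, so $\alpha=\mathbf 1$ is not a guess but a consequence of (i). Once $\alpha=\mathbf 1$, the height-$1$ condition over an essential-vertex facet reads $\deg_G(u)=k-1$ and over a blossom facet reads $|E(H)|=k\tfrac{|H|-1}{2}-1$; setting $\delta:=k-1$ gives (C1) and (C2) with no further bookkeeping, and the $\delta+1$ in (C2) is just $k$.

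For (ii)$\Rightarrow$(i), take $k:=\delta+1$ and $\alpha:=\mathbf 1$; membership $\alpha\in kP_M(G)$ is immediate from the very same inequality description (each listed inequality holds with slack exactly $1$), so no edge-coloring, Vizing-type argument, or appeal to Lemma~\ref{lem_gor} is needed. Your worry about redundant inequalities is also moot: Proposition~\ref{prop:inequality} (Pulleyblank--Edmonds) is already the \emph{minimal} description, so every inequality listed there is facet-defining.
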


\begin{proof}
Assume that $P_M(G)$ is Gorenstein.
By Theorem~\ref{refl}, there exist $k\in \Z_{>0}$ and $\alpha \in k P\cap \Z^{E}$ such that $\alpha$ has height $1$ over each facet of $P_M(G)$.
From Proposition~\ref{prop:inequality}, $k P_M(G)$ has the facet defined by the inequality $x(e)\ge 0$ for each $e\in E$.
Thus, $\alpha$ satisfies $\alpha(e)=1$ for all $e\in E$.
Moreover, for each essential vertex $u$ and each $2$-connected induced factor-critical subgraph $H$ of $G$, $k P_M(G)$ has the facets corresponding to the inequalities:
\[\sum_{e \in \iota(u)} x(e) \leq k \text{ and } 
\sum_{e \in E(H)} x(e) \leq k \left( \frac{|H| - 1}{2}\right).\]
Thus, we can see that:
\begin{align*}
\deg_G(u) = \sum_{e\in \iota(u)}\alpha(e)=k-1, \; \text{ and } \\
|E(H)|=\sum_{e\in E(H)}\alpha(e)= k \left( \frac{|H| - 1}{2} \right) -1.
\end{align*}
Therefore, all essential vertices have the same degree $\delta=k-1$ and the equality: \[|E(H)|=(\delta+1)\left( \frac{|H|-1}{2} \right) -1\] holds for each $2$-connected induced factor-critical subgraph $H$.

The converse also follows from Theorem~\ref{refl}.
\end{proof}

The following result characterizes Gorenstein matching polytopes:

\begin{theorem}\label{thm:gor}
Let $G$ be a connected graph.
$P_M(G)$ is Gorenstein if and only if $G$ is one of the following graphs:
\begin{itemize}
\item[(a)]\label{class:a} A bipartite graph whose essential vertices have the same degree;
\item[(b)] \label{class:b}$C_5$;
\item[(c)]\label{class:c} A graph whose essential vertices have the same degree $3$ and whose blocks are bipartite, $K_3$, $K_4$, $K_{1,1,2}$ or $C'_5$.
\end{itemize}
\end{theorem}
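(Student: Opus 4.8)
The plan is to combine Lemma~\ref{lemma:gor} with Lemma~\ref{lem_gor}, reducing the statement to: a connected graph $G$ admits a positive integer $\delta$ satisfying (C1) and (C2) if and only if $G$ lies in class (a), (b), or (c). For the ``if'' direction I would exhibit a suitable $\delta$ for each class and check the two conditions by hand. The key remark for (C2) is that a factor-critical graph on at least two vertices is non-bipartite (for a bipartition $(A,B)$, deleting a vertex of $A$ and deleting a vertex of $B$ would both have to leave a balanced bipartition, which cannot happen simultaneously), so a bipartite graph has no $2$-connected induced factor-critical subgraph and (C2) holds vacuously; this settles class (a), taking $\delta$ to be the common degree of the essential vertices (and $\delta=1$ if there are none). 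For $C_5$ take $\delta=2$: every vertex is essential of degree $2$, and $C_5$ is its own only $2$-connected induced factor-critical subgraph, with $|E(C_5)|=5=(2+1)\tfrac{5-1}{2}-1$. For class (c) take $\delta=3$; then (C1) is the hypothesis, and since any $2$-connected induced factor-critical subgraph lies inside a single block and is induced there, (C2) only needs to be checked on the list of such subgraphs of $K_3,K_4,K_{1,1,2},C'_5$, namely the triangles they contain together with $C'_5$ itself, where $|E(K_3)|=3=2\cdot 3-3$ and $|E(C'_5)|=7=2\cdot 5-3$ agree with the required value $(\delta+1)\tfrac{|H|-1}{2}-1$.

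For the ``only if'' direction, assume $\delta$ satisfies (C1) and (C2). Since every vertex of degree at least $3$ is essential, either $\delta\ge 3$ or $G$ has maximum degree at most $2$. If $\delta\ge 4$, Lemma~\ref{lem_gor}(ii) gives that $G$ is bipartite, so by (C1) it is in class (a). If $G$ has maximum degree at most $2$ then, being connected, it is a path or a cycle; paths are bipartite, hence class (a); a cycle $C_n$ is its own only candidate $2$-connected induced factor-critical subgraph and is factor-critical exactly when $n$ is odd, in which case (C2) reads $(\delta+1)(n-1)=2(n+1)$, whose only solutions with $n$ odd are $(n,\delta)=(3,3)$, giving $G=K_3$ (class (c), a single block $K_3$ with no essential vertices), and $(n,\delta)=(5,2)$, giving $G=C_5$ (class (b)); even cycles are bipartite, class (a).

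This leaves $\delta=3$ with a vertex of degree $3$; then (C1) forces maximum degree $3$, and Lemma~\ref{lem_gor}(i) gives that every odd cycle has length at most $5$ and every $5$-cycle of $G$ induces $C'_5$. I would first prove that every $2$-connected induced factor-critical subgraph $H$ of $G$ is $K_3$ or $C'_5$: $H$ has odd order $\ge 3$, and (C2) with the handshake lemma gives $4|V(H)|-6=2|E(H)|\le 3|V(H)|$, so $|V(H)|\le 6$ and hence $|V(H)|\in\{3,5\}$; when $|V(H)|=5$ we get $|E(H)|=7$ and degree sequence $(3,3,3,3,2)$, and deleting the degree-$2$ vertex leaves $K_{1,1,2}$, which forces $H\cong C'_5$. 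Since factor-criticality is preserved when edges are added, it follows that no $2$-connected factor-critical subgraph of $G$, induced or not, has more than $5$ vertices. Now let $B$ be a block of $G$; if $B$ is an edge or bipartite there is nothing to do, so suppose $B$ is $2$-connected and non-bipartite. Then $B$ has an odd cycle, and its shortest odd cycle has length $3$ (length $5$ is impossible, since such a cycle induces $C'_5\subseteq B$ and $C'_5$ contains a triangle), so $B$ contains a triangle $T$.

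The decisive and most delicate step is to show $|V(B)|\le 5$. Granting it, a short case check finishes: on $3$ vertices a $2$-connected graph with a triangle is $K_3$; on $4$ vertices its fourth vertex is adjacent, by $2$-connectivity, to at least two vertices of $T$, giving $K_4$ or $K_{1,1,2}$; on $5$ vertices the at most three edges leaving $T$ (each vertex of $T$ has at most one neighbour outside $T$) together with $2$-connectivity leave only $C'_5$ and a triangle with a length-$3$ handle, and the latter is factor-critical with $6\ne 7$ edges, so (C2) rules it out; hence every block is bipartite, $K_3$, $K_4$, $K_{1,1,2}$, or $C'_5$, and with (C1) this places $G$ in class (c). To prove $|V(B)|\le 5$, suppose $|V(B)|\ge 6$ and take (by Theorem~\ref{thm_factor}) an open ear decomposition $B=T\cup \mathtt{P}_1\cup\cdots\cup \mathtt{P}_r$ whose initial cycle is $T$, so $r\ge 1$. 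For each new ear I would use three obstructions: there is no odd cycle of length $\ge 7$; a subgraph obtained from the odd cycle $T$ by adding odd ears is $2$-connected and factor-critical, hence (by the previous paragraph) has at most $5$ vertices; and every $5$-cycle induces $C'_5$ and so has exactly two chords, whereas the endpoints of ears become degree-$3$ (``full'') and can supply fewer than two chords. These force $\mathtt{P}_1$ to have at most two internal vertices, so $|V(T\cup \mathtt{P}_1)|\le 5<6$ and $r\ge 2$; applying the same reasoning to $\mathtt{P}_2$ (whose endpoints must be among the few non-full vertices of $T\cup\mathtt{P}_1$) forces $T\cup\mathtt{P}_1\cup\mathtt{P}_2$ to be $K_4$, $C'_5$, or a six-vertex graph already containing a ``bad'' $5$-cycle, and in each subcase either no further ear has an admissible endpoint or the bad $5$-cycle contradicts Lemma~\ref{lem_gor}(i); so $|V(B)|\ge 6$ is impossible. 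The step I expect to be the main obstacle is exactly this enumeration over ears: it is a finite but fiddly case analysis, and keeping track of which vertices are already ``full'' is what makes the odd-cycle and factor-criticality constraints bind.
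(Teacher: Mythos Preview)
Your proof is correct and follows the same overall strategy as the paper: reduce via Lemma~\ref{lemma:gor} to conditions (C1)--(C2), invoke Lemma~\ref{lem_gor} for $\delta\ge 4$ and for the odd-cycle constraints at $\delta=3$, handle the max-degree-$\le 2$ case directly, and analyse the $\delta=3$ non-bipartite blocks via an ear decomposition together with the max-degree-$3$ bound.

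The organisation of the $\delta=3$ block analysis differs slightly. The paper splits on whether a block $B$ contains a $5$-cycle: if so, Lemma~\ref{lem_gor}(i) gives $C'_5\subseteq B$ as an induced subgraph, and since $C'_5$ has only one vertex of degree $<3$, no further ear can be attached and $B=C'_5$; if not, the paper asserts $B\in\{K_3,K_4,K_{1,1,2}\}$ essentially without proof. You instead bound $|V(B)|\le 5$ uniformly by ear-decomposing from a triangle and then enumerate the small blocks. Your route is more self-contained and fills in precisely the case the paper glosses over, at the cost of the ear-by-ear casework you correctly flag as the crux; the paper's route is shorter when $B$ contains a $5$-cycle (the ``only one non-full vertex in $C'_5$'' observation is cleaner than tracking ears from a triangle) but leaves the triangle-only case underspecified. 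Your ``if'' direction is also considerably more explicit than the paper's one-line ``we can easily find such $\delta$''.
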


\begin{proof}

From Lemma~\ref{lemma:gor}, $P_M(G)$ is Gorenstein if and only if there exists a positive integer $\delta$ such that conditions (C1) and (C2) hold.
We can easily find such $\delta$ for graphs of type (a), (b), and (c). 
Therefore, it is enough to show that if there exists a positive integer $\delta$ such that conditions (C1) and (C2) hold, then $G$ is of type (a), (b), or (c).

First, if $\delta \le 2$, then $G$ is a path or a cycle since $\deg_G(u) \le 2$ for any vertex $u$.
We see that $C_{2p+1}$ with $p\ge 3$ does not satisfy (C2), thus $G$ must be of type (a), (b), or (c).

Next, suppose $\delta=3$.
If $G$ is bipartite, then $G$ is of type (a), so we may assume that $G$ has an odd cycle.
Note that the lengths of odd cycles of $G$ are $3$ or $5$ by Lemma~\ref{lem_gor} (i).
We will show that $G$ is of type (c).
Take a block $B$ of $G$ which has an odd cycle.
If all odd cycles of $B$ have the same length $3$, then $B=K_4$ or $K_{1,1,n}$ with $n\le 2$ by (C1).

Suppose that $B$ contains $C_5$.
Then we can see that $H_{C_5}=C'_5$ by Lemma~\ref{lem_gor} (i) and that $B$ has an open ear decomposition $C\cup \mathtt{P}_1\cup \cdots\cup \mathtt{P}_r$ such that $C\cup \mathtt{P}_1\cup \mathtt{P}_2=C'_5$ for some $r\in \Z_{\ge 0}$ from Theorem~\ref{thm_factor}.
Each path $\mathtt{P}_i$ has length $2$, since $B$ cannot contain a cycle of length greater than $5$.
Thus, if $r\ge 3$, then there exists a vertex $u$ of $C_5$ with $\deg_B(u)\ge 4$, a contradiction to (C1).
Therefore, we must have $B=C'_5$, and hence $G$ is of type (c).

Finally, assume that $\delta \ge 4$.
Then $G$ is bipartite by Lemma~\ref{lem_gor} (ii), and hence $G$ is of type (a).

\end{proof}


\subsection{Gorenstein Matching Polytopes have the IDP}

We now determine when Gorenstein matching polytopes also have the IDP.
We shall use the following result from the literature:

\begin{lemma}[{\cite[Theorem~8.1]{engstrom2013ideals}}] \label{lemma: almost bipartite}
    The stable-set polytope of an almost bipartite graph has the IDP.
\end{lemma}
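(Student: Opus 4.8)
The plan is to reduce the statement to the already-cited fact that stable-set polytopes of perfect (in particular, bipartite) graphs have the IDP, by ``peeling off'' the one exceptional vertex and splitting $\mathrm{Stab}(G)$ along it. Fix $u \in V(G)$ with $G' := G \setminus u$ bipartite, set $N := N_G(u)$ and $G'' := G[V(G') \setminus N]$ (an induced subgraph of $G'$, hence also bipartite), and write $\mathbbm{1}_S$ for the indicator vector of a set $S$. Put $P_1 := \mathrm{Stab}(G'')$ and $P_2 := \mathrm{Stab}(G')$, both regarded inside $\R^{V(G')}$ (extending $P_1$ by zero on the $N$-coordinates). We may assume $G$ connected and non-bipartite: a disjoint union has $\mathrm{Stab}$ equal to the product of the $\mathrm{Stab}$'s of its components (and the IDP passes to products), and bipartite graphs are perfect, so \cite{ohsugi2001compressed} already applies.

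First I would establish the geometric identity, writing $\R^{V(G)} = \R^{V(G')} \times \R$ with the last coordinate recording the value at $u$:
\[
  \mathrm{Stab}(G) = \conv\!\big( (P_1 \times \{1\}) \cup (P_2 \times \{0\}) \big).
\]
The inclusion $\supseteq$ holds because $T \cup \{u\}$ is a stable set of $G$ whenever $T$ is a stable set of $G''$ (since $T$ avoids $N$), so $P_1 \times \{1\} \subseteq \mathrm{Stab}(G)$, while $P_2 \times \{0\} \subseteq \mathrm{Stab}(G)$ is immediate; for $\subseteq$ it suffices to check the inclusion on the $0/1$ vertices $\mathbbm{1}_S$ of $\mathrm{Stab}(G)$, which lie in $P_2 \times \{0\}$ if $u \notin S$ and in $P_1 \times \{1\}$ if $u \in S$ (because then $S \setminus \{u\}$ avoids $N$ and is stable in $G''$). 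Scaling by an integer $t \ge 0$, this shows that if $\alpha \in t\cdot\mathrm{Stab}(G) \cap \Z^{V(G)}$ and $s := \alpha(u)$, then the restriction $\beta := \alpha|_{V(G')} \in \Z^{V(G')}$ satisfies $\beta \in s P_1 + (t-s) P_2$.

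Next I would reduce the IDP of $\mathrm{Stab}(G)$ to the following lattice refinement of that Minkowski decomposition: \emph{if $\beta \in \big(sP_1 + (t-s)P_2\big) \cap \Z^{V(G')}$, then $\beta = \gamma + \delta$ for some integer points $\gamma \in s P_1$ and $\delta \in (t-s) P_2$.} Granting this, perfectness of bipartite graphs together with the IDP of stable-set polytopes of perfect graphs (\cite{ohsugi2001compressed}; cf.\ the discussion after Proposition~\ref{prop: matchings are stable sets}) lets me write $\gamma = \mathbbm{1}_{T_1} + \cdots + \mathbbm{1}_{T_s}$ with each $T_i$ stable in $G''$ and $\delta = \mathbbm{1}_{S_{s+1}} + \cdots + \mathbbm{1}_{S_t}$ with each $S_j$ stable in $G'$; then $T_1 \cup \{u\}, \dots, T_s \cup \{u\}, S_{s+1}, \dots, S_t$ are stable sets of $G$ whose indicator vectors add up to $\alpha$, which is exactly the IDP.

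To prove the refinement I would use total unimodularity. Since $G'$ is bipartite, $\mathrm{Stab}(G') = \{x \ge 0 : x(v)+x(w) \le 1 \text{ for all } \{v,w\} \in E(G')\}$ --- its constraint matrix, nonnegativity stacked on the vertex--edge incidence matrix of the bipartite graph $G'$, is totally unimodular, so its vertices are $0/1$ and hence stable sets --- and likewise for $G''$. Eliminating $\delta = \beta - \gamma$, the feasible $\gamma$ form the polytope
\[
  \Pi = \left\{ \gamma \in \R^{V(G')} :
  \begin{array}{l}
  0 \le \gamma \le \beta, \quad \gamma|_N = 0,\\
  \gamma(v)+\gamma(w) \le s \text{ for } \{v,w\} \in E(G''),\\
  \gamma(v)+\gamma(w) \ge \beta(v)+\beta(w)-(t-s) \text{ for } \{v,w\} \in E(G')
  \end{array}
  \right\},
\]
whose constraint matrix is assembled from $\pm$ identity rows and the vertex--edge incidence matrix of the bipartite multigraph with edge set $E(G') \sqcup E(G'')$ (every such edge lies in the bipartite graph $G'$), hence is totally unimodular; as $\beta$, $s$, $t-s$ are integers, $\Pi$ is an integral polytope. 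It is nonempty precisely because $\beta \in sP_1 + (t-s)P_2$ exhibits a fractional feasible point, so $\Pi$ contains a lattice point $\gamma$, and $\delta := \beta - \gamma$ completes the split. I expect this last, integral, step to be the main obstacle: the polytopal identity for $\mathrm{Stab}(G)$ is a soft convexity argument, but the IDP requires the lattice point $\beta$ \emph{itself} to be split compatibly between the two bipartite pieces, and that compatibility is exactly the total-unimodularity phenomenon above. Bipartiteness of $G \setminus u$ is precisely what makes it work --- it forces the plain edge description of $\mathrm{Stab}(G')$ and the unimodular constraint matrix; an odd cycle in $G \setminus u$ would reintroduce odd-cycle facet inequalities and break both --- and one should keep an eye on the degenerate cases $N = V(G')$ and $s \in \{0,t\}$, though these only simplify the split.
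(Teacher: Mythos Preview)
The paper does not supply its own proof of this lemma; it is quoted verbatim from \cite{engstrom2013ideals}, where the argument proceeds algebraically by exhibiting a squarefree quadratic initial ideal for the toric ideal of $\mathrm{Stab}(G)$. Your route is genuinely different and more elementary: you recognise $\mathrm{Stab}(G)$ as the Cayley polytope of $\mathrm{Stab}(G')$ and $\mathrm{Stab}(G'')$ over the distinguished coordinate $u$, reduce the IDP to an integer Minkowski-splitting of $\beta$ between $sP_1$ and $(t-s)P_2$, and then discharge that splitting by a total-unimodularity argument on the bipartite incidence matrix. This is a clean polyhedral proof that avoids any commutative algebra, and it makes transparent exactly where ``almost bipartite'' is used (the TU step).

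One small gap to patch: your inequality description $\mathrm{Stab}(G')=\{x\ge 0: x(v)+x(w)\le 1\}$ omits the box constraints $x(v)\le 1$, which are genuine facets at vertices isolated in $G'$ (respectively $G''$); correspondingly your system for $\Pi$ is missing $\gamma(v)\le s$ and $\gamma(v)\ge \beta(v)-(t-s)$. A vertex $v\in V(G'')$ all of whose $G'$-neighbours lie in $N$ is isolated in $G''$ but need not satisfy $\beta(v)\le s$, so without the extra bound an integral vertex of $\Pi$ can fail to lie in $sP_1$. The fix is immediate---append these unit-vector rows; they preserve total unimodularity and the fractional witness $\gamma=sp_1$ still lies in $\Pi$---so the argument stands once this is inserted.
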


By Proposition \ref{prop:stab IDP}, the matching polytopes of bipartite graphs, $K_3$, $K_4$, $K_{1, 1, 2}$, and paths all have the IDP.
Furthermore, by Proposition \ref{prop: matchings are stable sets} $P_M(G) \cong \textrm{Stab}(L(G))$.
By Lemma \ref{lemma: almost bipartite}, since the line graph of a cycle is a cycle, cycles have the IDP.
Thus:

\begin{proposition} \label{big prop}
Graphs of type (a) and (b), as detailed in Theorem \ref{thm:gor}, give rise to matching polytopes with the IDP.
Furthermore, bipartite graphs, $K_3$, $K_4$, $K_{1, 1, 2}$, paths, and cycles give rise to polytopes with the IDP.
\end{proposition}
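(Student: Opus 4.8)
The plan is to route each family of graphs appearing in the statement to one of the three IDP criteria already in hand: Proposition~\ref{prop:stab IDP} (graphs each of whose blocks is bipartite, $K_4$, or $K_{1,1,n}$), Proposition~\ref{prop: matchings are stable sets} (the isomorphism $P_M(G)\cong \textrm{Stab}(L(G))$), and Lemma~\ref{lemma: almost bipartite} (stable-set polytopes of almost bipartite graphs are IDP).

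First I would handle the families that fall under Proposition~\ref{prop:stab IDP}. A graph of type (a) is bipartite (the ``same degree'' hypothesis plays no role for this argument), and every subgraph of a bipartite graph is bipartite, so in particular every block of $G$ is bipartite; hence $P_M(G)$ has the IDP. The same reasoning simultaneously covers arbitrary bipartite graphs and paths (whose blocks are single edges $K_2$). For $K_3$, $K_4$, and $K_{1,1,2}$: each is $2$-connected and therefore equals its own unique block, and $K_3 = K_{1,1,1}$ and $K_{1,1,2}$ are of the form $K_{1,1,n}$ while $K_4$ appears on the list verbatim, so Proposition~\ref{prop:stab IDP} applies in each case.

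It then remains to treat type (b), i.e.\ $C_5$, and more generally every cycle $C_n$. The key point is that the line graph of a cycle is the same cycle, $L(C_n)\cong C_n$, so Proposition~\ref{prop: matchings are stable sets} gives $P_M(C_n)\cong \textrm{Stab}(C_n)$; and a cycle is almost bipartite, since deleting any single vertex of $C_n$ leaves a path, which is bipartite. Lemma~\ref{lemma: almost bipartite} then yields that $\textrm{Stab}(C_n)$, and hence $P_M(C_n)$, has the IDP; specializing to $n=5$ finishes type (b).

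I do not expect a genuine obstacle here: every family in the statement reduces immediately to one of the three cited results. The only places that call for any care are verifying that $L(C_n)$ is again a cycle (adjacency of consecutive edges) and that a vertex-deleted cycle is bipartite---both routine---together with keeping the small finite cases $K_3$, $K_4$, $K_{1,1,2}$ straight as instances of $K_{1,1,n}$ or $K_4$. Note that type (c) graphs, in particular those involving $C'_5$, are deliberately not addressed by this proposition and will require separate arguments later.
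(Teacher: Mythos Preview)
Your proposal is correct and follows essentially the same route as the paper: Proposition~\ref{prop:stab IDP} handles bipartite graphs, paths, $K_3$, $K_4$, and $K_{1,1,2}$, while cycles are treated via $P_M(C_n)\cong \textrm{Stab}(L(C_n))\cong \textrm{Stab}(C_n)$ together with Lemma~\ref{lemma: almost bipartite}. The paper's argument is terser but uses exactly these ingredients in the same way.
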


It turns out that graphs of type (c) also give rise to matching polytopes with the IDP.
We will obtain this result by breaking the graphs of type (c) into manageable graphs that we already know have the IDP.

Let $H$ be a subgraph of $G$, $x \in \R^{E(G)}$, and $X \subseteq \R^{E(G)}$.
We denote the restriction of $x$ to $\R^{E(H)}$ by $x|_H$ and we set $X|_H := \left\{ x|_H \colon x \in X \right\}$.

\begin{lemma}\label{lem_block}
    Let $H$ be a subgraph of $G$.
    Then for every positive integer $t$, $\left(tP_M(G)\right)|_H = tP_M(H)$.
\end{lemma}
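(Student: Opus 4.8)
The plan is to reduce the statement to two standard facts: first, that the $t$-th dilate of a polytope equals its $t$-fold Minkowski sum with itself, and second, that the coordinate projection $\pi\colon \R^{E(G)}\to\R^{E(H)}$ sending $x\mapsto x|_H$ is linear, and therefore commutes with Minkowski sums and with the convex-hull operation. To begin, I would record that
\[
  tP_M(G)=\underbrace{P_M(G)+\cdots+P_M(G)}_{t}=\conv\left\{\, m_1+\cdots+m_t \;\mid\; m_1,\dots,m_t \text{ matchings on } G \,\right\},
\]
where the first equality holds for any convex set and the second uses the identity $\conv(A)+\conv(B)=\conv(A+B)$ for finite sets $A,B$. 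Applying $\pi$ to both sides and moving it inside the convex hull, I obtain
\[
  \bigl(tP_M(G)\bigr)|_H=\conv\left\{\, m_1|_H+\cdots+m_t|_H \;\mid\; m_1,\dots,m_t \text{ matchings on } G \,\right\}.
\]

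The combinatorial heart of the argument is then the identification of the vector sets involved: I claim that $\{\, m|_H : m \text{ a matching on } G \,\}$ and $\{\, n : n \text{ a matching on } H \,\}$ coincide as subsets of $\R^{E(H)}$. Indeed, if $m$ is a matching on $G$, then $m|_H$ is the indicator vector of $m\cap E(H)$, a set of pairwise non-adjacent edges of $H$, hence a matching on $H$; conversely, any matching $n$ on $H$ is a set of pairwise non-adjacent edges of $E(H)\subseteq E(G)$, so it is already a matching on $G$, and its indicator vector in $\R^{E(G)}$ restricts to $n$ in $\R^{E(H)}$. Feeding this equality into the previous display yields
\[
  \bigl(tP_M(G)\bigr)|_H=\conv\left\{\, n_1+\cdots+n_t \;\mid\; n_1,\dots,n_t \text{ matchings on } H \,\right\}=tP_M(H),
\]
the last step being the dilate-equals-Minkowski-sum identity applied to $H$ in place of $G$.

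I do not anticipate a genuine obstacle here; the proof is essentially bookkeeping once the right two lemmas are in hand. The only points deserving a sentence of justification are the set identities $\conv(A)+\conv(B)=\conv(A+B)$ and $\pi(\conv S)=\conv\pi(S)$ for linear $\pi$, each of which follows from writing out convex combinations. It is worth emphasizing that $H$ is allowed to be an arbitrary subgraph of $G$ (with no induced-ness or $2$-connectivity hypothesis), since in the sequel this lemma is applied to the blocks of $G$.
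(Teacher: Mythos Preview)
Your proof is correct and follows essentially the same approach as the paper: the combinatorial core---that restricting a matching on $G$ to $E(H)$ yields a matching on $H$, and conversely that any matching on $H$ extends by zeros to a matching on $G$---is identical. The only cosmetic difference is that the paper first establishes the case $t=1$ and then invokes ``restriction commutes with dilation'' in one line (since $\pi$ is linear, $\pi(tP)=t\,\pi(P)$), whereas you unfold the dilation as a Minkowski sum from the outset; both routes amount to the same linearity observation.
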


\begin{proof}

Let $m$ be a matching on $G$.
Then each vertex of $H$ is incident to at most one edge of $m$, so $m|_H$ is a matching on $H$.

Conversely, let $\tilde{m}$ be a matching on $H$. Define $m \in \R^{E(G)}$ by
\begin{equation*}
    m(e) :=
    \begin{cases}
        \tilde{m}(e) & \textrm{ if } e \in E(H) \\
        0 & \textrm{ if else}.
    \end{cases}
\end{equation*}
Then $m$ is a matching of $G$ such that $m|_H = \tilde{m}$.
It follows that $P_M(G)|_H = P_M(H)$.
Furthermore, restriction commutes with dilation, so $\left( t P_M(G) \right)|_H = t P_M(H)$, as desired.

\end{proof}

\begin{proposition}\label{prop_block}
Let $G$ be a connected graph and let $B_1,\ldots,B_r$ be the blocks of $G$.
Then $P_M(B_i)$ has the IDP for each $i=1,\ldots,r$ if and only if $P_M(G)$ has the IDP.
\end{proposition}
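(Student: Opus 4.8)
The plan is to reduce the IDP of $P_M(G)$ to the IDP of the matching polytopes of its blocks via the restriction operation studied in Lemma~\ref{lem_block}. The key structural fact is that a matching on $G$ is determined by choosing an (arbitrary) matching on each block independently: if $m \subseteq E(G)$ restricts to a matching on every block, then $m$ is itself a matching, since any vertex lying in two blocks is a cut vertex and—by the definition of block decomposition—every edge incident to it lies in exactly one block, so the at-most-one-incident-edge condition can never be violated across blocks. Consequently $P_M(G)$ decomposes as a ``product over blocks'': a point $x \in \mathbb{R}^{E(G)}$ lies in $tP_M(G)$ if and only if $x|_{B_i} \in tP_M(B_i)$ for every $i$, which is exactly Lemma~\ref{lem_block} together with the fact that $E(G) = \bigsqcup_i E(B_i)$ (edges of a graph are partitioned among its blocks). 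I would first record this decomposition explicitly as the engine of the proof.

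For the forward direction (each $P_M(B_i)$ IDP $\Rightarrow P_M(G)$ IDP): take $t \in \mathbb{Z}_{>0}$ and a lattice point $\alpha \in tP_M(G) \cap \mathbb{Z}^{E(G)}$. By the decomposition above, $\alpha|_{B_i} \in tP_M(B_i) \cap \mathbb{Z}^{E(B_i)}$ for each $i$. Applying the IDP of $P_M(B_i)$, write $\alpha|_{B_i} = \alpha_{i,1} + \cdots + \alpha_{i,t}$ with each $\alpha_{i,j} \in P_M(B_i) \cap \mathbb{Z}^{E(B_i)}$ a lattice matching on $B_i$. Now, for each fixed $j \in \{1,\dots,t\}$, glue the pieces $\{\alpha_{i,j}\}_i$ over all blocks into a single vector $\beta_j \in \mathbb{Z}^{E(G)}$ (well-defined since the $E(B_i)$ partition $E(G)$); by the structural fact, $\beta_j$ is a matching on $G$, hence $\beta_j \in P_M(G) \cap \mathbb{Z}^{E(G)}$. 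Summing over $j$ recovers $\alpha$ block-by-block, so $\alpha = \beta_1 + \cdots + \beta_t$ is the desired decomposition.

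The reverse direction ($P_M(G)$ IDP $\Rightarrow$ each $P_M(B_i)$ IDP) is the easier half and follows from Lemma~\ref{lem_block} directly: given $t$ and a lattice point $\tilde\alpha \in tP_M(B_i) \cap \mathbb{Z}^{E(B_i)}$, extend it by zero outside $E(B_i)$ to get a lattice point of $tP_M(G)$ (this extension is the ``conversely'' construction in the proof of Lemma~\ref{lem_block}, and it lies in $tP_M(G)$ because restricting to each block gives either $\tilde\alpha$ or the zero matching), decompose it in $P_M(G)$ by hypothesis, and restrict each summand back to $B_i$; Lemma~\ref{lem_block} guarantees each restricted summand lies in $P_M(B_i) \cap \mathbb{Z}^{E(B_i)}$, and the restricted summands add up to $\tilde\alpha$.

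The only genuinely non-routine point—and the step I would state most carefully—is the structural claim that an assignment of a matching to each block glues to a global matching, i.e.\ that no conflict arises at cut vertices. This is where the precise definition of a block (maximal $2$-connected subgraph, together with the standard fact that distinct blocks share at most one vertex and the edge sets of the blocks partition $E(G)$) is used. Everything else is bookkeeping: restriction and extension-by-zero are mutually inverse on the relevant coordinates, and both commute with dilation, as already exploited in Lemma~\ref{lem_block}.
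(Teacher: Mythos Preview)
Your structural claim---that gluing an arbitrary matching on each block always yields a matching on $G$---is false, and this breaks both the ``product decomposition'' $P_M(G)=\prod_i P_M(B_i)$ and the forward direction of your argument. Take the path $a\!-\!b\!-\!c$: its blocks are the two edges $\{a,b\}$ and $\{b,c\}$, and choosing the unique nonempty matching on each block glues to $\{\{a,b\},\{b,c\}\}$, which is \emph{not} a matching since $b$ is incident to two edges. Correspondingly $P_M(G)$ is the triangle $\conv\{(0,0),(1,0),(0,1)\}$, not the square $[0,1]^2$. Your justification (``every edge incident to a cut vertex lies in exactly one block, so the at-most-one-incident-edge condition can never be violated across blocks'') is exactly backwards: because the edges at a cut vertex are split among several blocks, each block's matching may independently saturate that vertex, and the glued set then saturates it multiply.

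The paper's proof confronts precisely this obstruction. It inducts on the number of blocks, peeling off a block $B_r$ that contains a single cut vertex $u$, and decomposes $x|_{B_r}=y_1+\cdots+y_t$ and $x|_{H}=\gamma_1+\cdots+\gamma_t$ separately. The key extra step you are missing is a \emph{reindexing}: since $x\in tP_M(G)$ forces $\sum_{e\in\iota(u)}x(e)\le t$, the number of $y_i$ saturating $u$ plus the number of $\gamma_j$ saturating $u$ is at most $t$, so one can permute the $\gamma_j$'s so that for each $i$ at most one of $y_i,\gamma_i$ saturates $u$; only then is $\tilde y_i+\tilde\gamma_i$ a genuine matching on $G$. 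Your proposal can be repaired by inserting exactly this pigeonhole/reindexing argument at each cut vertex (and the induction on $r$ is what lets you handle one cut vertex at a time). The reverse direction of your argument is fine and matches the paper's, which phrases it as ``$P_M(B_i)$ is a face of $P_M(G)$''.
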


\begin{proof}
First suppose $P_M(G)$ has the IDP. We would like to show that $P_M(B_i)$ has the IDP for each $i=1,\ldots,r$.
By Lemma \ref{lem_block}:
\begin{equation*}
    F_i := \left\{x\in P_M(G) : \sum_{e\in E(G)\setminus E(B_i)}x(e)=0\right\} \cong P_M(G)|_{B_i} = P_M(B_i).
\end{equation*}
Notice that $F_i$ is a face of $P_M(G)$ and thus has the IDP.
In fact, for $\alpha\in tF_i\cap \Z^{E(G)}$, if there exist $\alpha_1, \ldots, \alpha_t \in P_M(G) \cap \Z^{E(G)}$ with $\alpha = \alpha_1 + \cdots + \alpha_t$, then $\alpha_1,\ldots,\alpha_t$ must lie on $F_i$.
Therefore, $P_M(B_i)$ has the IDP for each $i=1,\ldots,r$, as desired.

Now, suppose $P_M(B_i)$ has the IDP for each $i=1,\ldots,r$.
We prove that $P_M(G)$ has the IDP via induction on $r$.
This result is evident when $r = 1$, so suppose $r \geq 2$.

Without loss of generality, $B_r$ contains just one cut vertex $u$ of $G$.
Let $H:=B_1\cup \cdots \cup B_{r-1}$ and notice that
\begin{equation*}
    V(B_r)\cap V(H)= \{u\} \text{ and } E(B_r)\cap E(H)= \emptyset.
\end{equation*}
Take $x\in tP_M(G)\cap \Z^{E(G)}$.
Let $y := x|_{B_r}$ and $\gamma := x|_H$.
By Lemma \ref{lem_block}:
\begin{align*}
    y &\in tP_M(B_r)\cap\Z^{E\left(B_r\right)}, \\
    \gamma &\in tP_M(H)\cap\Z^{E(H)}.
\end{align*}
By the induction hypothesis, $P_M(B_r)$ and $P_M(H)$ both have the IDP, so there exist matchings $y_1,\ldots,y_t$ on $B_m$ and $\gamma_1,\ldots,\gamma_t$ on $H$ such that:
\begin{align*}
    y&=y_1+\cdots+y_t, \\
    \gamma&=\gamma_1\cdots+\gamma_t.
\end{align*}
Let $\tilde{y}_i$ and $\tilde{\gamma}_i$ be defined as in Lemma \ref{lem_block}.
Notice that $\tilde{y}_i + \tilde{\gamma}_i$ is a matching on $G$ if and only if at most one of $\tilde{y}_i$ and $\tilde{\gamma}_i$ saturates $u$.
Thus, after a change of indices, $\tilde{y}_i+\tilde{\gamma}_i$ is a matching on $G$ for each $i=1,\ldots,t$.
Otherwise we would have:
\begin{equation*}
    \sum_{e\in \iota(v)}x(e)=\sum_{e\in \iota(v)}\left(\sum_{i=1}^t(\tilde{y}_i+\tilde{\gamma}_i)(e)\right)>t,
\end{equation*}
a contradiction.
Therefore, we can write:
\begin{equation*}
    x=(\tilde{y}_1+\tilde{\gamma}_1)+\cdots+(\tilde{y}_t+\tilde{\gamma}_t),
\end{equation*}
so $P_M(G)$ has the IDP.
\end{proof}

Proposition \ref{big prop} tells us that bipartite blocks, $K_3$, $K_4$, and $K_{1, 1, 2}$ give rise to matching polytopes with the IDP.
Therefore, to show that graphs in class (c) give rise to matching polytopes with the IDP, it remains only to show that the chortling cycle $C'_5$ gives rise to a matching polytope with the IDP.

\begin{remark}
One can use a computational algebra system to confirm that $P_M(C'_5)$ has the IDP, as we did using {\tt MAGMA} \cite{MAGMA}.
We provide a proof that $P_M(C'_5)$ has the IDP for the sake of completion and in order to introduce techniques that can be used to show other matching polytopes (which {\tt MAGMA} may be unable to process) also have the IDP.
\end{remark}

We continue by introducing the following seviceable definitions:

\begin{definition} \label{def: odd structure}
    Let $G$ be a connected graph.
    An \emph{odd structure} in $G$ is a $2$-connected, induced, factor-critical subgraph $U \subseteq G$.
    
    A \emph{$t$-matching} on $G$ is an assignment $x(e)$ of integer weights to the edges $e$ of $G$ such that:
    \begin{align*}
        &\sum_{e \in \iota(u)} x(e) \leq t, \;\; \textrm{ for every } u \in G \textrm{ and} \\
        &\sum_{e \in E(U)} x(e) \leq t \left(\frac{|U|-1}{2}\right), \;\; \textrm{ for every } \textrm{odd structure } U \subseteq G.
    \end{align*}

    A $t$-matching $x$ on $G$ \emph{splits} if there exists a $1$-matching $m$ and a $(t-1)$-matching $y$ both on $G$ such that:
    \begin{equation*}
        x = m + y.
    \end{equation*}

    Let $x$ be a $t$-matching, and let $u \in W_n$. 
    The \emph{weight} of $u$ (under $x$) is:
    \begin{equation*}
        x(u) := \sum_{e \in \iota(u)} x(e).
    \end{equation*}
    Similarly, the weight of $U \subseteq W_n$ is:
    \begin{equation*}
        x(U) := \sum_{e \in E(U)} x(e).
    \end{equation*}
    The \emph{index} $I(U)$ of $U$ is given by:
    \begin{equation*}
        x(U):=
        \begin{cases}
            I(U) + (t-1) \times \frac{|U|-1}{2}, & \text{ when } |U| \text{ is odd}, \\
            I(U) + (t-1) \times \frac{|U|}{2}, & \text{ when } |U| \text{ is even}.
        \end{cases}
    \end{equation*}
    Additionally:
    \begin{itemize}
        \item We say $u \in W_n$ is \emph{tight} if $x(u) = t$.
        Analogously, we say $U \subseteq W_n$ is tight if $I(U) > 0$.

        \item  When $|U|$ is odd, if $I(U) = \frac{|U|-1}{2}$ we say that $U$ is \emph{full index}; otherwise we say that $U$ is \emph{partial index}.

        \item If $u \in W_n$ or $U \subseteq W_n$ is not tight, it is \emph{loose}.
    \end{itemize}

    A $t$-matching $x$ is \emph{degenerate} if $x(e) > 0$ for some $e \in E(G)$. Otherwise, $x$ is \emph{nondegenerate}.
    
    Let $x$ be a $t$-matching on $G$.
    Then the subgraph $G[x]$ of $G$ \emph{induced} by $x$ is the largest subgraph $H$ of $G$ such that $x|_{E(H)}$ is nondegenerate.
    In particular:
    \begin{align*}
        V(G[x]) &= V(G), \\
        E(G[x]) &= \left\{ e \in E(G) \colon x(e) > 0 \right\}.
    \end{align*}
\end{definition}

Note that $t$-matchings on $G$ correspond to integer points in $t P_M(G)$, and vice versa.
Furthermore, $P_M(G)$ has no interior lattice points, so $1$-matchings are literal matchings.
We will henceforth use the two terms interchangeably.

\begin{lemma} \label{lem: Inductive Method}
    Let $G$ be a graph, and suppose that for every subgraph $H \subseteq G$, every nondegenerate matching on $H$ splits. Then $P_M(G)$ has the IDP.
\end{lemma}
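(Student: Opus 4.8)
The plan is to prove, by induction on $t$, the following slightly stronger statement: for every subgraph $H \subseteq G$, every $t$-matching on $H$ decomposes as a sum of $t$ ordinary matchings on $H$. Since (as noted after Definition~\ref{def: odd structure}) the $t$-matchings on $H$ are precisely the lattice points of $tP_M(H)$ while ordinary matchings are the lattice points of $P_M(H)$, specializing this to $H=G$ is exactly the assertion that $P_M(G)$ has the IDP. The base cases $t\le 1$ are immediate, since $1$-matchings are literal matchings.

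For the inductive step, fix a subgraph $H\subseteq G$ and a $t$-matching $x$ on $H$ with $t\ge 2$, and pass to the induced subgraph $H':=H[x]$, on which $x$ is nondegenerate. The first thing I would verify is that $x|_{E(H')}$ is genuinely a $t$-matching on $H'$: the vertex inequalities are inherited because $\iota_{H'}(u)\subseteq\iota_H(u)$, and for the odd-structure inequalities one observes that if $U$ is an odd structure of $H'$, then $H[V(U)]$ is an odd structure of $H$ — it is induced in $H$ by construction, and adjoining edges to the $2$-connected, factor-critical graph $U$ preserves both properties. Since $x$ vanishes on $E(H)\setminus E(H')$, the $x$-weight of $U$ equals that of $H[V(U)]$, so the required bound $x(U)\le t\frac{|U|-1}{2}$ follows from the corresponding bound for $H[V(U)]$ as an odd structure of $H$.

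Now $x|_{E(H')}$ is a nondegenerate $t$-matching on the subgraph $H'\subseteq G$, so the hypothesis of the lemma applies and $x|_{E(H')}$ splits as $m+y$, where $m$ is a matching on $H'$ and $y$ is a $(t-1)$-matching on $H'$. Applying the induction hypothesis to the subgraph $H'\subseteq G$, the $(t-1)$-matching $y$ is a sum $m_2+\cdots+m_t$ of $t-1$ matchings on $H'$. Extending $m,m_2,\dots,m_t$ by zero to $E(H)$ yields matchings on $H$, and since $x$ also vanishes on $E(H)\setminus E(H')$ we obtain $x=m+m_2+\cdots+m_t$, closing the induction. The routine-but-load-bearing point, and the one I would be most careful with, is the reduction to $H'=H[x]$: one must confirm that restricting to the support of $x$ does not destroy the $t$-matching inequalities, which rests entirely on the observation that an odd structure of a subgraph lifts to an odd structure of the ambient graph on the same vertex set and with the same $x$-weight. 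Once that is in hand, the split provided by the hypothesis does the real work, and quantifying over all subgraphs of $G$ is exactly what allows the recursion to close.
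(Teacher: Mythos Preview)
Your proof is correct and follows essentially the same approach as the paper's: pass to the support subgraph $H[x]$ where $x$ becomes nondegenerate, apply the hypothesis there to split off a matching, and induct on $t$. The paper's version is terser, asserting without elaboration that ``a $t$-matching $x$ on $G$ splits if and only if $x|_{G[x]}$ splits,'' whereas you carefully justify the load-bearing point that an odd structure of $H[x]$ lifts to an odd structure of $H$ with the same $x$-weight; this is exactly the content the paper leaves implicit.
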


\begin{proof}
    We want to prove every $t$-matching on $G$ can be written as the sum of $t$ matchings.
    By the principles of induction, it suffices to show every $t$-matching on $G$ splits.
    A $t$-matching $x$ on $G$ splits if and only if $x|_{G[x]}$ splits.
    Since $x|_{G[x]}$ is nondegenerate, it splits by assumption, and the result follows as desired.
\end{proof}

\begin{corollary} \label{cor: Inductive Method}
    Let $G$ be a graph, and suppose every proper subgraph of $G$ gives rise to a matching polytope with the IDP.
    If every nondegenerate $t$-matching on $G$ splits, then $P_M(G)$ has the IDP.
\end{corollary}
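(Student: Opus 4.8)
The plan is to reduce the statement to Lemma~\ref{lem: Inductive Method}. That lemma asks that for \emph{every} subgraph $H \subseteq G$ every nondegenerate $t$-matching on $H$ splits, whereas here the splitting property is handed to us only for $G$ itself, together with the IDP for all proper subgraphs. The bridge between the two is the elementary remark that \emph{if $P_M(H)$ has the IDP, then every $t$-matching on $H$ splits}. Indeed, a $t$-matching on $H$ is a lattice point of $tP_M(H)$, so by the IDP it can be written as $m_1 + \cdots + m_t$ with each $m_i$ a matching on $H$; setting $m := m_1$ and $y := m_2 + \cdots + m_t$, the vector $y$ is a lattice point of $(t-1)P_M(H)$ by convexity, hence a $(t-1)$-matching, so $x = m + y$ exhibits a split.

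With this in hand I would verify the hypothesis of Lemma~\ref{lem: Inductive Method} for every subgraph $H \subseteq G$. If $H = G$, a nondegenerate $t$-matching on $G$ splits by the hypothesis of the corollary. If $H \subsetneq G$, then $P_M(H)$ has the IDP by hypothesis, so by the remark above \emph{every} $t$-matching on $H$, in particular every nondegenerate one, splits. Thus Lemma~\ref{lem: Inductive Method} applies and $P_M(G)$ has the IDP.

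An essentially equivalent but more self-contained argument proceeds by induction on the dilation factor $t$, which makes the use of Lemma~\ref{lem_block} explicit. The case $t = 1$ is immediate, since $1$-matchings are matchings. For $t \geq 2$, take a $t$-matching $x$ on $G$ and split on whether $x$ is degenerate. If $x$ is nondegenerate, the corollary's hypothesis gives $x = m + y$ with $m$ a matching and $y$ a $(t-1)$-matching on $G$; apply the inductive hypothesis to decompose $y$ into $t-1$ matchings and append $m$. If $x$ is degenerate, then $G[x]$ is a proper subgraph of $G$, and Lemma~\ref{lem_block} identifies $x|_{G[x]}$ with a lattice point of $tP_M(G[x])$; since $P_M(G[x])$ has the IDP, this restriction decomposes into $t$ matchings on $G[x]$, which extend by zero (as in Lemma~\ref{lem_block}) to $t$ matchings on $G$ summing to $x$.

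I do not anticipate a real obstacle here; the only points needing care are that restriction to a subgraph yields a genuine $t$-matching rather than merely a weight assignment---exactly the content of Lemma~\ref{lem_block} together with the correspondence between $t$-matchings and lattice points of dilates---and that the degenerate case must invoke the proper-subgraph IDP hypothesis rather than a splitting hypothesis that is not available there.
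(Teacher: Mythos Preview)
Your proposal is correct, and your first argument---reducing to Lemma~\ref{lem: Inductive Method} via the observation that IDP for $P_M(H)$ implies every $t$-matching on $H$ splits---is precisely the route the paper intends; the corollary is stated without proof immediately after that lemma. Your second, self-contained argument is a welcome unpacking of the same idea and is also fine.
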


We will use this corollary to show $P_M(C'_5)$ has the IDP.
Recall, by Proposition \ref{big prop}, if the largest cycle in a graph $G$ is of length less than $5$, then $P_M(G)$ has the IDP.
It follows that almost every proper subgraph of $C'_5$ gives rise to a matching polytope with the IDP.
We treat the remaining case with the following lemma:

\begin{lemma} \label{Sublemma}
    Let $C^*_5$ denote the cycle of length $5$ with one chord. 
    Then $P_M(C^*_5)$ has the IDP.
\end{lemma}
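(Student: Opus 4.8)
The plan is to apply Corollary~\ref{cor: Inductive Method}, which reduces the lemma to two claims: every proper subgraph of $C^*_5$ has a matching polytope with the IDP, and every nondegenerate $t$-matching on $C^*_5$ splits.

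For the first claim I would argue as follows. The graph $C^*_5$ has only six edges, and its unique $5$-cycle is the rim; hence the only proper subgraph that still contains a $5$-cycle is $C_5$ itself, obtained by deleting the chord, which has the IDP by Proposition~\ref{big prop}. Every other proper subgraph has all cycles of length at most $4$, so each of its blocks is an edge, a triangle, or a $4$-cycle. Each such block is bipartite or $K_3$, hence has the IDP, and Proposition~\ref{prop_block} transfers this to the subgraph.

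For the second claim I would first pin down the facet structure of $C^*_5$. The chord cuts off a triangle $T$ on three vertices, the other two vertices of $T$ having degree $3$ and the third having degree $2$; checking the $2$-connected induced subgraphs shows that the only odd structures are $T$ itself and all of $C^*_5$ (the induced $4$-cycle on the remaining four vertices is not factor-critical, so it contributes no inequality). Writing $e$ for the chord and $e',e''$ for the two rim edges of $T$, every $t$-matching $x$ then satisfies $x(e')+x(e'')+x(e)\le t$ and, summing over all six edges, $x(E(C^*_5))\le 2t$, in addition to the degree inequalities at the four essential vertices (the two chord endpoints and the two rim vertices outside $T$; the remaining vertex of $T$ is not essential). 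The key observation is that if $x$ is nondegenerate then $x(e)\ge 1$, so $x(e')+x(e'')\le t-x(e)\le t-1$; thus the non-essential vertex of $T$ is automatically loose, and no case analysis is needed. I would then peel off the matching $m$ consisting of the chord $e$ together with the rim edge joining the two degree-$2$ vertices outside $T$; these two edges are vertex-disjoint, so $m$ is a matching, it saturates all four essential vertices, and it uses the triangle edge $e$. Checking that $y:=x-m$ is a $(t-1)$-matching is then routine: nonnegativity comes from nondegeneracy; each essential vertex loses exactly one incident unit, so its weight is at most $t-1$; the one vertex unsaturated by $m$ is the loose non-essential vertex of $T$; the weight of $T$ drops by one; and $x(E(C^*_5))$ drops by two, staying within $2(t-1)$ since it was at most $2t$. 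Therefore $x=m+y$ splits, and Corollary~\ref{cor: Inductive Method} completes the proof.

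The argument is short, and the only step requiring real care is the classification of odd structures in the second claim: one must confirm that, besides $T$ and $C^*_5$ itself, no induced subgraph of $C^*_5$ is simultaneously $2$-connected and factor-critical, since an overlooked odd structure could invalidate the verification that $y$ is a genuine $(t-1)$-matching.
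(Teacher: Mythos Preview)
Your argument is correct and takes a genuinely different route from the paper's. The paper never puts the chord into the split-off matching: it runs a case analysis on which of the two degree-$3$ ``shoulder'' vertices are tight, choosing $\{a,a'\}$ (the two rim edges opposite the triangle) when both shoulders are tight and $\{b,c\}$ otherwise, and must then argue in each subcase that the triangle and the top vertex are loose enough for the chosen matching to work. You instead exploit nondegeneracy directly: since the chord $e$ carries weight at least $1$, the single matching $\{e,\text{opposite rim edge}\}$ works uniformly---it saturates all four essential vertices, meets the triangle, and drops the total weight by $2$. The enabling observation, which the paper does not use, is that the apex of the triangle is \emph{not} essential (its two neighbours are adjacent via the chord), so there is no facet inequality at that vertex for $m$ to address. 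This buys you a one-line verification in place of the paper's case split. One cosmetic remark: Proposition~\ref{prop_block} is stated only for connected $G$, so for disconnected proper subgraphs you are tacitly using that $P_M$ of a disjoint union is a product and that products of IDP polytopes are IDP; this is routine but could be said in a word.
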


\begin{proof}

    By Proposition \ref{big prop}, every proper subgraph of $C^*_5$ gives rise to a matching polytope with the IDP.
    Thus, by Corollary \ref{cor: Inductive Method}, it suffices to show every nondegenerate $t$-matching $x$ on $C^*_5$ splits.

    Let $x$ be a nondegenerate $t$-matching on $C^*_5$, and adopt the notation depicted in Figure \ref{fig:c5*}.
    We want to find a matching $m$ on $C^*_5$ such that $x-m$ is a $(t-1)$-matching on $C^*_5$.

    Suppose first that both shoulders of $C^*_5$ are tight.
    If both feet are tight, neither the top vertex nor the top triangle can be tight - otherwise the sum over all edge weights is at least $t$ (from the top) plus $2t-x(c)$ (from the two feet), which is strictly greater than $2t$.
    (We cannot have $x(c) = t$, because this would imply $a = 0$.)
    This contradicts the odd cycle condition enforced collectively by all the vertices in $C^*_5$.
    Thus, in this case we may choose $\{a, a'\}$ as our matching.

    Now, suppose without loss of generality that the left shoulder of $C^*_5$ is not tight.
    Then we may choose $\{b, c\}$ as our matching.
    
    In all cases, we may write (via induction) $x$ as the sum of $t$ matchings on $C^*_5$. It follows that $C^*_5$ has the IDP, as desired.
\end{proof}

\begin{lemma} \label{lem: Chortling Cycle}
    The matching polytope $P_M(C'_5)$ has the IDP.
\end{lemma}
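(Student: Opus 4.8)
The plan is to apply Corollary~\ref{cor: Inductive Method}. First, every proper subgraph of $C'_5$ gives rise to a matching polytope with the IDP: such a subgraph is either a subgraph of $C^*_5$, in which case its matching polytope is a face of $P_M(C^*_5)$ and hence has the IDP by Lemma~\ref{Sublemma}, or it has all cycles of length less than $5$, in which case Proposition~\ref{big prop} applies. So by Corollary~\ref{cor: Inductive Method} it suffices to show that every nondegenerate $t$-matching $x$ on $C'_5$ splits, i.e., to produce a matching $m$ on $C'_5$ with $x - m$ a $(t-1)$-matching.

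I would use the labeling from the definition of $C'_5$, writing $e_{i,j}$ for the edge $\{i,j\}$ and taking $T_1 = C'_5[\{1,2,3\}]$ and $T_2 = C'_5[\{2,3,4\}]$ to be the two triangles; together with $C'_5$ itself, these are its only odd structures (no $4$-vertex subgraph can be factor-critical). The key observation is that $T_1$ and $T_2$ share the edge $e_{2,3}$, so $m := \{e_{0,1}, e_{2,3}\}$ is a matching that saturates $\{0,1,2,3\}$ and contains an edge of each triangle; the only vertex it misses is $4$, and symmetrically $\{e_{0,4}, e_{2,3}\}$ does the same with $1$ in place of $4$. Thus everything reduces to showing that vertices $1$ and $4$ are never simultaneously tight. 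This holds because $1$ and $4$ are non-adjacent, so $\iota(1)$ and $\iota(4)$ are disjoint with union $E(C'_5) \setminus \{e_{2,3}\}$, giving
\[
x(1) + x(4) = x(C'_5) - x(e_{2,3}) \le 2t - 1,
\]
where I use $x(C'_5) \le 2t$ and $x(e_{2,3}) \ge 1$ (by nondegeneracy). Hence $x(1) \le t-1$ or $x(4) \le t-1$, and using the automorphism of $C'_5$ that swaps $1 \leftrightarrow 4$ and $2 \leftrightarrow 3$ and fixes $0$, I may assume $x(4) \le t-1$.

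With $m = \{e_{0,1}, e_{2,3}\}$, I would then check that $x - m$ is a $(t-1)$-matching. It is nonnegative, since $x(e) \ge 1 = m(e)$ on the two edges of $m$ and $m$ vanishes elsewhere. For each vertex, $(x-m)(u) = x(u)-1 \le t-1$ when $u \in \{0,1,2,3\}$, while $(x-m)(4) = x(4) \le t-1$; for each triangle, $(x-m)(T_i) = x(T_i) - 1 \le t-1 = (t-1)\frac{|T_i|-1}{2}$; and $(x-m)(C'_5) = x(C'_5) - 2 \le 2t-2 = (t-1)\frac{|C'_5|-1}{2}$. So $x-m$ satisfies every inequality defining a $(t-1)$-matching, $x = m + (x-m)$ is a splitting, and Corollary~\ref{cor: Inductive Method} yields that $P_M(C'_5)$ has the IDP.

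The only real obstacle is to see that the apparently extensive case analysis --- five vertex inequalities, two triangle inequalities, and the global inequality of $C'_5$ --- collapses entirely once one routes a single size-$2$ matching through the common edge $e_{2,3}$ of the two triangles. Such an $m$ takes care of both triangle constraints and the global constraint automatically, so only the vertex inequality at the one unsaturated vertex survives, and it is controlled by the elementary identity $x(1) + x(4) = x(C'_5) - x(e_{2,3})$.
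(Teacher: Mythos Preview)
Your proof is correct and takes a cleaner route than the paper's. Both arguments begin identically, reducing via Corollary~\ref{cor: Inductive Method} and Lemma~\ref{Sublemma} to the task of splitting an arbitrary nondegenerate $t$-matching on $C'_5$. From there the paper proceeds by a case analysis on whether the degree-$2$ vertex (your vertex~$0$) is tight, selecting a different two-edge matching in each sub-case and checking separately which ``feet'' and which triangles are tight. You instead pivot on the single structural observation that the two triangles $T_1$ and $T_2$ share the edge $e_{2,3}$, so that any size-$2$ matching containing $e_{2,3}$ automatically handles both triangle constraints and the global $C'_5$ constraint simultaneously; only the vertex inequality at the one unsaturated vertex ($1$ or $4$) survives, and the identity $x(1)+x(4)=x(C'_5)-x(e_{2,3})\le 2t-1$ disposes of it in one line. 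Your argument is shorter and more conceptual --- it isolates exactly why $C'_5$ is easy, namely that a single edge lies in every odd structure except $C'_5$ itself --- whereas the paper's case analysis, while correct, obscures this.
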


\begin{proof}
    By Lemma \ref{Sublemma} and Proposition \ref{big prop}, every proper subgraph of $C'_5$ gives rise to a matching polytope with the IDP.
    Thus, by Corollary \ref{cor: Inductive Method}, it suffices to show every nondegenerate $t$-matching $x$ on $C^*_5$ splits.

    Let $x$ be a nondegenerate $t$-matching on $C'_5$ and adopt the notation depicted in Figure \ref{fig:c5'}. 
    We want a matching $m$ on $C'_5$ such that $x-m$ is a $(t-1)$-matching on $C'_5$.

    Suppose first that the top vertex is tight.
    Then at most one foot and one triangle can be tight, otherwise the sum over all edge weights is at least $t$ (from the top) plus $2t-x(c)$ (from the bottom), which as in the proof of Lemma \ref{Sublemma} leads to contradiction.
    Without loss of generality, the right foot is not tight.
    If the right triangle is loose, we may choose $\{a, a'\}$ as our matching.
    If the left is loose, we may choose $\{b, b'\}$ instead.

    Now suppose the top vertex is not tight. 
    Then we may choose $\{b, c\}$ as our matching.
    As in the previous lemma, our result follows as desired.
\end{proof}

\begin{figure}[H]
    \centering
    \begin{minipage}{0.47\textwidth}
        \centering
        \begin{tikzpicture}
            \foreach \i/\color in {1/blue, 2/green, 3/green, 4/blue, 5/red} {
                \node[draw, circle, minimum size=8pt, inner sep=0pt, fill=\color] (P\i) at ({90+72*\i}:1.5) {};
            }
            \foreach \i in {1,...,5} {
                \pgfmathtruncatemacro{\nexti}{mod(\i,5)+1}
                \draw (P\i) -- (P\nexti);
            }
            \draw (P1) -- (P4);

            \coordinate (E1) at ($ (P5)!0.5!(P4) $);
            \coordinate (E2) at ($ (P4)!0.5!(P3) $);
            \coordinate (E3) at ($ (P3)!0.5!(P2) $);
            \coordinate (E4) at ($ (P2)!0.5!(P1) $);

            \node at (E1) [above right] {$b$};
            \node at (E2) [right] {$a'$};
            \node at (E3) [below] {$c$};
            \node at (E4) [left] {$a$};

            \node[red] at (3, 1.3) {$\cdots$ Top};
            \node[blue] at (3, 0.5) {$\cdots$ Shoulders};
            \node[green] at (3, -1.2) {$\cdots$ Feet};
        \end{tikzpicture}
        \caption{The graph $C_5^*$}
        \label{fig:c5*}
    \end{minipage}
    \begin{minipage}{0.47\textwidth}
        \centering
        \begin{tikzpicture}
            \foreach \i/\color in {1/blue, 2/green, 3/green, 4/blue, 5/red} {
                \node[draw, circle, minimum size=8pt, inner sep=0pt, fill=\color] (P\i) at ({90+72*\i}:1.5) {};
            }
            \foreach \i in {1,...,5} {
                \pgfmathtruncatemacro{\nexti}{mod(\i,5)+1}
                \draw (P\i) -- (P\nexti);
            }
            \draw (P1) -- (P3);
            \draw (P2) -- (P4);

            \coordinate (E1) at ($ (P5)!0.5!(P4) $);
            \coordinate (E2) at ($ (P4)!0.5!(P2) $);
            \coordinate (E3) at ($ (P3)!0.5!(P1) $);
            \coordinate (E4) at ($ (P2)!0.5!(P1) $);
            \coordinate (E5) at ($ (P5)!0.5!(P1) $);

            \node at (E1) [above right] {$a'$};
            \node at (E2) [above] {$b$};
            \node at (E3) [above] {$c$};
            \node at (E4) [left] {$a$};
            \node at (E5) [above left] {$b'$};

            \node[red] at (3, 1.3) {$\cdots$ Top};
            \node[blue] at (3, 0.5) {$\cdots$ Shoulders};
            \node[green] at (3, -1.2) {$\cdots$ Feet};
        \end{tikzpicture}
        \caption{The graph $C'_5$}
        \label{fig:c5'}
    \end{minipage}
\end{figure}

\begin{corollary}\label{cor:gorenstein_idp}
    Let $G$ be a connected graph.
    If $P_M(G)$ is Gorenstein, then it has the IDP.
\end{corollary}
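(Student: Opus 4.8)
The plan is to combine the structural classification of Gorenstein matching polytopes (Theorem~\ref{thm:gor}) with the block-reduction principle (Proposition~\ref{prop_block}) and the IDP facts already established for the relevant ``building-block'' graphs. Concretely, assume $P_M(G)$ is Gorenstein. By Theorem~\ref{thm:gor}, $G$ must be of type (a), (b), or (c). In the first two cases we are done immediately: Proposition~\ref{big prop} states precisely that graphs of type (a) (bipartite graphs with equidegree essential vertices) and type (b) ($C_5$) give rise to matching polytopes with the IDP.

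The remaining work is entirely concentrated in type (c): a connected graph whose essential vertices all have degree $3$ and each of whose blocks is bipartite, $K_3$, $K_4$, $K_{1,1,2}$, or $C'_5$. Here I would invoke Proposition~\ref{prop_block}, which says $P_M(G)$ has the IDP if and only if $P_M(B_i)$ has the IDP for every block $B_i$ of $G$. So it suffices to check the IDP for each admissible block type individually. For bipartite blocks, $K_3$, $K_4$, and $K_{1,1,2}$, the IDP is supplied by Proposition~\ref{big prop} (via Proposition~\ref{prop:stab IDP}, since these are exactly the blocks for which the line graph is perfect, together with $K_3$ treated as a bipartite-free small case). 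For the chortling cycle $C'_5$, the IDP is exactly the content of Lemma~\ref{lem: Chortling Cycle}. Assembling these, every block of a type-(c) graph has the IDP, so by Proposition~\ref{prop_block} the whole polytope $P_M(G)$ does too.

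In short, the corollary is a bookkeeping consequence of results already in hand, and no genuinely new argument is needed. If there is any subtlety, it is only in making sure the case list in Theorem~\ref{thm:gor} is matched correctly against the IDP results: type (a) and (b) are handled directly by Proposition~\ref{big prop}, while type (c) is handled block-by-block through Proposition~\ref{prop_block}, with the single nontrivial block $C'_5$ covered by Lemma~\ref{lem: Chortling Cycle} (whose proof in turn rested on Lemma~\ref{Sublemma} and Corollary~\ref{cor: Inductive Method}). The ``hard part'' was therefore really Lemma~\ref{lem: Chortling Cycle}, already established above; the corollary itself is a clean synthesis of the section's machinery.
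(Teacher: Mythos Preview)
Your proposal is correct and follows essentially the same approach as the paper: reduce via Theorem~\ref{thm:gor} to types (a), (b), (c); dispatch (a) and (b) by Proposition~\ref{big prop}; and handle (c) block-by-block via Proposition~\ref{prop_block}, with Proposition~\ref{big prop} covering bipartite, $K_3$, $K_4$, $K_{1,1,2}$ and Lemma~\ref{lem: Chortling Cycle} covering $C'_5$. The paper's proof is exactly this synthesis, so there is nothing to add.
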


\begin{proof}
    Theorem~\ref{thm:gor} tells us that $G$ is of form (a), (b) or (c).
    Proposition \ref{big prop} tells us that if $G$ is of form (a) or (b), then $P_M(G)$ gives rise to a matching polytope with the IDP.
    Thus, we may assume $G$ is of form (c).

    To show that $G$ has the IDP, it suffices to show that every block of $G$ gives rise to a matching polytope with the IDP (Proposition \ref{prop_block}).
    Since $G$ is of form (c), each block of $G$ is either bipartite, $K_3$, $K_4$, $K_{1, 1, 2}$, or $C'_5$.
The matching polytopes of bipartite graphs, $K_3$, $K_4$ and $K_{1,1,2}$ have the IDP by Proposition~\ref{big prop}.
    The matching polytope of $C'_5$ has the IDP by Lemma~\ref{lem: Chortling Cycle}.
    The result follows, as desired.
\end{proof}


\section{Wheel graphs are not Gorenstein, but have the IDP }
\label{sec:wheel_graphs_IDP}

In Section \ref{sec:gorenstein}, we characterized Gorenstein matching polytopes and proved they have the IDP.
We found that, to prove a matching polytope $P_M(G)$ has the IDP, it suffices to split every $t$-matching $x$ on $G$ into a matching $m$ and a $(t-1)$-matching $y$.
It is generally challenging to find good candidates for $m$ when some $e \in E(G)$ has $x(e) = 0$.
However, in this scenario, it suffices to instead show $P_M\left( G \setminus e \right)$ has the IDP - an easier task.

Unfortunately, this inductive method (Corollary \ref{cor: Inductive Method}) does not suffice to characterize the matching polytopes with IDP.
For instance, more advanced techniques are required to show that the matching polytope of the wheel graph has the IDP.
Notice first, that matching polytopes of wheel graphs are generally not Gorenstein:

\begin{proposition}
    Let $n \geq 5$. Then $P_M (W_n)$ is not Gorenstein.
\end{proposition}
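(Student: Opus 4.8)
The plan is to apply the Gorenstein characterization of Theorem~\ref{thm:gor} directly: since $W_n$ is a connected graph, $P_M(W_n)$ is Gorenstein if and only if $W_n$ is of type (a), (b), or (c). So it suffices to check that for $n \geq 5$ the wheel graph $W_n$ is none of these.

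First I would record the relevant degree data. In $W_n$ the central vertex $v$ has degree $n-1 \geq 4$, and each rim vertex has degree $3$ (two rim neighbors and one spoke). Every vertex of $W_n$ has degree at least $3$, so every vertex is essential by condition (3) of Definition~\ref{def: essential vertex, factor critical}. In particular the essential vertices do not all share a common degree once $n - 1 \neq 3$, i.e., once $n \geq 5$: the hub has degree $n-1$ while a rim vertex has degree $3$. This immediately rules out types (a) and (c), both of which require all essential vertices to have the same degree (and type (c) moreover forces that common degree to be $3$). It also rules out type (b), since $W_n \neq C_5$ for $n \geq 5$ (indeed $W_n$ is not even a cycle, as it contains the vertex $v$ of degree $\geq 4$). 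Hence $W_n$ is not of type (a), (b), or (c), and by Theorem~\ref{thm:gor}, $P_M(W_n)$ is not Gorenstein.

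There is essentially no obstacle here: the statement is a short corollary of Theorem~\ref{thm:gor}, and the only thing to verify carefully is that every vertex of $W_n$ is essential (so that the ``same degree'' hypothesis genuinely applies to both $v$ and the rim vertices) and that $n - 1 > 3$ when $n \geq 5$. Alternatively, for a self-contained argument bypassing the full classification, I could invoke Lemma~\ref{lemma:gor} and observe that condition (C1) fails: since all vertices are essential, (C1) would force $\deg_G(v) = \deg_G(i)$, i.e. $n - 1 = 3$, contradicting $n \geq 5$. Either route gives the result in a few lines.
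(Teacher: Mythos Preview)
Your proof is correct and follows essentially the same approach as the paper: both invoke Theorem~\ref{thm:gor}, observe that every vertex of $W_n$ has degree at least $3$ and hence is essential, and then note that the hub and rim vertices have different degrees once $n\geq 5$, ruling out types (a) and (c), while $W_n\neq C_5$ rules out type (b). Your alternative via Lemma~\ref{lemma:gor} and condition (C1) is a minor rephrasing of the same observation.
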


\begin{proof}
    By Theorem \ref{thm:gor}, $P_M (W_n)$ is Gorenstein if and only if one of the following holds:
    \begin{enumerate}
        \item $W_n$ is a bipartite graph whose essential vertices have the same degree; \label{point 1}
        \item $W_n = C_5$; \label{point 2}
        \item All essential vertices of $W_n$ have the same degree, and each block of $W_n$ is either $K_3$, $K_4$, $K_{1, 1, 2}$, or $C'_5$. \label{point 3}
    \end{enumerate}
    Since every vertex in $W_n$ has degree at least $3$, every vertex is essential as in Definition \ref{def: essential vertex, factor critical}. 
    For $n \geq 5$, the vertices on the outer cycle of $W_n$ all have degree $3$, but the central vertex has degree $n-1 \neq 3$.
    Thus $P_M (W_n)$ does not satisfy (\ref{point 1}) or (\ref{point 3}).
    Clearly, $W_n \neq C_5$, so $P_M (W_n)$ does not satisfy (\ref{point 2}) either.
    It follows that $P_M (W_n)$ is not Gorenstein.
\end{proof}

Nevertheless, by the end of this section we prove that the matching polytope of the wheel graph has the IDP. 

By Lemma \ref{lem: Inductive Method}, it suffices to show every nondegenerate $t$-matching $x$ on each subgraph $H$ of $W_n$ splits.
To this end, let $H$ be a subgraph of $W_n$.
We characterize the $t$-matchings on $H$.
By Definition \ref{def: odd structure}, it suffices to characterize the odd structures in $H$.
With this in mind, we introduce the following definitions:

\begin{definition} \label{def: slices}
    Let $v$ and $\Delta$ denote the central vertex the outer cycle of $W_n$, respectively.
    A \emph{slice} $S$ of $H$ is a $2$-connected induced subgraph of $H$ containing $v$.
    A slice $S$ is \emph{even} if $|S|$ is even; otherwise $S$ is \emph{odd}.

    Let $O(S) := S \cap \Delta$ denote the \emph{boundary} of $S$.
    Note that $O(S)$ is either a path or a cycle.
    Two slices $S$ and $T$ are \emph{almost disjoint} if:
    \begin{equation*}
        O(S) \cap O(T) = \emptyset \text{(see Figure \ref{fig:fig}(A) for an example)}.
    \end{equation*}
    These slices are \emph{somewhat disjoint} if:
    \begin{equation*}
        E[O(S) \cap O(T)] = \emptyset \text{ (see Figure \ref{fig:fig}(B) for an example) }.
    \end{equation*}
    Otherwise, $S$ and $T$ \emph{interlock} (see Figure \ref{fig:fig}(D) for an example).
\end{definition}

\begin{figure}
\begin{subfigure}{.45\textwidth}
  \centering
  \begin{tikzpicture}[scale=0.8,every node/.style={scale=0.8}]
    \graph  [empty nodes, nodes={circle,fill=black}, edges={black, thick}, clockwise, radius=8em,
    n=9, p=0.3] 
        { subgraph C_n [n=8,m=2,clockwise,radius=2cm,name=A]-- mid};
        \node at  ($(0,3.1)$) (C){};
        \foreach \i [count=\xi from 2]  in {1,...,8}{\node at (A \i){ };}
        \draw[ultra thick, red]  (A 4) -- (A 5) -- (C) -- (A 4) -- (A 3) -- (C) -- (A 2) -- (A 3);
        \draw[ultra thick, blue] (C) -- (A 8) -- (A 7) -- (A 6) -- (C) -- (A 7);
        \node[circle,draw=black,fill=black!100] () at (0,3.1) {};
\end{tikzpicture}
  \caption{Two almost disjoint slices of $W_9$: an even slice (blue) and an odd slice (red).}
  \label{fig:sfig1}
\end{subfigure}
\hfill
\begin{subfigure}{.45\textwidth}
  \centering
 \begin{tikzpicture}[scale=0.8,every node/.style={scale=0.8}]
    \graph  [empty nodes, nodes={circle,fill=black}, edges={black, thick}, clockwise, radius=8em,
    n=5, p=0.3] 
        { subgraph C_n [n=4,m=2,clockwise,radius=2cm,name=A]-- mid};
        \node at  ($(0,3.1)$) (C){};
        \foreach \i [count=\xi from 2]  in {1,...,4}{\node at (A \i){ };}
        \draw[ultra thick, red]  (A 2) -- (A 3) -- (C) -- (A 2);
        \node at  ($(0,3.15)$) (C'){};
        \node at  ($(2,3.15)$) (A 2'){};

        \draw[ultra thick, blue] (C') -- (A 2');
        \draw[ultra thick, blue] (C') -- (A 1) -- (A 2);

\node[circle,draw=black,fill=black!100] () at (2,3.1) {};

\node[circle,draw=black,fill=black!100] () at (0,3.1) {};
\end{tikzpicture}
  \caption{Two somewhat (but not almost) disjoint slices of $W_5$.}
  \label{fig:sfig2}
\end{subfigure}
\begin{subfigure}{.45\textwidth}
  \centering
 \begin{tikzpicture}[scale=0.8,every node/.style={scale=0.8}]
    \graph  [empty nodes, nodes={circle,fill=black}, edges={black, thick}, clockwise, radius=8em,
    n=7, p=0.3] 
        { subgraph C_n [n=6,m=2,clockwise,radius=2cm,name=A]-- mid};
        \node at  ($(0,3.1)$) (C){};
        \foreach \i [count=\xi from 1]  in {1,...,6}{\node at (A \i){ };}
        \draw[ultra thick, red]  (A 1) -- (A 2) -- (C) -- (A 4) -- (A 3) -- (C) -- (A 5) -- (A 6) -- (C) -- (A 1);

\node[circle,draw=black,fill=black!100] () at (0,3.1) {};
\end{tikzpicture}
  \caption{An odd bundle of slices of $W_7$.}
  \label{fig:sfig3}
\end{subfigure}
\hfill
\begin{subfigure}{.45\textwidth}
  \centering
 \begin{tikzpicture}[scale=0.8,every node/.style={scale=0.8}]
    \graph  [empty nodes, nodes={circle,fill=black}, edges={black, thick}, clockwise, radius=8em,
    n=7, p=0.3] 
        { subgraph C_n [n=6,m=2,clockwise,radius=2cm,name=A]-- mid};
        \node at  ($(0,3.1)$) (C){};
        \foreach \i [count=\xi from 1]  in {1,...,6}{\node at (A \i){ };}
         \draw[ultra thick, blue]  (A 1) -- (A 2) -- (A 3) -- (A 4) -- (A 1);
        \draw[ultra thick, red]  (A 3) -- (A 4) -- (A 5) -- (A 6) -- (A 3);
        
  \node at  ($(0,1.15)$) (A 4'){};
        \node at  ($(1.69,2.15)$) (A 3'){};
        
\draw[ultra thick, blue]  (A 3') -- (A 4');

\node[circle,draw=black,fill=black!100] () at (0,3.1) {};

\end{tikzpicture}
  \caption{Two interlocking slices. }
\end{subfigure}
\caption{Figures depicting: almost disjoint slices (A), somewhat disjoint slices (B), an odd bundle of slices (C), and interlocking slices (D).}
\label{fig:fig}
\end{figure}
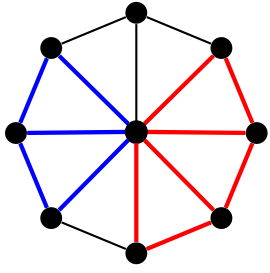
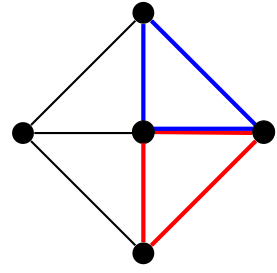
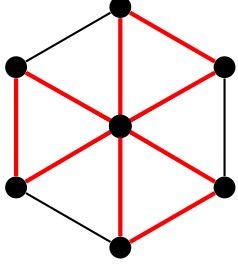
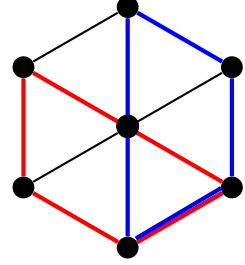

\begin{lemma}
    A subgraph $U \subseteq H \subseteq W_n$ is an odd structure of $H$ if and only if either 
    \begin{enumerate}
        \item \label{item1}  $U$ is an odd slice of $H$ or
        \item \label{item2}  $U = \Delta$ and $n$ is even.
    \end{enumerate}
\end{lemma}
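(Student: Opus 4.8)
The plan is to prove both directions by separating the cases $v\in V(U)$ and $v\notin V(U)$, where $v$ denotes the hub of $W_n$.

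\emph{Case $v\notin V(U)$.} Since $W_n\setminus v=\Delta=C_{n-1}$, such a $U$ satisfies $U\subseteq H\setminus v\subseteq C_{n-1}$. A $2$-connected subgraph of $C_{n-1}$ must be all of $C_{n-1}$, since any proper subgraph of a cycle is a linear forest and hence has a vertex of degree at most $1$. Thus $U=\Delta$, which can occur only when $\Delta\subseteq H$, and in that case $\Delta$ is automatically induced in $H$ because it is chordless in $W_n$. Finally $C_{n-1}$ is factor-critical exactly when $n-1$ is odd, i.e.\ when $n$ is even: deleting a vertex of an odd cycle leaves an even path, which has a perfect matching, whereas deleting a vertex of an even cycle leaves an odd path, which does not. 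This is precisely item~(\ref{item2}), and it accounts for every odd structure of $H$ avoiding $v$.

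\emph{Case $v\in V(U)$.} First I would fix the shape of $U$. As $U$ is $2$-connected, $v$ is not a cut vertex, so $U\setminus v$ is connected; since every edge of $U$ not incident to $v$ is a rim edge, $U\setminus v$ is a connected subgraph of $C_{n-1}$ made of rim edges, hence an arc (a path) of $\Delta$ or all of $\Delta$. Thus $U$ is a slice $S$ with $O(S)=U\setminus v$, and the lemma reduces in this case to the assertion that \emph{$U$ is factor-critical if and only if $|U|$ is odd}, which then matches the odd structures through $v$ with the odd slices, item~(\ref{item1}). One implication is immediate, since factor-critical graphs have odd order. For the converse, suppose $|U|$ is odd. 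If $O(U)$ is a proper arc $u_1u_2\cdots u_k$, then $2$-connectedness forces the spokes $\{v,u_1\}$ and $\{v,u_k\}$ into $U$ (otherwise $u_1$ or $u_k$ would have degree $1$), so $U$ contains the spanning cycle $v\,u_1u_2\cdots u_k\,v$, which has odd length $|U|$; deleting any vertex $w$ from this odd cycle leaves an even path, whose perfect matching is a perfect matching of $U\setminus w$. If instead $O(U)=\Delta$, then $V(U)=V(W_n)$, so (being induced) $U=W_n$ with $n$ odd; then $W_n\setminus v=C_{n-1}$ is an even cycle, and $W_n\setminus u_i$ is the hub joined to a path on $n-2$ vertices, in which matching $v$ to an endpoint of that path leaves an even path to be matched internally. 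In every case $U\setminus w$ has a perfect matching, so $U$ is factor-critical.

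The crux is the spanning-odd-cycle observation in the proper-arc subcase: it trivializes the factor-criticality check by reducing it to the fact that even paths have perfect matchings, and it is the place where $2$-connectedness is genuinely used (to supply the two outer spokes). The remaining ingredients—identifying $U\setminus v$ as an arc or $\Delta$ via the cut-vertex argument, reducing to the parity of $|U|$, and the even/odd-cycle matching facts used in the first case—are short and structural; one could alternatively read off the shape of $U$ from an open ear decomposition (Theorem~\ref{thm_factor}), but the direct argument suffices.
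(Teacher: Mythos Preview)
Your proof is correct and follows essentially the same approach as the paper's: split on whether $v\in U$, identify $U$ with $\Delta$ or a slice accordingly, and reduce to the parity criterion for factor-criticality. The paper's version is much terser—asserting without justification that a slice is factor-critical iff it is odd and that $U=\Delta$ when $v\notin U$—whereas you supply these details (the spanning-odd-cycle argument and the $2$-connected-subgraph-of-a-cycle observation), so your write-up is a faithful expansion of the paper's sketch.
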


\begin{proof}
    Suppose $U \subseteq H$ is an odd structure in $H$.
    Then $U$ is induced and $2$-connected.
    Thus, if $v \in U$, then $U$ is a slice of $H$; otherwise $U = \Delta$.
    A slice of $H$ is factor critical if and only if it is odd, and $\Delta$ is factor critical if and only if $n$ is even.
    Thus, either \emph{(1)} $U$ is an odd slice or \emph{(2)} $U = \Delta$ and $n$ is even, as desired.
    The reverse direction follows by similar principles.
\end{proof}

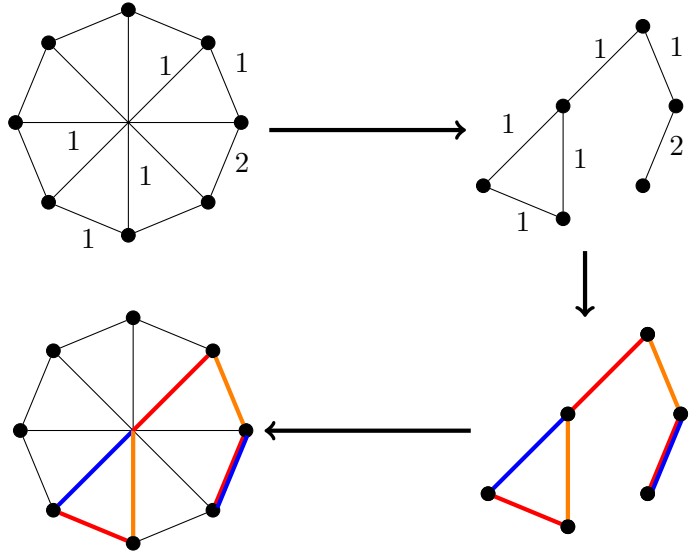
\begin{figure}
    \centering
    \begin{tikzpicture}

\node (a) at (-6, 0)
{
    \begin{tikzpicture}[scale=1]

\foreach \i in {1,...,8}
    \node[fill,circle,draw, scale=0.5] (v\i) at ({360/8 * (\i - 1)}:1.5) {};

\foreach \i in {1,...,7}
    \draw (v\i) -- (v\the\numexpr\i+1\relax);
\draw (v8) -- (v1);

\foreach \i in {1,...,8}
    \draw (v\i) -- (0,0);

\path (0,0) -- node[midway, above ] {1} (v2);
\path (v1) -- node[midway, above right] {1} (v2);

\path (0,0) -- node[midway, right ] {1} (v7);
\path (0,0) -- node[midway, above left] {1} (v6);
\path (v6) -- node[midway, below ] {1} (v7);
\path (v8) -- node[midway, right] {2} (v1);

\end{tikzpicture}
};

\node (b) at (0,0)
{
    \begin{tikzpicture}[scale=1]

\node[fill,circle,draw, scale=0.5] (v8) at ({360/8 * (8 - 1)}:1.5) {};

\node[fill,circle,draw, scale=0.5] (v1) at ({360/8 * (1 - 1)}:1.5) {};

\node[fill,circle,draw, scale=0.5] (v2) at ({360/8 * (2 - 1)}:1.5) {};

\node[fill,circle,draw, scale=0.5] (v6) at ({360/8 * (6 - 1)}:1.5) {};

\node[fill,circle,draw, scale=0.5] (v7) at ({360/8 * (7 - 1)}:1.5) {};

\node[fill,circle,draw, scale=0.5] at (0,0) {};

\draw (v8) -- (v1) -- (v2) -- (0,0) -- (v7) -- (v6) -- (0,0);

\path (0,0) -- node[midway, above ] {1} (v2);
\path (v1) -- node[midway, above right] {1} (v2);

\path (0,0) -- node[midway, right ] {1} (v7);
\path (0,0) -- node[midway, above left] {1} (v6);
\path (v6) -- node[midway, below ] {1} (v7);
\path (v8) -- node[midway, right] {2} (v1);
\end{tikzpicture}
};

\node (c) at (0,-4)
{
    \begin{tikzpicture}[scale=1]

\node[fill,circle,draw, scale=0.5] (v8) at ({360/8 * (8 - 1)}:1.5) {};

\node[fill,circle,draw, scale=0.5] (v1) at ({360/8 * (1 - 1)}:1.5) {};

\node[fill,circle,draw, scale=0.5] (v2) at ({360/8 * (2 - 1)}:1.5) {};

\node[fill,circle,draw, scale=0.5] (v6) at ({360/8 * (6 - 1)}:1.5) {};

\node[fill,circle,draw, scale=0.5] (v7) at ({360/8 * (7 - 1)}:1.5) {};

\node[fill,circle,draw, scale=0.5] at (0,0) {};

\node(v1')[scale=0.5] at ({360/8 * (1 - 1)}:1.5+0.045) {};

\node(v8')[scale=0.5] at ({360/8 * (8 - 1)}:1.5+0.045) {};

\draw (v8) -- (v1) -- (v2) -- (0,0) -- (v7) -- (v6) -- (0,0);

\draw[ultra thick, red] (v6) -- (v7);
\draw[ultra thick, blue] (v6) -- (0,0);
\draw[ultra thick, orange] (v7) -- (0,0);
\draw[ultra thick, red] (v2) -- (0,0);
\draw[ultra thick, orange] (v2) -- (v1);
\draw[ultra thick, red] (v1) -- (v8);
\draw[ultra thick, blue] (v1') -- (v8');

\path (0,0) -- node[midway, above ] {} (v2);
\path (v1) -- node[midway, above right] {} (v2);

\path (0,0) -- node[midway, right ] {} (v7);
\path (0,0) -- node[midway, above left] {} (v6);
\path (v6) -- node[midway, below ] {} (v7);
\path (v8) -- node[midway, right] {} (v1);

\node[fill,circle,draw, scale=0.5] (v8) at ({360/8 * (8 - 1)}:1.5) {};

\node[fill,circle,draw, scale=0.5] (v1) at ({360/8 * (1 - 1)}:1.5) {};

\node[fill,circle,draw, scale=0.5] (v2) at ({360/8 * (2 - 1)}:1.5) {};

\node[fill,circle,draw, scale=0.5] (v6) at ({360/8 * (6 - 1)}:1.5) {};

\node[fill,circle,draw, scale=0.5] (v7) at ({360/8 * (7 - 1)}:1.5) {};

\node[fill,circle,draw, scale=0.5] at (0,0) {};
\end{tikzpicture}
};

\node (d) at (-6,-4)
{
 \begin{tikzpicture}[scale=1]

\foreach \i in {1,...,8}
    \node[fill,circle,draw, scale=0.5] (v\i) at ({360/8 * (\i - 1)}:1.5) {};
    
\node(v1')[scale=0.5] at ({360/8 * (1 - 1)}:1.5+0.045) {};

\node(v8')[scale=0.5] at ({360/8 * (8 - 1)}:1.5+0.045) {};

\foreach \i in {1,...,7}
    \draw (v\i) -- (v\the\numexpr\i+1\relax);
\draw (v8) -- (v1);

\foreach \i in {1,...,8}
    \draw (v\i) -- (0,0);

\path (0,0) -- node[midway, above ] {} (v2);
\path (v1) -- node[midway, above right] {} (v2);

\path (0,0) -- node[midway, right ] {} (v7);
\path (0,0) -- node[midway, above left] {} (v6);
\path (v6) -- node[midway, below ] {} (v7);
\path (v8) -- node[midway, right] {} (v1);

\draw[ultra thick, red] (v6) -- (v7);
\draw[ultra thick, blue] (v6) -- (0,0);
\draw[ultra thick, orange] (v7) -- (0,0);
\draw[ultra thick, red] (v2) -- (0,0);
\draw[ultra thick, orange] (v2) -- (v1);
\draw[ultra thick, red] (v1) -- (v8);
\draw[ultra thick, blue] (v1') -- (v8');

\end{tikzpicture}
};
\draw[ultra thick] [->] (a)--(b);
\draw[ultra thick] [->] (b)--(c);
\draw[ultra thick] [->] (c)--(d);

\end{tikzpicture}

\caption{The top-left figure depicts a 3-matching on $W_9$, which becomes a similar 3-matching on a graph $G$ (top-right image), from which we can decompose into 3-matchings on $G$ (the yellow, red, and blue in the bottom-right image). And this tells us how to decompose our matching on $W_9$ (bottom-left image). }
\label{fig:zero-weight trick}
\end{figure}
Now, let $x$ be a nondegenerate $t$-matching on $H$.
We want to find a matching $m$ on $H$ such that $x - m$ is a $(t-1)$-matching on $H$.
Our matching $m$ must satisfy an inequality description:
\begin{enumerate}
    \item $m(u) = 1$ for each tight vertex $u \in H$, and
    \item $m(U) \geq I(U)$ for each tight odd structure $U \subseteq H$.
\end{enumerate}

Motivated by this inequality description, we introduce the following definitions:

\begin{definition}
    Let $x$ be a nondegenerate $t$-matching on $H$, and let $m$ be a matching on $H$.
    Then $m$ \emph{assigns sufficient weight} to a tight vertex $u \in H$ if $m$ saturates $u$, to an even slice $S \subseteq H$ if $m$ saturates every tight vertex in $S$, and to an odd structure $U \subseteq H$ if $m(U) \geq I(U)$.
\end{definition}

However, our inequality description is far from minimal. For example:

\begin{lemma}\label{lem:interlocking}
    Let $S$ and $T$ be interlocking tight odd slices.
    Let $R$ be the slice of $H$ given by:
    \begin{equation*}
    R := H[(O(S) \cap O(T)) \cup \{ v \}].
    \end{equation*}
    Notice that $O(R)$ has a two-vertex boundary $\{ u_S, u_T \}$, where $u_S$ is in the interior of $O(S)$ and $u_T$ is in the interior of $O(T)$.
    Let $S'$ and $T'$ be the slices of $H$ given by:
    \begin{align*}
        S' &:= H[(O(S) \setminus O(T)) \cup \{ v, u_S\}], \\
        T' &:= H[(O(T) \setminus O(S)) \cup \{ v, u_T \}].
    \end{align*}
    If a matching $m$ assigns sufficient weight to $R$, $S$, and $T'$, then $m$ automatically assigns sufficient weight to $S$ and $T$.
\end{lemma}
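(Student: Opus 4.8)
The plan is to translate the statement into arithmetic relating the three quantities $x(\cdot)$, $m(\cdot)$ and the index $I(\cdot)$, and then to establish the surviving inequality by a vertex–deficiency count on the even one among the pieces $S'$ and $R$. First I would record the combinatorial picture forced by the interlocking of $S$ and $T$. Writing the rim arcs so that $O(S')\cup O(R)=O(S)$ (meeting only in the single vertex $u_S$) and $O(R)\cup O(T')=O(T)$ (meeting only in $u_T$), and using that $W_n$ has no rim chords, one gets the edge–set identities
\[
E(S)=E(S')\cup E(R),\quad E(S')\cap E(R)=\{\{u_S,v\}\},\qquad E(T)=E(R)\cup E(T'),\quad E(R)\cap E(T')=\{\{u_T,v\}\},
\]
and since $u_S\neq u_T$ the two shared edges are distinct spokes at $v$, so any matching uses at most one of them. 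Because $|V(S)|=|V(S')|+|V(R)|-2$ with exactly one of $S',R$ even (as $|V(S)|$ is odd), a direct computation from the definition of the index gives, irrespective of parities,
\[
I(S)=I(S')+I(R)+(t-1)-x(\{u_S,v\}),\qquad I(T)=I(R)+I(T')+(t-1)-x(\{u_T,v\}),
\]
while the edge–set identities give $m(S)=m(S')+m(R)-m(\{u_S,v\})$ and $m(T)=m(R)+m(T')-m(\{u_T,v\})$.

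Next I would reduce "$m$ assigns sufficient weight to $S$" (which, since $S$ is an odd structure, means $m(S)\ge I(S)$) and the symmetric claim for $T$ to a single lower bound. If $R$ is odd, then "sufficient weight to $R$" is exactly $m(R)\ge I(R)$, and substituting into the identities above shows it suffices to prove $m(S')-m(\{u_S,v\})\ge I(S')+(t-1)-x(\{u_S,v\})$; if instead $R$ is even, then $S'$ is odd, "sufficient weight to $S'$" gives $m(S')\ge I(S')$ directly, and the same bookkeeping reduces the claim to the symmetric inequality $m(R)-m(\{u_S,v\})\ge I(R)+(t-1)-x(\{u_S,v\})$. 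Either way we are left with a statement of the form: the even slice $U$ among $S',R$, to which $m$ assigns sufficient weight (i.e. $m$ saturates every tight vertex of $U$), satisfies a lower bound on $m(U)$ carrying a prescribed slack over $I(U)$; the piece $T$ is handled the same way using $T'$ or $R$.

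The core step is proving that lower bound, and for this the tool is a deficiency count. For the even slice $U$, $\sum_{u\in V(U)}x(u)=2x(U)+L$, where $L$ is the total $x$–weight on edges leaving $U$; combining with $x(U)=I(U)+(t-1)|V(U)|/2$ shows $U$ has at most $|V(U)|-2I(U)-L$ loose vertices, hence at least $2I(U)+L$ tight vertices. Since $m$ saturates all of them and is a matching using at most one spoke at $v$, only a bounded number of them can be matched by $m$ along edges leaving $U$, which gives $m(U)\ge I(U)+\tfrac12(L-c)$ for an explicit small constant $c$. One then bounds $L$ from below using the degree–$\le 3$ constraints at $u_S$ and at the rim endpoint of $O(U)$ together with $x(e)\ge 1$ for every edge $e$ of $H$ (nondegeneracy), and checks this supplies the slack $(t-1)-x(\{u_S,v\})+m(\{u_S,v\})$ demanded by the reduced inequality. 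The main obstacle I expect is the subcase in which $m$ actually uses one of the shared spokes $\{u_S,v\},\{u_T,v\}$: then $m$ uses no other spoke and no rim edge at that vertex, so the amounts $m$ places on $S'$, $R$, $T'$ become coupled and the crude counting bound degrades; this subcase must be treated directly, by accounting for how $m$ distributes its rim edges along the arc $O(S)\cup O(T)$ and invoking the tightness of $S$ and $T$ to recover the missing slack.
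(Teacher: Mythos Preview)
Your skeleton matches the paper's argument: both decompose $S$ and $T$ into the pieces $S'$, $R$, $T'$, relate $I(S)$ additively to $I(S')$ and $I(R)$ via the shared spoke $e=\{u_S,v\}$, and then use a Handshake/pigeonhole count to convert ``$m$ saturates every tight vertex of the even piece'' into a numerical lower bound $m(\cdot)\ge I(\cdot)$. Your identity $I(S)=I(S')+I(R)+(t-1)-x(e)$ (and the symmetric one for $T$) is the correct computation; the paper's displayed chain drops the factor $(t-1)$ and uses $|S|=|S'|+|R|-1$ rather than $-2$, arriving at $I(S)=I(S')+I(R)+\tfrac12-x(e)\le I(S')+I(R)$ and stopping there. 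So at the level of strategy you are doing exactly what the paper does, only with the bookkeeping kept honest.

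The gap is that you do not close the argument once the honest $(t-1)$ is present. With the correct identity, the naive bound $m(U)\ge I(U)$ on the even piece $U\in\{S',R\}$ is no longer enough: you need an additional slack of $(t-1)-x(e)+m(e)$, and your plan to extract this from the boundary weight $L$ in the deficiency count is only sketched. The obstruction is concrete: the dominant contribution to $L$ is the spoke weight leaving $U$ at $v$, namely $x(v)$ minus the weight of the spokes already inside $U$, and this quantity \emph{decreases} as $x(e)$ grows, so the degree-$\le 3$ rim constraints and nondegeneracy alone do not produce a lower bound on $L$ of the required shape $2(t-1)-2x(e)+\cdots$. The subcase you single out---$m$ using one of the shared spokes---is precisely where the crude count loses a unit and one must instead invoke the tightness of $S$ and $T$ themselves, not merely the saturation hypothesis on the even piece; that argument is missing from your proposal. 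The paper sidesteps all of this by asserting the cleaner inequality $I(S)\le I(S')+I(R)$ and moving on, so while your route is the same, you have (correctly) uncovered work that the paper's printed proof leaves implicit, and you have not yet supplied it.
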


\begin{figure}
    \centering
\begin{tikzpicture}[scale=0.8,every node/.style={scale=0.8}]
    \graph  [empty nodes, nodes={circle,fill=black}, edges={black, thick}, clockwise, radius=8em,
    n=7, p=0.3] 
        { subgraph C_n [n=6,m=2,clockwise,radius=2cm,name=A]-- mid};
                \node at  ($(0,3.1)$) (C){};

          \node at  ($(0,1.15)$) (A 4'){};
        \node at  ($(1.69,2.15)$) (A 3'){};

\fill[orange!70] (A 3.center) -- (A 4.center) -- (C.center) -- cycle;

\fill[purple!40] (A 1.center) -- (A 2.center) --  (A 3.center) -- (C.center) -- cycle;

\fill[green!40] (A 4.center) -- (A 5.center) --  (A 6.center) -- (C.center) -- cycle;

        \foreach \i [count=\xi from 1]  in {1,...,6}{\node at (A \i){ };}
         \draw[ultra thick, blue]  (A 1) -- (A 2) -- (A 3) -- (A 4) -- (A 1);
        \draw[ultra thick, red]  (A 3) -- (A 4) -- (A 5) -- (A 6) -- (A 3);

\draw[ultra thick, blue]  (A 3') -- (A 4');

\draw[ black!50]  (C) -- (A 2);

\draw[ black!50]  (C) -- (A 5);

\node[circle,draw=black,fill=black!100] () at (0,3.1) {};

\node[circle,draw=black,fill=black!100] () at (A 3) {};

\node[circle,draw=black,fill=black!100] () at (A 4) {};

\node[circle,draw=black,fill=black!100] () at (A 1) {};

\node[circle,draw=black,fill=black!100] () at (A 2) {};

\node[circle,draw=black,fill=black!100] () at (A 5) {};

\node[circle,draw=black,fill=black!100] () at (A 6) {};

\end{tikzpicture}
\caption{An example of how two interlocking odd slices, $S$ (in {\color{red}{red}}) and $T$ (in {\color{blue}{blue}}), give rise to $S'$ (in {\color{green!40}{green}}), $R$ (in {\color{orange!70}{orange}}), and $T'$ (in {\color{purple!40}{pink}}). }
    \label{fig:Interlocking Slices}
\end{figure}
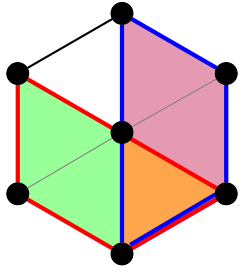

\begin{proof}
    Without loss of generality, $R$ is odd\footnote{The same techniques used to treat this case apply when $R$ is even.},
    so $S'$ and $T'$ are both even.    
    If $m$ assigns sufficient weight to $S'$, then $m(S') \geq I(S')$ by the Handshake Lemma and Pigeonhole Principle.
        
    Let $e$ denote the sole spoke in $S' \cap R$.
    This spoke exists in $H$ because it lies on the outer edge of $T$, and $T$ is $2$-connected.
    Since $x$ is nondegenerate we have:
    \begin{align*}
        I(S) &= x(S) - \frac{|S|-1}{2} \\
        &= x(S') + x(R) - x(e) - \left(\frac{|S'|}{2} + \frac{|R| - 1}{2} - \frac{1}{2}\right) \\
        &= I(S') + I(R) + \frac{1}{2} - x(e) \\
        &\leq I(S') + I(R).
    \end{align*}
    Thus, if $m$ assigns sufficient weight to $S'$ and $R$, it also assigns sufficient weight to $S$.
    
    Similarly, if $m$ assigns sufficient weight to $T'$ and $R$, it also assigns sufficient weight to $T$, as desired.
\end{proof}

Figure \ref{fig:Interlocking Slices} depicts how two interlocking odd slices give rise to $S'$, $R$, and $T'$ as instructed by Lemma \ref{lem:interlocking}.

It follows by the principles of induction that when we construct $m$ we need only consider some \emph{troublesome} collection of tight, somewhat disjoint odd slices (alongside the tight vertices).
Furthermore, it turns out that we need not consider tight vertices inside troublesome slices:

\begin{lemma}
    Let $S$ be a tight odd slice and $u$ a tight vertex in the interior of $O(S)$.
    Then $S \setminus \{ u \}$ is an almost disjoint union of two slices $R$ and $T$.
    If a matching $m$ assigns sufficient weight to $R$, $T$, and $u$, then $m$ automatically assigns sufficient weight to $S$.
\end{lemma}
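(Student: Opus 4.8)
The plan is to mirror the proof of Lemma~\ref{lem:interlocking}, with the deleted vertex $u$ playing the role there played by the shared edge of two interlocking slices. I first set up the decomposition. Since $u$ lies in the interior of $O(S)$, the boundary $O(S)$ is a path; deleting $u$ splits it into two nonempty subpaths with vertex sets $O_R$ and $O_T$, and I put $R := H[O_R \cup \{v\}]$ and $T := H[O_T \cup \{v\}]$. Then $V(R)\cap V(T) = \{v\}$, $O(R)\cap O(T) = \emptyset$ (so $R$ and $T$ are almost disjoint), and $V(R)\cup V(T)\cup\{u\} = V(S)$, whence $S\setminus\{u\} = R\cup T$. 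It remains to check that $R$ and $T$ are $2$-connected, hence genuine slices: this follows because $S$ being $2$-connected forces its spokes to cover $O(S)$ densely enough that each half inherits the spokes it needs, the extreme case $|O_R| = 1$ (or $|O_T| = 1$) giving a bare spoke, to be treated separately.

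The crux is an edge partition together with an index identity. Because $O_R$ and $O_T$ are disjoint arcs of the outer cycle there is no edge of $H$ between them, so every edge of $S$ that misses $u$ lies in $E(R)$ or $E(T)$; and since $u$ is interior to $O(S)$, every edge of $H$ at $u$ already lies in $S$. Hence $E(S) = E(R)\sqcup E(T)\sqcup \iota_S(u)$ is a disjoint partition, with $\iota_S(u)$ the edges of $S$ at $u$, and correspondingly $x(S) = x(R)+x(T)+x(u)$ and $m(S) = m(R)+m(T)+m(\iota_S(u))$. A parity count gives $|S| = |R|+|T|$, so exactly one of $R,T$ is odd; say $R$ is odd and $T$ even. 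Summing $x$ over the partition and using $\tfrac{|S|-1}{2} = \tfrac{|R|-1}{2}+\tfrac{|T|}{2}$, the bound terms cancel exactly as in Lemma~\ref{lem:interlocking}, yielding
\[
    I(S) = I(R) + I(T) + x(u).
\]

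To finish, I combine this with the hypotheses on $m$. Since $u$ is tight and $m$ assigns it sufficient weight, $m$ saturates $u$, so $m(\iota_S(u)) = 1$ (as $m$ is a matching). Since $m$ assigns sufficient weight to the odd structure $R$, one has $m(R)\ge I(R)$; and since $m$ assigns sufficient weight to the even slice $T$, the Handshake Lemma and the Pigeonhole Principle give $m(T)\ge I(T)$, just as in Lemma~\ref{lem:interlocking}. Combining these estimates with a careful accounting of how $m$ behaves at $u$ and of the $x$-weight that leaks out of $R$ and $T$ (through $u$ and through the free endpoints of the two arcs), and using nondegeneracy of $x$ to supply the slack, then gives $m(S)\ge I(S)$, i.e.\ $m$ assigns sufficient weight to $S$.

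The main obstacle is precisely this last step: unwinding the bounds, what one needs is that the slack in $m(R)\ge I(R)$ and $m(T)\ge I(T)$ together absorbs the extra term $x(u)$ appearing in the identity above, which is the analogue of the closing inequality $\tfrac12 - x(e)\le 0$ of Lemma~\ref{lem:interlocking}. Making this work requires tracking carefully how the single $m$-edge at $u$ is apportioned between the $R$-side and the $T$-side and how much $x$-weight leaks out of each half, and the degenerate configurations (a one-vertex arc, or the spoke $uv\notin E(H)$) should be dispatched by hand in the spirit of Lemma~\ref{Sublemma}.
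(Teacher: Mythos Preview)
Your strategy is exactly the paper's: partition $E(S)$ as $E(R)\sqcup E(T)\sqcup \iota_S(u)$, derive an index identity, and read off the conclusion. The paper's entire argument is three lines: from $x(S)=x(R)+x(T)+t$ and $\tfrac{|S|-1}{2}=\tfrac{|R|+|T|-1}{2}$ it asserts $I(S)=I(R)+I(T)+1$, and then $m(S)=m(R)+m(T)+m(\iota_S(u))\ge I(R)+I(T)+1=I(S)$ finishes.

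Your computation $I(S)=I(R)+I(T)+x(u)$ is the honest version of that step; since $u$ is tight this gives $I(R)+I(T)+t$, not $+1$. The ``obstacle'' you flag in your final paragraph is therefore genuine: from $m(R)\ge I(R)$, $m(T)\ge I(T)$ (via the Handshake/Pigeonhole argument you borrow from Lemma~\ref{lem:interlocking}), and $m(\iota_S(u))=1$, one obtains only $m(S)\ge I(R)+I(T)+1$, which falls short of $I(S)$ by $t-1$. Your closing appeal to ``slack,'' ``leaking $x$-weight,'' and nondegeneracy is a plan, not a proof---nothing in the stated hypotheses obviously supplies those missing $t-1$ units, and the degenerate cases you defer are not actually handled. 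So: the approach matches the paper's, your arithmetic is in fact more careful than the paper's (which writes $+1$ where $+t$ belongs and thereby sidesteps the difficulty), but your proposal is explicitly incomplete at precisely the point you identify.
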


\begin{proof}
    We have $x(S) = x(R) + x(T) + t$ and:
    \begin{equation*}
        \frac{|S|-1}{2} = \frac{|R|+|T|-1}{2}.
    \end{equation*}
    Since $R$, $T$, and the edges in $H$ incident to $u$ are all disjoint, it follows that $I(S) = I(R) + I(T) + 1$. Thus, if $m$ assigns sufficient weight to $R$, $T$, and $u$, then it assigns sufficient weight to $S$, as desired. 
\end{proof}

We are now ready to construct $m$.
To this end, let $S$ be a troublesome slice.
Note $O(S)$ contains an even number of vertices linked together by an odd number of edges.
We can choose to add at most $\frac{|S|-1}{2}$ of these edges to $m$.
If we choose this many edges, we sufficiently reduce the weight of $S$ (regardless of the index of $S$), but we also include in $m$ the edges at the outer ends of $O(S)$.
If the vertices in $H \setminus S$ near $S$ are tight, this can cause complications.

\begin{figure}
    \centering
    
\begin{tikzpicture}[scale=1.5]

\node (a) at (-3, 0)
{
    \begin{tikzpicture}[scale=1]

\foreach \i in {1,...,7}
    \node[fill,circle,draw, scale=0.5] (v\i) at ({-360/7 * (\i - 1) + 90}:1.5) {};

\foreach \i in {2,...,7}
    \draw (v1) -- (v\i);

\draw[ultra thick, red] (v2) -- (v3);
\draw (v3) -- (v4);
\draw[ultra thick, red] (v4) -- (v5);
\draw (v5) -- (v6);
\draw[ultra thick, red] (v6) -- (v7);

\end{tikzpicture}
};
\node (b) at (0,0)
{
  \begin{tikzpicture}[scale=1]

\foreach \i in {1,...,7}
     \node[fill,circle,draw, scale=0.5] (v\i) at ({-360/7 * (\i - 1) + 90}:1.5) {};

\foreach \i in {2,...,7}
    \draw (v1) -- (v\i);

\draw (v2) -- (v3);
\draw[ultra thick, blue] (v3) -- (v4);
\draw (v4) -- (v5);
\draw[ultra thick, blue] (v5) -- (v6);
\draw (v6) -- (v7);

\end{tikzpicture}
};
\node (c) at (3,0)
{
  \begin{tikzpicture}[scale=1]

\foreach \i in {1,...,7}
    \node[fill,circle,draw, scale=0.5] (v\i) at ({-360/7 * (\i - 1) + 90}:1.5) {};

\foreach \i in {2,...,7}
    \draw (v1) -- (v\i);

\draw[ultra thick, orange] (v2) -- (v3);
\draw (v3) -- (v4);
\draw[ultra thick, orange] (v4) -- (v5);
\draw (v5) -- (v6);
\draw (v6) -- (v7);

\end{tikzpicture}
};
\end{tikzpicture}
\caption{Left: Choosing $\frac{|S|-1}{2}$ edges of a troublesome slice $S$.\\
Center: Choosing $\frac{|S|-1}{2}-1$ edges so as to miss both the outer edges of $S$.\\
Right: Choosing $\frac{|S|-1}{2}-1$ edges so as to include one of the outer edges and miss another.}
    \label{fig:The Algorithm on W}
\end{figure}
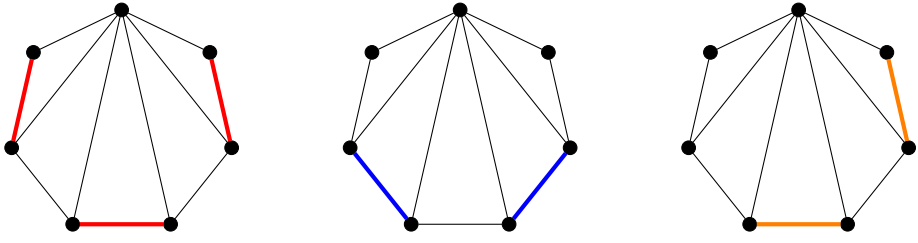

\begin{definition}
    Let $\Omega \subseteq \Delta \cap H$ be the interior of a path connecting two troublesome slices. 
    If $|\Omega|$ is odd and every vertex in $\Omega$ is tight, then $\Omega$ is a \emph{conductor}.
    Otherwise, $\Omega$ is an \emph{insulator}.
    
    We say two troublesome slices $S$ and $T$ are linked by a \emph{pseudoconductor} $\Omega$ if they are somewhat, but not almost disjoint.
    In this case, let $e$ be the edge shared by $S$ and $T$.
    We adopt the convention $e \in \Omega$ and $|\Omega| = -1$.
\end{definition}

If $S$ is not full index, (pseudo)conductors from $S$ to other troublesome slices do not pose any problems because we may choose $\frac{|S|-1}{2}-1$ alternating edges of $O(S)$ so as to either include exactly one, or none, of the outermost edges of $O(S)$. 
Furthermore, it turns out that if two full-index troublesome slices $S$ and $T$ are linked by a (pseudo)conductor $\Omega$, the troublesome cycles of $H$ must be so uncomplicated that the challenges posed by $\Omega$ are easily resolved\footnote{There is a quirk to all this: experimentation suggests that this scenario can never happen. 
However, it is easier to prove that this problem can be solved than it is to prove that it never appears.}.

\begin{lemma} \label{lem: two full index and a conductor}
    Suppose $H$ contains two full index troublesome slices $S$ and $T$ linked by a (pseudo)conductor $\Omega$.
    Then either 
    \begin{enumerate}
        \item $H$ contains no other troublesome slices or 
        \item $H$ contains exactly one other troublesome slice $W$. 
    \end{enumerate} 
    In case \emph{(1)}, $S$ and $T$ do not share a spoke outside of $\Omega$.
    In case \emph{(2)}, $W$ is somewhat but not almost disjoint from $S$ and $T$, and $W$ is only of partial index.
\end{lemma}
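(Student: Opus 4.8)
The plan is to run a weight count on the outer cycle $\Delta$ and on the spokes meeting the hub $v$, driven by three levers: nondegeneracy, i.e.\ $x(e)\ge 1$ for every $e\in E(H)$; the hub bound $x(v)\le t$; and the full-index hypotheses, which say that the odd-structure inequalities $x(S)\le t\,\tfrac{|S|-1}{2}$ and $x(T)\le t\,\tfrac{|T|-1}{2}$ hold with \emph{equality}. By the reductions preceding this lemma I may also assume the troublesome slices are pairwise somewhat disjoint and carry no tight vertices in the interiors of their boundaries. The first move is to fix coordinates: since $S$ and $T$ are somewhat disjoint odd slices linked by $\Omega$, the boundaries $O(S)$, the path underlying $\Omega$, and $O(T)$ occupy three consecutive arcs of $\Delta$; I write $\Gamma$ for the complementary arc of $\Delta\cap H$, label boundary vertices along these arcs, and record that $|O(S)|$ and $|O(T)|$ are even while $|\Omega|$ is odd (the pseudoconductor case $|\Omega|=-1$ being a degenerate sub-case to be carried along throughout).

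The core of the argument is an extremality estimate. Summing $x(u)$ over $u\in V(S)$ and rewriting the total via the Handshake Lemma — exactly as in the proof of Lemma~\ref{lem:interlocking} — the equality $x(S)=t\,\tfrac{|S|-1}{2}$ together with $x(u)\le t$ collapses to an inequality of the shape $x(v)+\bigl(\text{spoke weight over }\Omega,\,O(T),\,\Gamma\bigr)\le t+O(1)$, so the spokes of $S$ already nearly exhaust the hub budget $x(v)\le t$; running the same count from $T$ and feeding in that every vertex of $\Omega$ is tight, I expect a first reduction to be that $|\Omega|\le 1$ (or $\Omega$ is a pseudoconductor), since a conductor with two interior vertices would, via the odd-structure inequality for $S$ enlarged by those two vertices, force a spoke of weight $\le 0$. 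The same bookkeeping then pins the spoke weights over $\Gamma$ down to the minimum permitted by nondegeneracy, forces all but boundedly many vertices of $\Gamma$ to be loose, and — bringing in the odd structure $\Delta$ itself when $n$ is even — bounds the total weight of outer edges over $\Gamma$. In short, $\Gamma$ is ``frozen.''

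Next I locate any further troublesome slice $W\neq S,T$. Its boundary $O(W)$ is an arc of $\Delta$ sharing no edge with $O(S)$ or $O(T)$, so $O(W)$ either sits inside the frozen region $\Gamma$ or stretches across $\Gamma$, meeting $O(S)$ and $O(T)$ at their far endpoints. The first option is killed by the freezing estimate: a tight odd slice confined to $\Gamma$ would need a tight vertex or a tight odd sub-slice there, which the weight bounds forbid (using that $W$ has no tight interior boundary vertices). Hence $O(W)$ spans all of $\Gamma$ and $W$ is somewhat but not almost disjoint from both $S$ and $T$; the parity of $|\Gamma|$ (even, since $|O(S)|,|O(T)|$ are even and $|\Omega|$ is odd) makes $W$ odd, and the leftover-weight estimate over $\Gamma$ forces $x(W)<t\,\tfrac{|W|-1}{2}$, so $W$ is of partial index only. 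A second such slice would demand its own positive spoke weight over $\Gamma$ from a disjoint set of spokes while the budget leaves room for at most one, a contradiction; this yields the dichotomy (1)/(2). Finally, in case (1) a spoke common to $S$ and $T$ would come from a vertex of $O(S)\cap O(T)$, and for somewhat disjoint arcs that intersection is at most a single vertex — namely the one whose spoke is, by convention, absorbed into $\Omega$ — which is the stated refinement.

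The main obstacle is the bookkeeping hidden in the core estimate: the auxiliary slices one wants to test against the odd-structure inequality — $S$ or $T$ enlarged by one or two boundary vertices toward $\Omega$, or candidate sub-slices of $W$ over $\Gamma$ — are genuine odd structures only when they are $2$-connected and factor-critical, and whether this holds is sensitive to exactly which spokes lie in $E(H)$. I would dispatch this by casing on membership of the relevant spokes in $E(H)$: when a needed spoke is present, the odd-structure inequality together with the tightness of the conductor vertices gives the required equality or contradiction directly; when it is absent, $H$ is locally a bare path at that spot (a boundary vertex of degree $2$ off the hub), so no arc through it can host a tight odd slice, and the same conclusions follow with room to spare.
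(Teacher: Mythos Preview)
Your plan has the right flavor---weight counting against the hub bound and the full-index equalities---but it misses the clean global step that makes the paper's proof work, and your case~(1) argument is wrong.

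The paper does not try to bound $|\Omega|$ or freeze $\Gamma$ vertex by vertex. Instead it forms the even slice $R:=S\cup\Omega\cup T$ and covers $E(R)$ by the set $D$ consisting of $v$ together with alternate vertices of $O(R)$, obtaining $x(R)\le t\,|D|=t\cdot\tfrac{|S|+|\Omega|+|T|-1}{2}$. The full-index equalities for $S,T$ and tightness along $\Omega$ force this to be an equality, and equality in the cover bound forces $x|_R(v)=t$. Since $x$ is nondegenerate, \emph{every spoke of $H$ already lies in $R$}. Because any odd slice is $2$-connected and hence contains at least two spokes, at most one further troublesome slice $W$ can exist, and if it does it must borrow one spoke from each of $S$ and $T$---i.e.\ be somewhat but not almost disjoint from both. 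That single observation replaces your entire ``freezing $\Gamma$'' programme and the speculative reduction $|\Omega|\le1$, which the paper neither proves nor needs.

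Your treatment of case~(1) does not work. You claim that a spoke common to $S$ and $T$ must come from the single shared boundary vertex ``absorbed into $\Omega$.'' But somewhat-disjoint arcs on a cycle can meet in up to \emph{two} vertices, one on each side; in the conductor case they share no vertex on the $\Omega$ side at all, yet may still share one on the $\Gamma$ side. So somewhat-disjointness alone does not rule out a shared spoke outside $\Omega$. The paper instead argues by contradiction: if such a spoke existed, $H$ itself would be an odd slice and the inequality $x(H)\le t\tfrac{|H|-1}{2}$ would clash with the lower bound coming from $x(S)+x(T)$ and the conductor. The partial-index claim for $W$ in case~(2) is handled the same way, by testing the odd slice $Q:=S\cup W\cup T$. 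Your sketch gestures at ``leftover-weight estimates over $\Gamma$'' but never produces the contradiction; the paper's trick of passing to a single enclosing odd slice is what actually closes these cases.
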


\begin{proof}
    Suppose $H$ contains two full index troublesome slices $S$ and $T$ linked by a conductor $\Omega$.
    Then $O(S)$, $\Omega$, and $O(T)$ are pairwise disjoint, and $|S| + |\Omega| + |T|$ is odd.
    Thus the slice:
    \begin{equation*}
        R := S \cup \Omega \cup T
    \end{equation*}
    is even.
    It follows that $O(R)$ contains an odd number of vertices.
    Consider the alternating pattern of vertices along $O(R)$ that does not contain either endpoint of $O(R)$.
    Let $D \subseteq R$ consist of $v$ plus every vertex in this alternating pattern.
    Every edge in $R$ is adjacent to at least one vertex in $D$, and at each vertex $u \in D$ we have $x(u) \leq t$, so:
    \begin{equation} \label{eq: SCT along O(R)}
        x(R) \leq t \; |D| = t \left(\frac{|S|+|\Omega|+|T|-1}{2}\right).
    \end{equation}

    At the same time, since $S$ and $T$ are of full index and $\Omega$ is a conductor, we must have:
    \begin{align*}
        x(R) &= t \left( \frac{|S|-1}{2} + \frac{|T|-1}{2} + \frac{|\Omega|+1}{2} \right) \\
        &= t \left( \frac{|S| + |\Omega| + |T| - 1}{2}\right).
    \end{align*}

    Thus, equality holds in Inequality (\ref{eq: SCT along O(R)}), so every vertex in $D$ is tight with respect to $x|_R$.
    In particular:
    \begin{equation*}
        t = x|_R (v) \leq x(v) \leq t,
    \end{equation*}
    so the only spokes in $H$ are those in $R$ (since $x$ is nondegenerate).
    Furthermore, since odd slices are $2$-connected, they always contain at least two spokes.
    Thus, $H$ cannot have more than three troublesome slices: $S$, $T$, and possibly $W$.
    If $W$ exists, it must share one spoke with $S$ and another with $T$.

    Similarly, suppose $H$ contains two full-index troublesome slices $S$ and $T$ that are linked by a pseudoconductor $\Omega$ containing the edge $e$.
    Let $R$ be the even slice given by:
    \begin{equation*}
        R := S \cup T.
    \end{equation*}
    As before, look at every other vertex along $O(R)$ and at the center of $W_n$, but this time notice that $e$ links one of the vertices considered along $O(R)$ to the center of $W_n$, so that:
    \begin{equation} \label{eq: back to back}
        x(R) \leq t \left(\frac{|R|}{2}\right) - x(e).
    \end{equation}
    At the same time, since $S$ and $T$ are full index, we must have:
    \begin{align*}
        x(R) &\geq t \left( \frac{|S|-1}{2} + \frac{|T|-1}{2} \right) - x(e) \\
        &= t \left( \frac{|R|}{2} - \right) - x(e).
    \end{align*}
    Thus, equality holds in Inequality (\ref{eq: back to back}).
    As before, it follows that $H$ cannot have more than three troublesome slices: $S$, $T$, and possibly $W$, and that if $W$ exists, it must share one spoke with $S$ and another with $T$.

    Suppose for contradiction that $W$ does not exist, and that $S$ and $T$ share a spoke outside of $\Omega$.
    Then $H$ is itself an odd slice, so:
    \begin{equation*}
        x(H) \leq t \left( \frac{|H|-1}{2} \right).
    \end{equation*}
    On the other hand:
    \begin{align*}
        x(H) &> t \left( \frac{|S|-1}{2} + \frac{|T|-1}{2} + \frac{|\Omega|+1}{2} - 1 \right) \\
        &= t \left( \frac{|H|-1}{2} \right).
    \end{align*}
    By the principles of contradiction, it follows that in case (1), $S$ and $T$ cannot share a spoke outside of $\Omega$.

    Similarly, suppose for contradiction that $W$ exists and is of full index.
    Then 
    \begin{equation*}
        Q := S \cup W \cup T
    \end{equation*}
    is an odd slice of $H$, so:
    \begin{equation*}
        x(Q) \leq t \left( \frac{|Q|-1}{2} \right).
    \end{equation*}
    On the other hand:
    \begin{align*}
        x(Q) &> x(S) + x(T) + x(W) - x(v) \\
        &\geq t \left( \frac{|S| + |T| + |W| - 3}{2} - 1 \right) = t \left( \frac{|Q| - 1}{2}\right).
    \end{align*}
    By the principles of contradiction, in case (2), $W$ is of partial index.
\end{proof}

\begin{figure}
    \centering
\begin{tikzpicture}[scale=1]

\node[fill,circle,draw, scale=0.5] (v1) at (2,2) {};
\node[fill,circle,draw, scale=0.5] (v2) at (2,-2) {};
\node[fill,circle,draw, scale=0.5] (v3) at (-2,-2) {};
\node[fill,circle,draw, scale=0.5] (v4) at (-2,2) {};
\node[fill,circle,draw, scale=0.5] (C) at (0,0) {};

\draw (v1) -- (v2) -- (v3) -- (v4) -- (v1);

\draw (v3) -- (C);

\draw[blue, ultra thick] (v1.237) -- (C.27);
\draw[red,  ultra thick] (v1.205) -- (C.65); 

\draw[blue, ultra thick] (v1) -- (v2) -- (C);
\draw[red, ultra thick] (v1) -- (v4) -- (C);

\path (C) -- node[midway, below right ] {2} (v1);
\path (C) -- node[midway, below left ] {1} (v4);
\path (v1) -- node[midway, above left ] {1} (v4);
\path (C) -- node[midway, below left ] {1} (v2);
\path (v1) -- node[midway, right ] {1} (v2);

\end{tikzpicture}
    \caption{Two adjacent, troublesome slices: A 4-matching on $W_5$ containing two full-index troublesome slices, $S$ (in {\color{red}{red}}) and $T$ (in {\color{blue}{blue}}), that are somewhat, but not almost disjoint.
    Notice that our algorithm can choose either the outer edge of $S$ or the outer edge of $T$, but not both.}
    \label{fig:two adjacent-troublesome slices}
\end{figure}
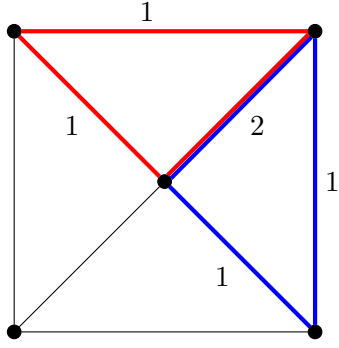

\begin{corollary} \label{cor: Full Index and a Conductor}
    Suppose $H$ contains two full index troublesome slices $S$ and $T$ linked by a (pseudo)conductor $\Omega$.
    Then $x$ splits.
\end{corollary}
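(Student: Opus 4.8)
The plan is to split along the two cases furnished by Lemma~\ref{lem: two full index and a conductor} and, in each, to build by hand a matching $m$ on $H$ for which $x-m$ is a $(t-1)$-matching. By the inequality description for $m$ preceding Lemma~\ref{lem:interlocking}, together with that lemma and the interior-tight-vertex reduction following it, it suffices to produce an $m$ assigning sufficient weight to every tight vertex of $H$ and to each tight odd structure among $S$, $T$, the outer cycle $\Delta$ (when $n$ is even), and, in Case~(2), the single extra troublesome slice $W$. The construction will ride entirely on the rigidity established inside the proof of Lemma~\ref{lem: two full index and a conductor}: writing $R:=S\cup\Omega\cup T$ in the conductor case and $R:=S\cup T$ in the pseudoconductor case, the slice $R$ is even, $x(v)=t$, every spoke of $H$ lies in $R$, and every vertex of the alternating dominating set along $O(R)$ is tight with respect to $x|_R$.

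\emph{Case (1): $S$ and $T$ are the only troublesome slices.} Each boundary $O(S)$ is a path on $|S|-1$ vertices, an even number, so it carries a unique perfect matching consisting of $\tfrac{|S|-1}{2}$ outer edges; since $S$ is full index, putting these edges into $m$ gives $m(S)=\tfrac{|S|-1}{2}=I(S)$, and likewise for $T$. All remaining tight vertices of $H$ lie along $\Delta$ outside $O(R)$ — every spoke of $H$ being confined to $R$ — so they occupy pairwise disjoint sub-paths of $\Delta$ and are saturated greedily by outer edges, which simultaneously handles $\Delta$ when $n$ is even. The one genuine friction is the conductor: $\Omega$ has odd order with all vertices tight, while the two $\Delta$-neighbours of $\Omega$ are already saturated by the perfect matchings of $O(S)$ and $O(T)$. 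I would resolve this by trading, on one side (say $S$), the forced perfect matching of $O(S)$ for the spoke at the outermost (non-$\Omega$) endpoint of $O(S)$ — which exists because $S$ is $2$-connected — together with the perfect matching of the remaining even sub-path; this still yields $m(S)=I(S)$, frees the $\Omega$-side endpoint of $O(S)$, and lets $m$ cover $\Omega$ via its end edge on that side followed by alternating outer edges. The spoke so used is the unique one $v$ may sit in, which is exactly what $x(v)=t$ demands, and the hypothesis that $S$ and $T$ share no spoke outside $\Omega$ guarantees it is not double-counted on the $T$ side. The pseudoconductor subcase is the same picture with $\Omega=\{e\}$ a shared spoke: the perfect matchings of $O(S)$ and $O(T)$ collide at the common boundary vertex $u$, so on one side one trims to $\tfrac{|S|-1}{2}-1$ outer edges avoiding $u$ and adds $e$ to $m$, restoring sufficient weight for that slice.

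\emph{Case (2): there is a third slice $W$.} Handle $S$, $T$ and $\Omega$ exactly as in Case~(1). For $W$, use that it is only of \emph{partial} index, so $I(W)\le\tfrac{|W|-1}{2}-1$: this is precisely the slack needed to pick $\tfrac{|W|-1}{2}-1$ alternating outer edges of $O(W)$ avoiding whichever outermost edge of $O(W)$ would otherwise clash with the edges already committed on $O(S)$ and $O(T)$, while still achieving $m(W)\ge I(W)$. Since $W$ is somewhat but not almost disjoint from each of $S$ and $T$, it meets them only in single spokes, and because the edges put into $m$ for $W$ are outer edges, no spoke of $H$ is used twice; hence $m$ is a genuine matching and $x-m$ a $(t-1)$-matching, so $x$ splits.

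\emph{Main obstacle.} No individual inequality here is hard; the difficulty is the simultaneous bookkeeping — verifying that the forced perfect matchings on $O(S)$ and $O(T)$, the covering of $\Omega$, the greedy covering of the remaining tight vertices of $\Delta$, and (in Case~(2)) the edges chosen for $W$ assemble into one honest matching on $H$ with no vertex covered twice at a junction. It is the rigidity of Lemma~\ref{lem: two full index and a conductor} — spokes trapped inside $R$, $x(v)=t$, and forced tightness along $O(R)$ — that collapses the space of admissible choices to essentially a single configuration, so the construction is forced and only the handful of weight inequalities above must be checked.
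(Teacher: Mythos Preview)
Your approach is essentially the paper's: both case-split via Lemma~\ref{lem: two full index and a conductor} and build $m$ by alternating along $O(S)\cup\Omega\cup O(T)$ anchored at a spoke on the non-$\Omega$ end of $S$; in Case~(2) both cover $W$ with the interior alternating edges of $O(W)$, using only that $W$ is partial index. You describe the construction as ``start with the perfect matchings of $O(S)$, $O(T)$ and repair at $\Omega$,'' while the paper just names the spoke and says ``alternate outward,'' but the resulting $m$ is identical.

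One concrete slip in your pseudoconductor subcase: you write ``on one side one trims to $\tfrac{|S|-1}{2}-1$ outer edges avoiding $u$ and adds $e$ to $m$,'' leaving the full perfect matching on $O(T)$. But that perfect matching already contains the outer edge of $O(T)$ at $u$, so together with $e=\{v,u\}$ the vertex $u$ is covered twice and $m$ is not a matching. The repair is either to trim \emph{both} sides at $u$ (so $e$ contributes to $m(S)$ and $m(T)$ simultaneously), or---cleaner, and what the paper effectively does---to drop $e$ entirely and let the alternating pattern pass straight through $u$ from $O(S)$ into $O(T)$; then $m(S)=1+\tfrac{|S|-3}{2}$ and $m(T)=\tfrac{|T|-1}{2}$ with no collision. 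Your remark that greedy coverage of the leftover tight vertices on $\Delta$ ``simultaneously handles $\Delta$ when $n$ is even'' is asserted rather than argued, but the paper likewise defers that point to its later Subsection on the outer cycle, so you are not being held to a higher standard there.
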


\begin{proof}
    By Lemma \ref{lem: two full index and a conductor}, either \emph{(1)} $H$ contains no troublesome slices other than $S$ and $T$ or \emph{(2)} $H$ contains exactly one other troublesome slice $W$. 
    In case (1), $S$ and $T$ do not share a spoke outside of $\Omega$.
    In case (2), $W$ is somewhat but not almost disjoint from $S$ and $T$, and $W$ is only of partial index.

    Suppose $H$ contains no troublesome slices other than $S$ and $T$.
    Then $S$ and $T$ do not share a spoke outside of $\Omega$.
    Select the spoke on the side of $S$ not adjacent to $\Omega$, then select every other edge (outwards from this spoke) along $O(S)$, $\Omega$, and $O(T)$;
    add all of these spokes to $m$.
    Then $m$ assigns sufficient weight to $S$, $T$, and every tight vertex in $\Omega$.
    Some tight vertices may live outside $S$, $T$, and $\Omega$, but since these vertices necessarily live within an insulator, it is easy to find edges to add to $m$ that saturate these vertices (see Subsection \ref{subsec: Insulators} for more details.)

    Now suppose $H$ instead contains $W$.
    Then $W$ is not of full index. 
    We may therefore:
    \begin{enumerate}[(i)]
        \item Treat $S$, $\Omega$, and $T$ as in the previous paragraph,
        \item Find the collection of alternating edges along $O(W)$ that does not saturate any vertex in $S$ or $T$, then
        \item Add this collection to $m$.
    \end{enumerate}
    This process ensures $m$ assigns sufficient weight to $W$. Thus, in every case, we can find a matching $m$ such that $x - m$ is a $(t-1)$-matching, so $x$ splits, as desired.
\end{proof}

Our construction of $m$ is nearly complete.
However, we still need to decide how to choose edges along insulators, and we need to make sure we assign sufficient weight to $v$ and $\Delta$ when they are tight.
We treat this remaining problems in the following subsections:

\subsection{Insulators} \label{subsec: Insulators}
In general, after constructing $m$ along conductors and troublesome slices, we can complete the matching by choosing alternating edges along insulators.
If an insulator is of odd length, this alternating pattern extends without disruption onto the slices to the left and right of the insulator.
Otherwise, there exists some vertex in the insulator that is not tight.
In this case, we can skip the two edges incident to that vertex, so that the alternating pattern to the left of the vertex need not match the alternating pattern to the right, each of which can therefore extend without disruption onto the odd cycles to which they are adjacent.

\begin{figure}
    \centering
    
\begin{tikzpicture}[scale=1.5]

\node (a) at (-3, 0)
{
    \begin{tikzpicture}[scale=1.5]

\foreach \x/\y/\name in {1/2/v1, 2/3/v2, 3/4/v3, 4/5/v4, 5/6/v5, 6/1/v6}
    \node[fill,circle,draw, scale=0.2] (\name) at ({\x*60}:1.5) {$\name$};

\node[fill,circle,draw, scale=0.2] (C) at (0,0) {$C$};

\foreach \a/\b in {v1/v2, v2/v3, v3/v4, v4/v5, v5/v6, v6/v1}
    \draw (\a) -- (\b);

\draw[ultra thick, black] (v3) -- (v4);
\draw[ultra thick, orange] (v3) -- (v2);
\draw[ultra thick, black] (v2) -- (v1);
\draw[ultra thick, orange] (v1) -- (v6);
\draw[ultra thick, black] (v6) -- (v5);
\draw[ultra thick, red] (v6) -- (C) --(v5);
\draw[ultra thick, blue] (v4) -- (C) --(v3);

\path (C) -- node[midway, above ] {1} (v3);
\path (v2) -- node[midway, above left] {1} (v3);
\path (v2) -- node[midway, above] {3} (v1);
\path (v6) -- node[midway, above right] {1} (v1);
\path (v6) -- node[midway,  right] {2} (v5);
\path (v6) -- node[midway,  above] {1} (C);
\path (v5) -- node[midway,  below left] {1} (C);
\path (v4) -- node[midway,  left] {1} (C);
\path (v4) -- node[midway,  left] {2} (v3);

\foreach \vertex in {v1, v2, v3, v4, v5, v6}
    \draw (C) -- (\vertex);

\end{tikzpicture}
};
\node (b) at (2,0)
{
      \begin{tikzpicture}[scale=1.5]

\foreach \x/\y/\name in {1/2/v1, 2/3/v2, 3/4/v3, 4/5/v4, 5/6/v5, 6/1/v6, 3.5/4.5/v7}
    \node[fill,circle,draw, scale=0.2]  (\name) at ({\x*60}:1.5) {$\name$};

\node[fill,circle,draw, scale=0.2]  (C) at (0,0) {$C$};

\node[circle,draw, scale=0.8]  (star) at ({3*60}:1.5) {$\star$};

\foreach \a/\b in {v1/v2, v2/v3, v3/v7, v4/v5, v5/v6, v6/v1, v4/v7}
    \draw (\a) -- (\b);

\draw[ultra thick, orange] (v3) -- (v2);
\draw[ultra thick, black] (v2) -- (v1);
\draw[ultra thick, orange] (v1) -- (v6);
\draw[ultra thick, black] (v6) -- (v5);
\draw[ultra thick, red] (v6) -- (C) --(v5);
\draw[ultra thick, blue] (v4) -- (C) --(v7);
\draw[ultra thick, black] (v4) -- (v7);
\draw[ultra thick, orange] (v3) -- (v7);

\path (C) -- node[midway, above ] {1} (v7);
\path (v2) -- node[midway, above left] {1} (v3);
\path (v2) -- node[midway, above] {3} (v1);
\path (v6) -- node[midway, above right] {1} (v1);
\path (v6) -- node[midway,  right] {2} (v5);
\path (v6) -- node[midway,  above] {1} (C);
\path (v5) -- node[midway,  below left] {1} (C);
\path (v4) -- node[midway,  left] {1} (C);
\path (v4) -- node[midway,  below left] {2} (v7);
\path (v7) -- node[midway,   left] {1} (v3);

\foreach \vertex in {v1, v2, v3, v4, v5, v6, v7}
    \draw (C) -- (\vertex);

\end{tikzpicture}
};
\end{tikzpicture}
\caption{Left: Extending the matchings suggested by two full-index troublesome slices, $S$ (in {\color{red}{red}}) and $T$ (in {\color{blue}{blue}}) in the $4$-matching, shown across an even (vertex) length insulator.\\
Right: Extending along an odd length insulator. Notice that the circled far-left vertex is \emph{loose}. }
label{fig:propogation along insulators}
\end{figure}

There is one potential complication: we must skip two edges of $W_n$ in a row if the second is not in $H$.
This causes a problem if the vertex between these two edges is tight and not already a part of some troublesome slice. 
In this case, treat the first edge and the spoke attached to the tight vertex as two edges of a triangle.
We can treat this triangle as a tight odd slice as this does not disrupt any of the proofs given up to this point.
In this way, the complication can be addressed using tools we have already developed. 

\subsection{Choosing a Spoke}

When two full-index troublesome slices are joined by a (pseudo)conductor, $m$ already assigns sufficient weight to $v$.
However, when $v$ is tight and this structure does not appear, we may need to modify our construction.

Suppose first that $H$ contains some troublesome slice $S$. 
If $H$ contains any full-index troublesome slices, then take $S$ to be one of those slices.
We can choose $\frac{|S|-1}{2}$ edges of $S$ by choosing one of the outermost spokes of $S$ then proceeding in an alternating pattern along $O(S)$.
Both vertices on the boundary of $O(S)$ are saturated by this choice, but this cannot cause a problem because $S$ is, by assumption, not linked to a full-index troublesome slice by a conductor.

Now, suppose $H$ contains no troublesome slices, but that $v$ is tight. 
Let $e$ be a spoke of $H$, and let $u$ denote the vertex of $\Delta$ incident to $e$.
The algorithm may choose $e$, then proceed along the edges of $\Delta$ in an alternating pattern. 
This causes a problem if and only if every vertex in $\Delta \setminus \{ u \}$ is tight.
In that case, $u$ cannot be tight because:
\begin{equation*}
    x(H) \leq t \left( \frac{|H|-1}{2} \right)
\end{equation*}
ensures that at least one vertex must always be loose.
Thus, $e$ cannot have weight $t$, so there is some other spoke $f$ of $H$.
We may choose $f$, then proceed in an alternating pattern around $\Delta$ without issue.

\subsection{The Outer Cycle}

Thus far, we have ignored the complications posed by $\Delta$ when $n$ is even. 
If $H$ is missing any of the outer edges of $W_n$, then $\Delta$ is not $2$-connected in $H$, so it need not concern us. 
Suppose, therefore, that $H$ contains all the outer edges of $W_n$.
We would like to select the edges of $\Delta$ in an alternating pattern, so that $m(\Delta) = \frac{|\Delta|-1}{2}$. 
However, the alternating pattern is broken whenever insulators resolve the discrepancy between two full-index troublesome slices (as per Corollary \ref{cor: Full Index and a Conductor}). 

Fortunately, we need never deal with this pathology. 
Suppose $\Delta$ is tight, and $S$ is a troublesome slice of $H$ containing the spoke included in $m$ (chosen as in the proof of Corollary \ref{cor: Full Index and a Conductor}). 
Let:
\begin{equation*}
    \xi := \Delta \setminus O(S).
\end{equation*}
Then $|\xi|$ is odd. 
Let $Y$ denote the collective weight of all the spokes in $S$.
Then we have 
\begin{equation*}
    x(S) + x(\xi) - Y = x(\Delta)
\end{equation*}
and
\begin{equation*}
    \frac{|S|-1}{2} + \frac{|\xi|+1}{2} = \frac{|\Delta|+1}{2}.
\end{equation*}

It follows that:
\begin{align*}
    I(\Delta) &= (t-1) \left(\frac{|\Delta|-1}{2}\right) - x(\Delta) \\
    &= (t-1) \left( \frac{|S|-1}{2}+\frac{|\xi| + 1}{2} \right) - x(S) - x(T) + \left(Y - (t-1) \right).
\end{align*}

When $Y \leq t-1$, we have $I(\Delta) \leq I(S) + I(\xi)$, so we need not worry about $\Delta$: it suffices to sufficiently reduce the weight of $S$ and the weight of the tight vertices in $\xi$.
On the other hand, when $Y = t$, $S$ is the only troublesome slice of $H$, so the algorithm does in fact select the edges of $\Delta$ in an alternating pattern. It follows that the outer cycle does not disrupt our construction.

Thus, we have shown that, for every subgraph $H \subseteq W_n$ and nondegenerate $t$-matching $x$ on $H$, we can find a matching $m$ on $H$ such that $x-m$ is a $(t-1)$-matching on $H$.
In short, every nondegenerate $t$-matching on every subgraph of $W_n$ splits.
Thus, by Lemma \ref{lem: Inductive Method} we at last arrive at the following result:

\begin{theorem}
    The matching polytope $P_M(W_n)$ has the IDP.
\end{theorem}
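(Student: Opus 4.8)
The plan is to apply Lemma~\ref{lem: Inductive Method}: since a $t$-matching $x$ splits if and only if its restriction to the induced subgraph $W_n[x]$ splits, it suffices to show that for every subgraph $H \subseteq W_n$, every \emph{nondegenerate} $t$-matching $x$ on $H$ splits, i.e., there is a matching $m$ on $H$ with $x - m$ a $(t-1)$-matching on $H$. Because the odd structures of $H$ are exactly the odd slices of $H$ together with $\Delta$ (the latter only when $n$ is even), such an $m$ exists as soon as it saturates every tight vertex of $H$ and satisfies $m(U) \ge I(U)$ for every tight odd structure $U$.

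First I would reduce the list of constraints that $m$ must meet. Using Lemma~\ref{lem:interlocking}, any two interlocking tight odd slices may be replaced by smaller, pairwise somewhat disjoint pieces, and by the analogous lemma on a tight vertex in the interior of a tight odd slice, such a vertex may be peeled off; iterating these two reductions, it suffices to construct $m$ so that it assigns sufficient weight to some collection of pairwise somewhat disjoint tight odd slices (the \emph{troublesome} slices), to the remaining tight vertices, and (when $n$ is even) to $\Delta$.

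Then I would build $m$ edge by edge. Along a troublesome slice $S$ one selects an alternating family of edges of the path-or-cycle $O(S)$: all $\frac{|S|-1}{2}$ of them when $S$ is full index (which forces both outermost edges of $O(S)$ into $m$), or only $\frac{|S|-1}{2}-1$ of them when $S$ is partial index (so that one can avoid one or both outer edges). The arcs of $\Delta$ between troublesome slices are of two kinds: \emph{conductors} (odd runs of tight vertices, or the degenerate pseudoconductor consisting of a single shared edge) and \emph{insulators}, along which one extends the alternating pattern while skipping the two edges at a loose vertex, promoting, if necessary, a stray tight vertex and an incident spoke to a tight triangle as in Subsection~\ref{subsec: Insulators}. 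One must also ensure $m$ saturates $v$ when $v$ is tight; this is arranged by starting the alternating choice at a spoke of a full-index troublesome slice, or, if $H$ has no troublesome slice, at an appropriately chosen spoke of $\Delta$.

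The hard part is the collision of two \emph{full-index} troublesome slices joined by a (pseudo)conductor, where each slice demands the shared outer edge and the alternating pattern cannot honor both. This is resolved by Lemma~\ref{lem: two full index and a conductor} and Corollary~\ref{cor: Full Index and a Conductor}: the configuration is so rigid (all spokes of $H$ lie in $S \cup \Omega \cup T$, at most one further troublesome slice $W$ can occur and then only of partial index, and $S$ and $T$ share no spoke outside $\Omega$) that a single consistent alternating choice still assigns sufficient weight everywhere. Finally, for $n$ even one checks that a tight $\Delta$ adds no new obstruction: writing $\xi := \Delta \setminus O(S)$ for a troublesome slice $S$ carrying the chosen spoke, an index computation gives $I(\Delta) \le I(S) + I(\xi)$ unless $S$ is the \emph{only} troublesome slice, in which case the alternating choice around $\Delta$ already works. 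Putting these pieces together shows that every nondegenerate $t$-matching on every subgraph of $W_n$ splits, and Lemma~\ref{lem: Inductive Method} then gives that $P_M(W_n)$ has the IDP.
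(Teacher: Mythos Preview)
Your proposal is correct and follows essentially the same route as the paper: reduce via Lemma~\ref{lem: Inductive Method} to splitting nondegenerate $t$-matchings on subgraphs $H\subseteq W_n$, pare down the constraints on $m$ to a family of pairwise somewhat disjoint troublesome slices (via Lemma~\ref{lem:interlocking} and the tight-interior-vertex lemma), build $m$ by alternating along $O(S)$ and along insulators, handle the full-index/(pseudo)conductor collision via Lemma~\ref{lem: two full index and a conductor} and Corollary~\ref{cor: Full Index and a Conductor}, ensure $v$ is saturated by choosing a spoke as in the paper, and finally verify that a tight $\Delta$ imposes no extra obstruction. The only cosmetic difference is that you compress the paper's three closing subsections (Insulators, Choosing a Spoke, The Outer Cycle) into a single paragraph each, but the logical content is the same.
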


\begin{proof}
    This is a consequence of the work above.
\end{proof}


\section{Conclusion \& Future Work}

As mentioned in the introduction, some of the motivation for our work comes from Ehrhart theory.
Recall that for a lattice polytope $P$, the \emph{Ehrhart series} of $P$ is the generating function:
\[
  \operatorname{Ehr}(P;z):=\sum_{t\in  {\Z_>0}}|tP \cap \Z^n| \, z^t=\frac{h^*(P;z)}{(1-z)^{\operatorname{dim}(P)+1}} \, .
\]
The polynomial $h^*(P;z)=\sum_{i=0}^n h^*_iz^i$ is known as the \emph{$h^*$-polynomial} of~$P$ and is of degree less than $n+1$ and has nonnegative integer coefficients.
The coefficients of $h^*(P;z)$ form the $h^*$-vector of $P$.

In Theorem \ref{thm:gor} we characterized the Gorenstein matching polytopes, and in Corollary \ref{cor:gorenstein_idp} we saw that if $P_M(G)$ is Gorenstein, then it has the IDP. 
Thus, invoking Adiprasito, Papadakis, Petrotou, and Steinmeyer's result (Theorem \ref{AdiprasitoPapadakisPetrotouSteinmeyer}), we obtain the following corollary: 

\begin{corollary}
All Gorenstein matching polytopes have unimodal $h^*$-vectors.    
\end{corollary}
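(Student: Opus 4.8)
The plan is to obtain the statement by stringing together the two results already established with the Adiprasito--Papadakis--Petrotou--Steinmeyer theorem. The only preliminary issue is that Corollary \ref{cor:gorenstein_idp} is phrased for connected graphs, so I would first reduce to that case. If $G$ has connected components $G_1,\dots,G_k$, then a matching on $G$ is precisely an independent choice of a matching on each $G_i$, whence $P_M(G) = P_M(G_1)\times\cdots\times P_M(G_k)$ as a Cartesian product of lattice polytopes. Two standard facts carry the reduction: the IDP is closed under Cartesian products (a lattice point of $t(P\times Q)$ splits into a lattice point of $tP$ and one of $tQ$, each of which decomposes by the IDP of the respective factor), and Gorensteinness descends to the factors, since the Ehrhart ring of a product is the Segre product of the Ehrhart rings and a Segre product can only be Gorenstein when each factor is. Hence if $P_M(G)$ is Gorenstein, each $P_M(G_i)$ is Gorenstein as well.

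Granting the reduction, let $G$ be connected with $P_M(G)$ Gorenstein. By Corollary \ref{cor:gorenstein_idp}, $P_M(G)$ has the IDP. Thus $P_M(G)$ is a Gorenstein lattice polytope possessing the IDP, and Theorem \ref{AdiprasitoPapadakisPetrotouSteinmeyer} applies verbatim to give that $h^*(P_M(G);z)$ is unimodal. For a general (possibly disconnected) $G$ with $P_M(G)$ Gorenstein, one applies the same conclusion: each factor $P_M(G_i)$ is Gorenstein and IDP by the connected case, so the product $P_M(G)$ is again Gorenstein and IDP, and Theorem \ref{AdiprasitoPapadakisPetrotouSteinmeyer} applies directly to $P_M(G)$ itself.

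I do not expect any genuine obstacle here, since all the substance lives in Theorem \ref{thm:gor}, Corollary \ref{cor:gorenstein_idp}, and the cited unimodality theorem; the present corollary is a bookkeeping consequence. The only step that warrants a sentence of care is the reduction to connected graphs, and even there both ingredients --- closure of the IDP under Cartesian products and descent of the Gorenstein property across a Segre product --- are routine. If one wished to avoid invoking the Segre-product descent, one could instead note that a Gorenstein $P_M(G)$ has every component of type (a), (b), or (c) in the sense of Theorem \ref{thm:gor}, so each component separately has the IDP by Corollary \ref{cor:gorenstein_idp}, reaching the same conclusion.
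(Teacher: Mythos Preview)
Your proposal is correct and follows essentially the same route as the paper, which simply invokes Corollary~\ref{cor:gorenstein_idp} together with Theorem~\ref{AdiprasitoPapadakisPetrotouSteinmeyer}. Your added reduction to the connected case is a point the paper glosses over; one small caveat is that your final ``alternative'' paragraph is circular, since Theorem~\ref{thm:gor} is itself stated only for connected $G$ and so cannot classify the components without first knowing each $P_M(G_i)$ is Gorenstein---but your main argument via the Cartesian product and Segre-product descent is sound.
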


But what about Schepers and Van Langenhoven's question:
If $P$ has the IDP, is it true that $h^*(P;z)$ is unimodal? 

We saw in Section \ref{sec:wheel_graphs_IDP} that the matching polytopes of wheel graphs are not Gorenstein, but still exhibit the IDP. 
While we do not answer Schepers and Van Langenhoven's question here, our work presents directions for fruitful exploration.  
In support of an affirmative answer to their question for matching polytopes of wheel graphs, we present Table \ref{tab:two-column}, which demonstrates that the $h^*$-vector of $P_M(W_n)$ is unimodal for $n=4,\dots, 10$. 

\begin{table}[ht]
    \centering
    \begin{tabular}{|c|c|}
        \hline
        {${n}$} & {$h^*(P_M(W_n))$} \\
        \hline
        \hline
        4 & [1, 3, 3, 1]
 \\
        \hline
        5 & [1, 10, 29, 26, 5]
 \\
        \hline
        6 & [1, 25, 170, 386, 285, 57]
 \\
        \hline
        7 & [1, 53, 714, 3249, 5420, 3262, 567]
 \\
         \hline
        8 & [1, 105, 2590, 21260, 67235, 86451, 43218, 6604]
 \\
        \hline
        9 & [1, 198, 8403, 115516, 624069, 1456544, 1499583, 639382, 87671]
 \\
        \hline
        10 & [1, 363, 25494, 557752, 4805292, 18458122, 33400578, 28421692, 10537374, 1310974]
 \\
        \hline
    \end{tabular}
    \caption{$h^*$-vectors of $P_M(W_n)$ for $n=4, \dots, 10$.}
    \label{tab:two-column}
\end{table}

And this leads us to conjecture the following.

\begin{conjecture}
The $h^*$-vector of the matching polytope of the wheel graph $W_n$, for $n\geq 4$, is unimodal.  
\end{conjecture}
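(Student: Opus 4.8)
The plan is to turn the conjecture into a concrete analysis of one family of polynomials by exploiting the cyclic structure of $W_n$. A $t$-matching on $W_n$ is recorded by the weights on the $n-1$ spokes and the $n-1$ outer edges of $\Delta$, and Proposition~\ref{prop:inequality} (via the classification of odd structures as slices) shows that the constraints couple only consecutive spoke/outer-edge pairs around the cycle, apart from the single global constraint at the hub $v$ and, when $n$ is even, the constraint coming from $\Delta$ itself. Consequently $\lvert tP_M(W_n)\cap\Z^{E}\rvert$ can be written as the trace of the $(n-1)$-st power of a transfer matrix $T(t)$ whose entries are quasi-polynomials in $t$, further refined by a marked variable tracking the total hub weight (so that the hub constraint is imposed afterwards) and, for even $n$, a marked variable for $\Delta$. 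Summing the resulting geometric series over $t$ gives $\operatorname{Ehr}(P_M(W_n);z)$ in closed rational form, hence a formula for $h^*(P_M(W_n);z)$ as a $\Z_{\ge 0}$-combination of contributions indexed by the eigenvalues of a polynomial matrix $M(z)$. A welcome byproduct of this computation should be a clean proof that $\deg h^*(P_M(W_n);z)=n-1$, which matches the data in Table~\ref{tab:two-column}.

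With such a formula in hand I would attack unimodality — ideally in the stronger form of real-rootedness, which the data support (the listed $h^*$-vectors are log-concave with no internal zeros for $n=4,\dots,10$). I see three routes. First, show that the relevant transfer/eigenvalue data is totally nonnegative, so that a P\'olya--Schur / multiplier-sequence argument forces the numerator to have only real nonpositive roots. Second, read off from the transfer matrix a combinatorial model for the $h^*$-coefficients — typically colored Motzkin-type or Dyck-type lattice paths around the cycle — and then invoke, or adapt, a known unimodality or real-rootedness result for that model. Third, and most self-contained, set up an interlacing-polynomials recursion directly on $h^*(P_M(W_n);z)$ as $n$ grows, using the splitting machinery of Section~\ref{sec:wheel_graphs_IDP} to relate $t$-matchings on $W_{n+1}$ to $t$-matchings on $W_n$ with a marked arc; if the recursion has a ``compatible'' shape in the sense of Stanley--Brenti, real-rootedness (hence log-concavity, hence unimodality) propagates from the base cases.

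The main obstacle is that none of these steps is automatic. The transfer matrix guarantees a rational generating function but not real roots: total nonnegativity of $M(z)$ must be verified by hand and may simply fail, and guessing the correct path statistic is an art rather than a recipe. Moreover, because $P_M(W_n)$ is \emph{not} Gorenstein for $n\ge 5$, its $h^*$-polynomial is not palindromic, so the well-developed toolkit for \emph{symmetric} unimodal sequences — $\gamma$-positivity, the $g$-theorem analogue for simplicial spheres, boundary-complex arguments — does not apply directly, and one is pushed into the comparatively underdeveloped territory of skewed unimodal sequences. A reasonable fallback, already of independent interest, is to prove only that $h^*(P_M(W_n);z)$ is log-concave with no internal zeros, for which a Lefschetz-type argument on the toric variety of $P_M(W_n)$ — leveraging the IDP property established in this section — may be more tractable than full real-rootedness.
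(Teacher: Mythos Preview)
The statement you are addressing is a \emph{conjecture} in the paper, not a theorem: the authors offer no proof and support it only with the computed $h^*$-vectors for $n=4,\dots,10$ in Table~\ref{tab:two-column}. So there is no paper proof to compare against, and any ``proof proposal'' here is necessarily a research plan rather than a verification task.

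Read on its own terms, what you have written is an honest and well-informed outline of possible attacks, and you are candid that none of the three routes is guaranteed to work. That candor is correct: the transfer-matrix formalism will indeed produce a rational $\operatorname{Ehr}(P_M(W_n);z)$, but nothing in that construction forces real-rootedness or even unimodality of the numerator, and the interlacing recursion you sketch would require a two-term or bounded-term recurrence in $n$ with nonnegative polynomial coefficients of the right shape, which is exactly the part that has to be \emph{discovered}, not assumed. The Lefschetz fallback is also speculative: IDP alone does not give a hard Lefschetz property for the associated toric variety (the known results along these lines, e.g.\ those underlying Theorem~\ref{AdiprasitoPapadakisPetrotouSteinmeyer}, require Gorensteinness or a related symmetry that $P_M(W_n)$ lacks for $n\ge 5$), so invoking it would itself be a substantial new theorem. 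In short, your proposal correctly identifies the landscape but does not close any gap the paper leaves open; it should be presented as a program toward the conjecture, not as a proof.
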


\section*{Acknowledgements \& Funding}
The authors thank Akihiro Higashitani for helpful comments on the paper and for funding Matsushita's visit to UC Berkeley in the Fall of 2023.
The authors also thank Alex Bouquet, Carson Mitchell, and Jupiter Davis, for their interest and ideas at the start of this project. 
We extend a special thank you to J. Carlos Mart\'inez Mori who introduced us to matching polytopes in the first place.

Vindas-Mel\'endez was partially supported by the National Science Foundation under Award DMS-2102921.
This material is also based in part upon work supported by the NSF Grant DMS-1928930 and the Alfred P. Sloan Foundation under grant G-2021-16778, while ARVM was in residence at the Simons Laufer Mathematical Sciences Institute in Berkeley, California, during the Fall 2023 semester.
Matsushita is partially supported by Grant-in-Aid for JSPS Fellows Grant JP22J20033.
The authors also thank the UC Berkeley Mathematics Department for funding their computational resources.

\bibliographystyle{amsplain}
\bibliography{bib.bib}

\providecommand{\bysame}{\leavevmode\hbox to3em{\hrulefill}\thinspace}
\providecommand{\MR}{\relax\ifhmode\unskip\space\fi MR }
\providecommand{\MRhref}[2]{%
  \href{http://www.ams.org/mathscinet-getitem?mr=#1}{#2}
}
\providecommand{\href}[2]{#2}
\begin{thebibliography}{10}

\bibitem{AdiprasitoPapadakisPetrotouSteinmeyer}
Karim~Alexander Adiprasito, Stavros~Argyrios Papadakis, Vasiliki Petrotou, and
  Johanna~Kristina Steinmeyer, \emph{Beyond positivity in {E}hrhart theory},
  preprint (\href{https://arxiv.org/abs/2210.10734}{arXiv:2210.10734}).

\bibitem{BeckRobins}
Matthias Beck and Sinai Robins, \emph{Computing the continuous discretely},
  second ed., Undergraduate Texts in Mathematics, Springer, New York, 2015,
  Integer-point enumeration in polyhedra, With illustrations by David Austin.

\bibitem{BeckSanyal}
Matthias Beck and Raman Sanyal, \emph{Combinatorial reciprocity theorems},
  Graduate Studies in Mathematics, vol. 195, American Mathematical Society,
  Providence, RI, 2018, An invitation to enumerative geometric combinatorics.

\bibitem{MAGMA}
Wieb Bosma, John Cannon, and Catherine Playoust, \emph{The {M}agma algebra
  system. {I}. {T}he user language}, J. Symbolic Comput. \textbf{24} (1997),
  no.~3-4, 235--265, Computational algebra and number theory (London, 1993).

\bibitem{Braun}
Benjamin Braun, \emph{Unimodality problems in {E}hrhart theory}, Recent trends
  in combinatorics, IMA Vol. Math. Appl., vol. 159, Springer, [Cham], 2016,
  pp.~687--711.

\bibitem{BrunsGubeladze}
Winfried Bruns and Joseph Gubeladze, \emph{Polytopes, rings, and {$K$}-theory},
  Springer Monographs in Mathematics, Springer, Dordrecht, 2009.

\bibitem{BrunsHerzog}
Winfried Bruns and J\"{u}rgen Herzog, \emph{Cohen-{M}acaulay rings}, Cambridge
  Studies in Advanced Mathematics, vol.~39, Cambridge University Press,
  Cambridge, 1993.

\bibitem{chvatal1975stable}
V.~Chv\'{a}tal, \emph{On certain polytopes associated with graphs}, J.
  Combinatorial Theory Ser. B \textbf{18} (1975), 138--154.

\bibitem{DasMukherjee}
Priya Das and Himadri Mukherjee, \emph{The first syzygy of {H}ibi rings
  associated with planar distributive lattices}, Acta Math. Vietnam.
  \textbf{47} (2022), no.~3, 689--707.

\bibitem{DeNegriHibi}
Emanuela De~Negri and Takayuki Hibi, \emph{Gorenstein algebras of {V}eronese
  type}, J. Algebra \textbf{193} (1997), no.~2, 629--639.

\bibitem{Diestel}
Reinhard Diestel, \emph{Graph theory}, fifth ed., Graduate Texts in
  Mathematics, vol. 173, Springer, Berlin, 2018.

\bibitem{Ehrhart}
Eug\`ene Ehrhart, \emph{Sur les poly\`edres rationnels homoth\'{e}tiques \`a
  {$n$}\ dimensions}, C. R. Acad. Sci. Paris \textbf{254} (1962), 616--618.

\bibitem{Ene}
Viviana Ene, \emph{Syzygies of {H}ibi rings}, Acta Math. Vietnam. \textbf{40}
  (2015), no.~3, 403--446.

\bibitem{EneHerzogSaeediMadani}
Viviana Ene, J\"{u}rgen Herzog, and Sara Saeedi~Madani, \emph{A note on the
  regularity of {H}ibi rings}, Manuscripta Math. \textbf{148} (2015), no.~3-4,
  501--506.

\bibitem{engstrom2013ideals}
Alexander Engstr{\"o}m and Patrik Nor{\'e}n, \emph{Ideals of graph
  homomorphisms}, Annals of Combinatorics \textbf{17} (2013), 71--103.

\bibitem{HamanoHibiOhsugi}
Ginji Hamano, Takayuki Hibi, and Hidefumi Ohsugi, \emph{Ehrhart series of
  fractional stable set polytopes of finite graphs}, Ann. Comb. \textbf{22}
  (2018), no.~3, 563--573.

\bibitem{Hibi}
Takayuki Hibi, \emph{Distributive lattices, affine semigroup rings and algebras
  with straightening laws}, Commutative algebra and combinatorics ({K}yoto,
  1985), Adv. Stud. Pure Math., vol.~11, North-Holland, Amsterdam, 1987,
  pp.~93--109.

\bibitem{HigashitaniMatsushita}
Akihiro Higashitani and Koji Matsushita, \emph{Three families of toric rings
  arising from posets or graphs with small class groups}, J. Pure Appl. Algebra
  \textbf{226} (2022), no.~10, Paper No. 107079, 24.

\bibitem{HigashitaniNakajima}
Akihiro Higashitani and Yusuke Nakajima, \emph{Generalized {$F$}-signatures of
  {H}ibi rings}, Illinois J. Math. \textbf{65} (2021), no.~1, 97--120.

\bibitem{maffray1992kernels}
Fr{\'e}d{\'e}ric Maffray, \emph{Kernels in perfect line-graphs}, Journal of
  Combinatorial Theory, Series B \textbf{55} (1992), no.~1, 1--8.

\bibitem{Miyazaki}
Mitsuhiro Miyazaki, \emph{Doset {H}ibi rings with an application to invariant
  theory}, Comm. Algebra \textbf{46} (2018), no.~1, 335--355.

\bibitem{OhsugiHibi}
H.~Ohsugi and T.~Hibi, \emph{A normal {$(0,1)$}-polytope none of whose regular
  triangulations is unimodular}, Discrete Comput. Geom. \textbf{21} (1999),
  no.~2, 201--204.

\bibitem{ohsugi2001compressed}
Hidefumi Ohsugi and Takayuki Hibi, \emph{Convex polytopes all of whose reverse
  lexicographic initial ideals are squarefree}, Proc. Amer. Math. Soc.
  \textbf{129} (2001), no.~9, 2541--2546.

\bibitem{OhsugiHibi2}
\bysame, \emph{Special simplices and {G}orenstein toric rings}, J. Combin.
  Theory Ser. A \textbf{113} (2006), no.~4, 718--725.

\bibitem{PulleyblankEdmonds}
William Pulleyblank and Jack Edmonds, \emph{Facets of {$1$}-matching
  polyhedra}, Hypergraph {S}eminar ({P}roc. {F}irst {W}orking {S}em., {O}hio
  {S}tate {U}niv., {C}olumbus, {O}hio, 1972; dedicated to {A}rnold {R}oss),
  Lecture Notes in Math., vol. Vol. 411, Springer, Berlin-New York, 1974,
  pp.~214--242.

\bibitem{SchepersVanLangenhoven}
Jan Schepers and Leen Van~Langenhoven, \emph{Unimodality questions for
  integrally closed lattice polytopes}, Ann. Comb. \textbf{17} (2013), no.~3,
  571--589.

\bibitem{SimisVasconcelosVillarreal}
Aron Simis, Wolmer~V. Vasconcelos, and Rafael~H. Villarreal, \emph{The integral
  closure of subrings associated to graphs}, J. Algebra \textbf{199} (1998),
  no.~1, 281--289.

\bibitem{MR0457293}
L.~E. Trotter, Jr., \emph{Line perfect graphs}, Math. Programming \textbf{12}
  (1977), no.~2, 255--259. \MR{457293}

\end{thebibliography}

\end{document}